\documentclass[11pt]{amsart}
\usepackage[margin=1.08in]{geometry}
\usepackage{amsmath,amssymb,amsthm,amscd,mathtools}
\usepackage{enumitem}
\usepackage{xcolor}
\usepackage{tikz}
\usetikzlibrary{arrows.meta,calc,decorations.pathmorphing,positioning}
\usepackage{microtype}
\usepackage[pdfencoding=auto,psdextra,bookmarksnumbered=true,breaklinks=true]{hyperref}
\definecolor{braidred}{RGB}{230,45,55}
\definecolor{braidblue}{RGB}{20,105,220}
\definecolor{braidgreen}{RGB}{30,170,85}
\definecolor{braidturquoise}{RGB}{0,175,190}
\definecolor{braidpurple}{RGB}{145,70,200}
\hypersetup{
  colorlinks=true,
  linkcolor=braidblue!75!black,
  citecolor=braidpurple!80!black,
  urlcolor=braidturquoise!70!black,
  pdfauthor={Anar Akhmedov},
  pdftitle={Braided Multisections and Symplectic Four-Manifolds with the Rational Cohomology of S2 x S2},
  pdfsubject={Symplectic four-manifolds, braided multisections, and mixed fiber sums}
}
\newtheorem{theorem}{Theorem}[section]
\newtheorem{proposition}[theorem]{Proposition}
\newtheorem{lemma}[theorem]{Lemma}
\newtheorem{corollary}[theorem]{Corollary}
\newtheorem{question}[theorem]{Question}
\theoremstyle{definition}

\newtheorem{remark}[theorem]{Remark}

\newcommand{\Int}{\operatorname{int}}

\title[Braided multisections and rational cohomology $S^2\times S^2$'s]
{Braided Multisections and Symplectic Four-Manifolds\\
with the Rational Cohomology of $S^2\times S^2$}

\author{Anar Akhmedov}\date{}

\begin{document}

\begin{abstract}
We construct symplectic four-manifolds by taking mixed fiber sums along
explicit cyclic multisections in ruled surfaces. For a connected unbranched
degree-$p$ multisection in $\Sigma_g\times S^2$, we determine the first
homology and fundamental group of the complement and prove that its boundary
is incompressible. It follows that no direct gluing of two such complements
can be simply connected; moreover, the first homology of every direct sum
retains finite quotients determined by the covering degrees.

We classify the mixed sums having Euler characteristic $4$ and signature
$0$. Up to interchanging the two summands, exactly three possibilities occur,
corresponding to the degree pairs $(2,3)$, $(2,4)$, and $(3,3)$. For each of
these cases, suitable adapted product-framed symplectic gluings have the
rational cohomology ring of $S^2\times S^2$. Varying the gluing by symplectic
transvections produces infinitely many pairwise nondiffeomorphic examples,
distinguished by the unbounded orders of their finite first homology groups.

We also construct the twisted ruled analogue of the $(2,4)$ case.
Explicit finite-holonomy multisections give connected square-zero symplectic
surfaces in the classes $2S_3-F_3$ and $4S_2-2F_2$ in the nontrivial
$S^2$-bundles over $\Sigma_3$ and $\Sigma_2$, respectively.  More generally,
for a square-zero degree-$p$ multisection in the nontrivial bundle the
complement has first homology
$\mathbb Z^{2g}\oplus\mathbb Z/(p/2)$.  Suitable gluings in the twisted
$(2,4)$ case have $b_1=0$, $b_2=2$, and signature zero, and every such
sum is non-spin.  Hence they have the rational cohomology ring of
$\mathbb CP^2\#\overline{\mathbb CP}^{\,2}$.
We compare these constructions with the author's 2006 construction of minimal
symplectic four-manifolds having the integral cohomology ring of
$S^2\times S^2$, obtained via knot surgery and twisted fiber sums.
\end{abstract}

\maketitle

\section{Introduction}

The starting point of this paper is a simple observation about braided
surfaces in ruled four-manifolds.  Smith \cite{Smith} showed that, for $m\ge2$, the class $2m[\Sigma_g\times\{\mathrm{pt}\}]$ in $\Sigma_g\times S^2$ admits infinitely many pairwise non-isotopic connected symplectic representatives, while Hays--Park \cite{HaysPark} constructed infinite families of homotopic but pairwise non-isotopic symplectic surfaces in the product four-manifolds $\Sigma_g\times\Sigma_h$.  For the questions considered here, one can work
with a particularly elementary cyclic model.  We use these cyclic
multisections as the gluing surfaces in mixed symplectic fiber sums.

The relevant numerical feature is already visible in the genera of the multisections.  A connected
unbranched degree-$p$ multisection of
\[
 \Sigma_g\times S^2\longrightarrow\Sigma_g
\]
has genus $p(g-1)+1$.  Thus a degree-two multisection over $\Sigma_4$ and
a degree-three multisection over $\Sigma_3$ both have genus seven.  We
construct symplectic surfaces
\[
 F_2\subset\Sigma_4\times S^2,\qquad
 F_3\subset\Sigma_3\times S^2
\]
with
\[
 [F_2]=2[\Sigma_4\times\{\mathrm{pt}\}],\qquad
 [F_3]=3[\Sigma_3\times\{\mathrm{pt}\}],\qquad
 F_2^2=F_3^2=0.
\]
For an orientation-reversing identification of their normal-circle
boundaries we form
\[
 Z_\phi=
 (\Sigma_4\times S^2\setminus\operatorname{int}\nu F_2)
 \cup_\phi
 (\Sigma_3\times S^2\setminus\operatorname{int}\nu F_3).
\]
Every such sum satisfies
\[
 e(Z_\phi)=4,\qquad \sigma(Z_\phi)=0.
\]

\begin{theorem}\label{thm:main}
Up to interchanging the two ruled-product summands, the direct cyclic
mixed construction has the numerical invariants of $S^2\times S^2$ in
exactly three cases:
\[
 (p,q;g,h;G)=(2,3;4,3;7),\quad
 (2,4;3,2;5),\quad
 (3,3;2,2;4).
\]
In each case there are infinitely many adapted product-framed symplectic
gluings for which
\[
 H^*(Z;\mathbb Q)\cong H^*(S^2\times S^2;\mathbb Q)
\]
as graded rings, with finite first-homology groups of unbounded order.
Consequently the construction yields infinitely many pairwise
nondiffeomorphic symplectic rational cohomology
$S^2\times S^2$'s.  For every direct gluing, however, $\pi_1(Z)$ is
nontrivial and $H_1(Z;\mathbb Z)$ has quotients $\mathbb Z/p$ and
$\mathbb Z/q$.  In particular none of these direct sums has the integral
homology of $S^2\times S^2$.
\end{theorem}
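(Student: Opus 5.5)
The proof splits into three parts: the numerical classification, the rational cohomology of suitable gluings, and the obstructions that hold for every direct gluing.

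\emph{Numerical classification.} Write $X_1=\Sigma_g\times S^2$ and $X_2=\Sigma_h\times S^2$. The gluing surfaces are connected unbranched multisections of degrees $p$ and $q$, of common genus $G$. The genus formula from the introduction gives
\[
G-1=p(g-1)=q(h-1)=:n .
\]
The Euler characteristic of a fiber sum is $e(Z)=e(X_1)+e(X_2)-2e(F)$. Substituting $e(X_i)=-4(g-1)$, $-4(h-1)$ and $e(F)=-2n$ gives
\[
e(Z)=-4\tfrac np-4\tfrac nq+4n .
\]
So $e(Z)=4$ is equivalent to $n\bigl(1-\tfrac1p-\tfrac1q\bigr)=1$, that is, $n=pq/(pq-p-q)$.

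The signature needs no condition. Both pieces have signature $0$, and Novikov additivity across the $3$-manifold $F\times S^1$ gives $\sigma(Z)=0$ automatically.

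We need $n\ge 2$. Indeed $n=1$ forces $pq-p-q=pq$, which is impossible, and $g,h\ge 2$ gives $n\ge p,q\ge2$. Then $pq\ge 2(pq-p-q)$, i.e. $(p-2)(q-2)\le 4$, and $(2,2)$ is excluded since it makes $1-\frac1p-\frac1q=0$. Assuming $p\le q$, this leaves finitely many pairs:
\begin{itemize}
\item $(2,q)$ with $n=2q/(q-2)$ integral, so $q-2\mid 4$ and $q\in\{3,4,6\}$;
\item $(3,3)$, $(3,4)$ and $(4,4)$.
\end{itemize}
We then check integrality of $n$, of $g-1=n/p$ and of $h-1=n/q$. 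The survivors are exactly $(2,3)$ with $n=6$, $(2,4)$ with $n=4$, and $(3,3)$ with $n=3$. The case $(2,6)$ has $n=3$, so $n/2\notin\mathbb Z$; the case $(4,4)$ has $n=2$, so $n/4\notin\mathbb Z$; the case $(3,4)$ has $n=12/5$. These three pairs give the listed values of $(g,h,G)$.

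\emph{Rational cohomology for suitable gluings.} Symplecticity comes from Gompf's fiber sum: the surfaces have square zero, and the orientation-reversing gluing map is adapted and product-framed.

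For rational cohomology I use the earlier computation $H_1(X_i\setminus\nu F)\cong\mathbb Z^{2g}\oplus\mathbb Z/p$ (resp.\ $\mathbb Z^{2h}\oplus\mathbb Z/q$) and run Mayer--Vietoris over $F\times S^1$. The goal is to choose $\phi$ on $H_1(F)$ so that the images of $H_1(F\times S^1;\mathbb Q)$ in the two complements together span $\mathbb Q^{2g}\oplus\mathbb Q^{2h}$. Since $2G\ge 2g+2h$ in all three cases, this is a rank condition on an explicit integer matrix, and gives $b_1(Z)=0$. Then $b_3=0$ by Poincar\'e duality, and $e=4$ gives $b_2=2$.

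The rational intersection form is a nondegenerate binary form of signature $0$. It therefore has an isotropic vector, and a nondegenerate isotropic binary form over $\mathbb Q$ is hyperbolic. Hence the graded ring is that of $S^2\times S^2$.

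To get infinitely many examples, I compose $\phi$ with powers $\tau^k$ of a symplectic transvection in $\mathrm{Sp}(H_1(F))$. This changes one entry of the matrix linearly in $k$, so the determinant, which is $|H_1(Z)|$ up to the bounded torsion contributions, grows without bound. Distinct orders of $H_1$ give pairwise nondiffeomorphic manifolds.

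\emph{Obstructions for every direct gluing.} Since the boundary $F\times S^1$ is incompressible in both complements (proved earlier), $\pi_1(Z)$ is an amalgamated product along injective subgroups. It is nontrivial because each complement's $\pi_1$ is strictly larger than the image of $\pi_1(F\times S^1)$.

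For the quotients, map $H_1(Z)\to\mathbb Z/p$ using the $\mathbb Z/p$ summand of $H_1(X_1\setminus\nu F_2)$, which comes from the covering degree (the monodromy of the $p$-fold cover). The key point is that the image of $H_1(\partial)$ lies in the kernel of this map, and so does all of $H_1(X_2\setminus\nu F)$; the latter is routed through the meridian/fiber identification. Then the map extends over $Z$, and similarly for $\mathbb Z/q$. These finite quotients of order $p,q\ge2$ rule out integral homology of $S^2\times S^2$.

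I expect the main obstacle to be this last step: showing that the torsion projection kills the boundary image \emph{for every} $\phi$, rather than only for adapted product-framed ones. This needs care about where $\phi$ sends the meridian circle.
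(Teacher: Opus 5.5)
Your classification and your $\pi_1$ argument agree with the paper, but the construction of the quotients $\mathbb Z/p$ and $\mathbb Z/q$ fails. You project onto the meridional torsion summand of $H_1(X_1\setminus\nu F)\cong\mathbb Z^{2g}\oplus\mathbb Z/p$ and send the other complement to zero. However, the normal circle $m\in H_1(\partial\nu F)$ maps to the meridian $\mu$, and $\mu$ generates exactly that summand (the summand is $\langle\mu\rangle$ with $p\mu=0$). So the Mayer--Vietoris image of $m$ goes to $1\neq0$, and your map does not descend to $H_1(Z)$. A compensating map on the second complement cannot rescue it, because the meridian there has order $q$. For product-framed gluings the meridional parts contribute only $(\mathbb Z/p\oplus\mathbb Z/q)/\langle(1,-1)\rangle\cong\mathbb Z/\gcd(p,q)$ (Proposition~\ref{prop:productH1}), which vanishes in the $(2,3)$ case. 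The quotients come instead from the covering index in \emph{base} homology, $H_1(\Sigma_g)/q_{p*}H_1(F)\cong\mathbb Z/p$ (Lemma~\ref{lem:image-cover}). Use the composite $H_1(X_1\setminus\nu F)\to H_1(\Sigma_g)\to\mathbb Z/p$ on the first summand and zero on the second. Every boundary class projects into $q_{p*}H_1(F)$ whatever the framing, and the meridian projects to $0$ (Lemma~\ref{lem:boundaryH1}). So the whole boundary image dies and $\phi$ never enters; the obstacle you anticipated about where $\phi$ sends the meridian does not arise.

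The rational part has three further gaps.

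First, $2G=2g+2h$ is only a dimension count. You must produce an integral symplectic $T$, realized by a mapping class, with $\ker q_{p*}\cap T^{-1}\ker q_{q*}=0$. The paper shows the kernels are symplectic (Lemma~\ref{lem:symplectic-kernel}) and puts them in general position (Lemma~\ref{lem:symp-general-position}). It then passes to integral matrices by Zariski density of $\operatorname{Sp}(2G,\mathbb Z)$ and $\operatorname{Mod}(\Sigma_G)\twoheadrightarrow\operatorname{Sp}(2G,\mathbb Z)$.

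Second, let $\Delta(T)$ be the determinant of $x\mapsto(q_{p*}x,-q_{q*}Tx)$. Composing with $\tau_v^k$ is a rank-one perturbation, not a single-entry change, so $\Delta(\tau_v^kT_0)=ak+b$, but nothing you say gives $a\neq0$. In fact $a=0$ whenever $v\in\ker q_{q*}$, and $a=0$ for every $v$ if $T_0(\ker q_{p*})=(\ker q_{q*})^{\perp}$. The paper forces $a\neq0$ as follows: it joins some $T_-$ with $\Delta(T_-)=0$ to some $T_+$ with $\Delta(T_+)\neq0$ through a chain of transvections, then picks a step where $\Delta$ changes.

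Third, a nondegenerate rational binary form of signature $0$ need not be isotropic; $x^2-2y^2$ is not. You need unimodularity of the integral form on $H_2(Z;\mathbb Z)/\operatorname{Tor}$. This forces $H$ or $\langle1\rangle\oplus\langle-1\rangle$, both hyperbolic over $\mathbb Q$.

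(Minor: for $p=3$ your bound $(p-2)(q-2)\le4$ also allows $q=5,6$, but both fail integrality.)
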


The first question is whether one can make
$Z_\phi$ simply connected.  The answer is no: the boundary of the
complement of any connected unbranched multisection of degree at least
two is $\pi_1$-injective, so the two complement groups inject into the
amalgamated product $\pi_1(Z_\phi)$.

The situation for first homology is different.  Although the fundamental
group cannot be killed by the gluing, its free abelian part can be killed.
We compute
\[
 H_1(A;\mathbb Z)\cong\mathbb Z^8\oplus\mathbb Z/2,\qquad
 H_1(B;\mathbb Z)\cong\mathbb Z^6\oplus\mathbb Z/3
\]
for the two complements $A$ and $B$.  We also determine the image of
the boundary homology.  For every gluing, $H_1(Z_\phi;\mathbb Z)$ has
a quotient $\mathbb Z/2$ and a quotient $\mathbb Z/3$, so no direct sum
has the integral homology of $S^2\times S^2$.  On the other hand, by
choosing the genus-seven mapping class appropriately, the Mayer--Vietoris
map is an isomorphism over $\mathbb Q$ onto
\[
 H_1(A;\mathbb Q)\oplus H_1(B;\mathbb Q).
\]
Consequently $H_1(Z_\phi;\mathbb Q)=0$.  Together with
$e=4$ and $\sigma=0$, this gives the rational cohomology ring of
$S^2\times S^2$.

This also sits alongside earlier constructions of small exotic
four-manifolds obtained by combining symplectic sums, Luttinger surgeries
\cite{Luttinger}, and carefully chosen gluing maps
\cite{AkhmedovParkSmall,AkhmedovSmall,AkhmedovParkOdd}.  In those
constructions, Luttinger surgery is particularly useful for modifying the
fundamental group while retaining control of the symplectic structure.
The direct cyclic sums considered here behave differently: the
finite-index defect in the boundary homology survives every choice of
gluing map.

We compare this with the author's 2006 construction
\cite{AkhmedovCohomology} of minimal symplectic four-manifolds having the
\emph{integral} cohomology ring of $S^2\times S^2$, obtained via knot
surgery and twisted fiber sums.  This comparison shows that the finite quotients found here belong to the
direct unbranched multisection sum; they are not a general obstruction to
symplectic constructions with the cohomology of $S^2\times S^2$.  This
distinction is discussed again in Section~\ref{sec:comparison-akhmedov}.

The $(2,3)$ example is one of exactly three numerical possibilities in
the general mixed $(p,q)$ construction.  We classify the $e=4$ cases,
compute the general complement homology, and prove a rational
Mayer--Vietoris criterion.  The other two cases have degrees $(2,4)$ and
$(3,3)$ and gluing genera $5$ and $4$, respectively.

\section{Cyclic braided multisections}\label{sec:cyclic}

We construct the two surfaces needed here directly, without using a
low-degree specialization of the more elaborate non-isotopy constructions
of \cite{Smith,HaysPark}.

Let $S^1_{\mathrm{eq}}\subset S^2$ be a fixed equator.  Identify
$S^1_{\mathrm{eq}}$ with the unit complex numbers.

\subsection{A general construction}

Let $g\geq1$ and choose a primitive class
\[
 \xi\in H^1(\Sigma_g;\mathbb Z).
\]
Choose a smooth map
\[
 u:\Sigma_g\longrightarrow S^1
\]
representing $\xi$.  For an integer $p\geq2$, define
\begin{equation}\label{eq:Fgu}
 F_{g,p}(u)=
 \{(x,z)\in\Sigma_g\times S^1_{\mathrm{eq}}:\ z^p=u(x)\}.
\end{equation}

\begin{proposition}\label{prop:cyclicsurface}
The set $F_{g,p}(u)$ is a connected embedded symplectic surface in
$\Sigma_g\times S^2$.  The projection to the first factor is an unbranched
$p$-fold covering, and
\[
 [F_{g,p}(u)]=p[\Sigma_g\times\{\mathrm{pt}\}],\qquad
 F_{g,p}(u)^2=0,
\]
while
\[
 g(F_{g,p}(u))=p(g-1)+1.
\]
\end{proposition}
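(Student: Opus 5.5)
The plan is to treat $F=F_{g,p}(u)$ as the pullback of the $p$-fold cyclic cover $z\mapsto z^p$ of $S^1$ along $u$. I will read off embeddedness, the covering property and connectedness from this description, and then handle the symplectic condition, the homology class and the genus separately.

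\emph{Embeddedness and covering.} Consider the map
\[
\Phi:\Sigma_g\times S^1_{\mathrm{eq}}\to S^1,\qquad \Phi(x,z)=z^p\,\overline{u(x)} .
\]
Its partial derivative in the $z$-direction never vanishes, so $1$ is a regular value and $F=\Phi^{-1}(1)$ is a smooth closed surface. Since $F$ lies in the hypersurface $\Sigma_g\times S^1_{\mathrm{eq}}$, it is embedded in $\Sigma_g\times S^2$. Over each $x$ there are exactly $p$ solutions $z$. By the implicit function theorem the projection $F\to\Sigma_g$ is a local diffeomorphism, and since $F$ is compact it is an unbranched $p$-fold covering.

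\emph{Connectedness.} The covering $F\to\Sigma_g$ is the pullback under $u$ of the connected cyclic cover $S^1\to S^1$, $z\mapsto z^p$. Its monodromy is therefore $\pi_1(\Sigma_g)\to H_1(\Sigma_g)\xrightarrow{\ \xi\ }\mathbb Z\to\mathbb Z/p$. Because $\xi$ is primitive, there is a loop $\gamma$ with $\xi(\gamma)=1$. Its monodromy generates $\mathbb Z/p$ and acts transitively on fibres, so $F$ is connected.

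\emph{Genus and homology class.} Multiplicativity of Euler characteristic under unbranched covers gives $\chi(F)=p(2-2g)$, so $g(F)=p(g-1)+1$. For the class, I will use that $F$ meets the fibre $\{x\}\times S^2$ transversally in $p$ points with positive orientation (orienting $F$ via the covering). It also misses $\Sigma_g\times\{N\}$, where $N$ is a pole off the equator. Since $H_2(\Sigma_g\times S^2)$ is generated by $[\Sigma_g\times\mathrm{pt}]$ and $[\mathrm{pt}\times S^2]$, and these pair as the hyperbolic form, we get $[F]=p[\Sigma_g\times\{\mathrm{pt}\}]$ and hence $F^2=p^2\cdot 0=0$.

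\emph{Symplectic condition (main obstacle).} Take the product form $\omega=\omega_{\Sigma}\oplus\lambda\,\omega_{S^2}$ with $\lambda>0$. Locally $F$ is the graph of a map $x\mapsto z(x)\in S^1_{\mathrm{eq}}$. The restriction of $\lambda\,\omega_{S^2}$ to the graph is the pullback of $\omega_{S^2}$ along a map into the one-dimensional equator, so it vanishes. Hence $\omega|_F=\pi^*\omega_\Sigma$, which is positive because $\pi|_F$ is an orientation-preserving local diffeomorphism. So $F$ is symplectic for every product form, with no smallness condition on $u$ or $\lambda$. This step is the one to check carefully, since it is what makes the cyclic model elementary. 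The only other subtlety is orientation conventions, where I would fix the orientation of $F$ as the one pulled back from $\Sigma_g$.
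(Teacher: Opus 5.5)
Your proposal is correct and follows essentially the same route as the paper's proof. The covering and connectedness come from viewing $F$ as the pullback of $z\mapsto z^p$ along $u$; the class comes from intersecting with a sphere fiber and with $\Sigma_g\times\{N\}$; the genus comes from multiplicativity of $\chi$; and the symplectic condition comes from the vanishing of the pulled-back $\omega_{S^2}$, which factors through the one-dimensional equator. Your regular-value argument for $\Phi(x,z)=z^p\overline{u(x)}$ only makes explicit the smoothness and embeddedness that the paper reads off directly from the defining equation.
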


\begin{proof}
Consider the degree-$p$ covering
\[
 \rho_p:S^1\longrightarrow S^1,\qquad z\longmapsto z^p.
\]
The surface \eqref{eq:Fgu} is the pullback of $\rho_p$ by $u$:
\[
\begin{array}{ccc}
F_{g,p}(u) & \longrightarrow & S^1\\
{\scriptstyle q}\downarrow && \downarrow{\scriptstyle \rho_p}\\
\Sigma_g & \xrightarrow{\ u\ } & S^1 .
\end{array}
\]
Hence $q$ is an unbranched $p$-fold covering.  Since $\xi$ is primitive,
the induced homomorphism
\[
 u_*:\pi_1(\Sigma_g)\longrightarrow\pi_1(S^1)=\mathbb Z
\]
is onto.  Its reduction modulo $p$ is therefore onto $\mathbb Z/p$, so the
pullback cover is connected.

The defining equation in \eqref{eq:Fgu} also shows directly that
$F_{g,p}(u)$ is embedded in $\Sigma_g\times S^1_{\mathrm{eq}}$, and hence
in $\Sigma_g\times S^2$.

Let
\[
 B=[\Sigma_g\times\{\mathrm{pt}\}],\qquad
 V=[\{\mathrm{pt}\}\times S^2].
\]
Write $[F_{g,p}(u)]=aB+bV$.  A sphere fiber $V$ meets the multisection in
exactly $p$ positively oriented points, so
\[
 [F_{g,p}(u)]\cdot V=p.
\]
Since $B\cdot V=1$ and $V^2=0$, this gives $a=p$.

Now choose the representative of $B$ to be
$\Sigma_g\times\{N\}$, where $N$ is the north pole of $S^2$.  This surface
is disjoint from $F_{g,p}(u)$ because the latter lies in the equator.
Hence
\[
 [F_{g,p}(u)]\cdot B=0.
\]
Since $B^2=0$ and $V\cdot B=1$, this gives $b=0$.  Thus
\[
 [F_{g,p}(u)]=pB
\]
and consequently $F_{g,p}(u)^2=0$.

Riemann--Hurwitz for the unbranched cover $q$ gives
\[
 \chi(F_{g,p}(u))=p\chi(\Sigma_g)=p(2-2g),
\]
which is equivalent to
\[
 g(F_{g,p}(u))=p(g-1)+1.
\]

Finally take a product symplectic form
\[
 \omega=\lambda\,\omega_{\Sigma}+\omega_{S^2},
 \qquad \lambda>0.
\]
The second projection
$F_{g,p}(u)\to S^1_{\mathrm{eq}}\subset S^2$ has one-dimensional image,
so the pullback of the two-form $\omega_{S^2}$ to $F_{g,p}(u)$ is zero.
Therefore
\[
 \omega|_{F_{g,p}(u)}
 =\lambda\,q^*\omega_{\Sigma},
\]
which is a positive area form after orienting $F_{g,p}(u)$ by the covering
$q$.  Hence $F_{g,p}(u)$ is symplectic.
\end{proof}

\begin{remark}
The construction can equivalently be described by a cyclic spherical braid:
when $u(x)$ makes one positive turn around $S^1$, the $p$ roots in the
equator are cyclically permuted.  This is the simplest braid-monodromy model
needed here; Figure~\ref{fig:two-genus-seven} shows the two genus-seven cases used below.
\end{remark}

\begin{figure}[ht]
\centering
\begin{tikzpicture}[x=1cm,y=1cm,>=Latex,line cap=round,line join=round]

% ---------- left panel: F_2 ----------
\begin{scope}[xshift=-5.05cm]
  \node[font=\bfseries] at (0,2.35) {$F_2\subset \Sigma_4\times S^2$};
  \node[font=\small] at (0,1.95) {$F_2\longrightarrow\Sigma_4$ is a connected double cover};

  % stylized genus-4 base
  \draw[braidturquoise!65!black,line width=1.15pt,fill=braidturquoise!7]
    (-3.25,-1.15)
    .. controls (-2.95,-.55) and (-2.45,-.55) .. (-2.18,-1.15)
    .. controls (-1.92,-1.72) and (-1.42,-1.72) .. (-1.15,-1.15)
    .. controls (-.88,-.55) and (-.38,-.55) .. (-.12,-1.15)
    .. controls (.15,-1.72) and (.65,-1.72) .. (.92,-1.15)
    .. controls (1.19,-.55) and (1.69,-.55) .. (1.95,-1.15)
    .. controls (2.22,-1.72) and (2.72,-1.72) .. (3.25,-1.15)
    -- (3.25,-.05)
    .. controls (2.72,.52) and (2.22,.52) .. (1.95,-.05)
    .. controls (1.69,-.65) and (1.19,-.65) .. (.92,-.05)
    .. controls (.65,.52) and (.15,.52) .. (-.12,-.05)
    .. controls (-.38,-.65) and (-.88,-.65) .. (-1.15,-.05)
    .. controls (-1.42,.52) and (-1.92,.52) .. (-2.18,-.05)
    .. controls (-2.45,-.65) and (-2.95,-.65) .. (-3.25,-.05) -- cycle;

  % distinguished loop
  \draw[braidpurple!80!black,line width=1.25pt,dashed]
    (-2.75,-.58) .. controls (-1.5,.20) and (1.5,.20) .. (2.75,-.58);
  \node[font=\scriptsize,braidpurple!80!black] at (0,.35) {$u_4$ winds once};

  % braid strands lifted above loop
  \draw[braidred,line width=2.6pt]
    (-2.78,1.25) .. controls (-1.72,1.62) and (-.85,.72) .. (0,1.05)
    .. controls (.85,1.38) and (1.72,.48) .. (2.78,.86);
  \draw[braidblue,line width=2.6pt]
    (-2.78,.86) .. controls (-1.72,.48) and (-.85,1.38) .. (0,1.05)
    .. controls (.85,.72) and (1.72,1.62) .. (2.78,1.25);
  \fill[white] (-.07,1.05) circle (1.8pt);
  \draw[braidred,line width=2.6pt] (-.20,1.00) -- (.20,1.10);

  \node[font=\small,anchor=north] at (0,-1.88)
    {$g(F_2)=2(4-1)+1=7$};
\end{scope}

% ---------- right panel: F_3 ----------
\begin{scope}[xshift=5.05cm]
  \node[font=\bfseries] at (0,2.35) {$F_3\subset \Sigma_3\times S^2$};
  \node[font=\small] at (0,1.95) {$F_3\longrightarrow\Sigma_3$ is a connected triple cover};

  % stylized genus-3 base
  \draw[braidgreen!55!black,line width=1.15pt,fill=braidgreen!6]
    (-3.05,-1.15)
    .. controls (-2.70,-.48) and (-2.05,-.48) .. (-1.70,-1.15)
    .. controls (-1.35,-1.78) and (-.70,-1.78) .. (-.35,-1.15)
    .. controls (0,-.48) and (.65,-.48) .. (1.00,-1.15)
    .. controls (1.35,-1.78) and (2.00,-1.78) .. (3.05,-1.15)
    -- (3.05,-.05)
    .. controls (2.00,.58) and (1.35,.58) .. (1.00,-.05)
    .. controls (.65,-.72) and (0,-.72) .. (-.35,-.05)
    .. controls (-.70,.58) and (-1.35,.58) .. (-1.70,-.05)
    .. controls (-2.05,-.72) and (-2.70,-.72) .. (-3.05,-.05) -- cycle;

  \draw[braidpurple!80!black,line width=1.25pt,dashed]
    (-2.55,-.58) .. controls (-1.35,.22) and (1.35,.22) .. (2.55,-.58);
  \node[font=\scriptsize,braidpurple!80!black] at (0,.35) {$u_3$ winds once};

  % three-strand cyclic braid
  \draw[braidred,line width=2.45pt]
    (-2.65,1.38) .. controls (-1.45,1.52) and (-.85,.66) .. (0,.78)
    .. controls (.85,.90) and (1.45,.68) .. (2.65,.68);
  \draw[braidblue,line width=2.45pt]
    (-2.65,1.02) .. controls (-1.45,.82) and (-.85,1.48) .. (0,1.38)
    .. controls (.85,1.28) and (1.45,.96) .. (2.65,1.02);
  \draw[braidgreen,line width=2.45pt]
    (-2.65,.68) .. controls (-1.45,.72) and (-.85,.96) .. (0,1.08)
    .. controls (.85,1.20) and (1.45,1.50) .. (2.65,1.38);

  \node[font=\small,anchor=north] at (0,-1.88)
    {$g(F_3)=3(3-1)+1=7$};
\end{scope}

\draw[braidpurple!65,line width=.7pt] (0,-2.15)--(0,2.35);
\end{tikzpicture}
\caption{The two genus-seven cyclic multisections.  The lower silhouettes represent the base surfaces, with a distinguished loop on which the defining map $u_g$ has degree one.  Above that loop the sheets execute the indicated cyclic braid: two sheets for $F_2\to\Sigma_4$ and three for $F_3\to\Sigma_3$.  The pictures record the covering and braid monodromy; they are not meant as embeddings of the genus-seven surfaces in three-space.}
\label{fig:two-genus-seven}
\end{figure}

\subsection{The two genus-seven surfaces}

Apply Proposition~\ref{prop:cyclicsurface} with $(g,p)=(4,2)$ and
$(3,3)$.  We obtain
\[
 F_2:=F_{4,2}(u_4)\subset\Sigma_4\times S^2,
 \qquad
 F_3:=F_{3,3}(u_3)\subset\Sigma_3\times S^2,
\]
where $u_4,u_3$ represent primitive integral classes.

\begin{corollary}\label{cor:two}
The surfaces $F_2$ and $F_3$ are connected symplectic surfaces satisfying
\[
 g(F_2)=g(F_3)=7,
\]
\[
 [F_2]=2[\Sigma_4\times\{\mathrm{pt}\}],\qquad
 [F_3]=3[\Sigma_3\times\{\mathrm{pt}\}],
\]
and
\[
 F_2^2=F_3^2=0.
\]
\end{corollary}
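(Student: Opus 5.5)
The corollary is a direct specialization of Proposition~\ref{prop:cyclicsurface}, so the plan is to check its hypotheses in the two cases $(g,p)=(4,2)$ and $(g,p)=(3,3)$ and then read off the conclusions. The hypotheses are that $g\ge1$, that $p\ge2$, and that $u_g:\Sigma_g\to S^1$ represents a primitive class $\xi\in H^1(\Sigma_g;\mathbb Z)$. The first two are immediate. For the third, I will fix a concrete choice: let $\xi$ be the Poincar\'e dual of a nonseparating simple closed curve, or equivalently the class evaluating to $1$ on a standard symplectic basis element $a_1$ and to $0$ on the rest. Such a $\xi$ is primitive. It is represented by a smooth map $u_g$, for instance the map collapsing the complement of a collar of the dual curve $b_1$ and wrapping the collar once around $S^1$; this matches the loop drawn in Figure~\ref{fig:two-genus-seven}.

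Applying the proposition, $F_2=F_{4,2}(u_4)$ and $F_3=F_{3,3}(u_3)$ are connected embedded symplectic surfaces in $\Sigma_4\times S^2$ and $\Sigma_3\times S^2$ respectively. The proposition also gives the homology classes $[F_2]=2[\Sigma_4\times\{\mathrm{pt}\}]$ and $[F_3]=3[\Sigma_3\times\{\mathrm{pt}\}]$, and it gives vanishing self-intersection. The latter can be double-checked from $[\Sigma_g\times\{\mathrm{pt}\}]^2=0$, which yields $(pB)^2=p^2B^2=0$.

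The genus formula $g(F_{g,p}(u))=p(g-1)+1$ then gives $2\cdot3+1=7$ for $F_2$ and $3\cdot2+1=7$ for $F_3$. As a sanity check on the Euler characteristics, we have $\chi(F_2)=2(2-8)=-12$ and $\chi(F_3)=3(2-6)=-12$, both equal to $2-2\cdot7$.

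There is no real obstacle here. The only point needing care is making explicit that a primitive class is represented by a smooth circle-valued map, since $[\Sigma_g,S^1]\cong H^1(\Sigma_g;\mathbb Z)$; this is exactly the assumption built into the construction. It is also worth noting that connectedness uses only that the reduction of $u_*$ modulo $p$ is surjective, which primitivity guarantees for both $p=2$ and $p=3$.
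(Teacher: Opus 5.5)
Your proposal is correct and matches the paper, which obtains the corollary by applying Proposition~\ref{prop:cyclicsurface} with $(g,p)=(4,2)$ and $(3,3)$ for primitive classes $u_4,u_3$. Your explicit choice of $u_g$, dual to a nonseparating curve so that $u_*(a_1)=1$ and $u_*$ vanishes on the other standard generators, is the same normalization the paper uses later in Section~\ref{sec:complements}.
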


\begin{figure}[ht]
\centering
\begin{tikzpicture}[x=1cm,y=1cm,>=Latex,line cap=round,line join=round]

% F2 fiber
\begin{scope}[xshift=-4.65cm]
  \node[font=\bfseries] at (0,2.15) {$F_2\cap(\{\mathrm{pt}\}\times S^2)$};
  % sphere
  \fill[braidturquoise!5] (0,0) circle (1.55);
  \draw[braidturquoise!65!black,line width=1.15pt] (0,0) circle (1.55);
  \draw[braidturquoise!45!black,dashed,line width=.8pt]
       (-1.55,0) arc[start angle=180,end angle=360,x radius=1.55,y radius=.42];
  \draw[braidturquoise!45!black,line width=.8pt]
       (-1.55,0) arc[start angle=180,end angle=0,x radius=1.55,y radius=.42];
  % incoming sheets
  \draw[braidred,line width=2.2pt] (-2.75,.75) .. controls (-2.0,.65) and (-1.45,.42) .. (-.58,.30);
  \draw[braidblue,line width=2.2pt] (-2.75,-.75) .. controls (-2.0,-.65) and (-1.45,-.42) .. (.58,-.30);
  \fill[braidred] (-.58,.30) circle (4.5pt);
  \fill[braidblue] (.58,-.30) circle (4.5pt);
  \node[font=\small] at (0,-1.95) {$F_2\cdot(\{\mathrm{pt}\}\times S^2)=2$};
  \node[font=\scriptsize,braidpurple!85!black] at (0,1.12) {two transverse positive points};
\end{scope}

% F3 fiber
\begin{scope}[xshift=4.65cm]
  \node[font=\bfseries] at (0,2.15) {$F_3\cap(\{\mathrm{pt}\}\times S^2)$};
  \fill[braidgreen!4] (0,0) circle (1.55);
  \draw[braidgreen!55!black,line width=1.15pt] (0,0) circle (1.55);
  \draw[braidgreen!40!black,dashed,line width=.8pt]
       (-1.55,0) arc[start angle=180,end angle=360,x radius=1.55,y radius=.42];
  \draw[braidgreen!40!black,line width=.8pt]
       (-1.55,0) arc[start angle=180,end angle=0,x radius=1.55,y radius=.42];
  \draw[braidred,line width=2.2pt] (-2.75,.88) .. controls (-2.0,.75) and (-1.45,.55) .. (-.62,.42);
  \draw[braidblue,line width=2.2pt] (-2.75,0) .. controls (-2.0,.03) and (-1.45,.04) .. (.62,.42);
  \draw[braidgreen,line width=2.2pt] (-2.75,-.88) .. controls (-2.0,-.72) and (-1.45,-.58) .. (0,-.58);
  \fill[braidred] (-.62,.42) circle (4.5pt);
  \fill[braidblue] (.62,.42) circle (4.5pt);
  \fill[braidgreen] (0,-.58) circle (4.5pt);
  \node[font=\small] at (0,-1.95) {$F_3\cdot(\{\mathrm{pt}\}\times S^2)=3$};
  \node[font=\scriptsize,braidpurple!85!black] at (0,1.12) {three transverse positive points};
\end{scope}

\draw[braidpurple!55,line width=.7pt] (0,-2.05)--(0,2.2);
\end{tikzpicture}
\caption{A fiberwise view of the two multisections.  A generic sphere fiber $\{\mathrm{pt}\}\times S^2$ meets $F_2$ in two points and $F_3$ in three points, corresponding to the two and three sheets of the respective coverings.  In particular,
$[F_2]=2[\Sigma_4\times\{\mathrm{pt}\}]$ and
$[F_3]=3[\Sigma_3\times\{\mathrm{pt}\}]$.}
\label{fig:fiber-intersections}
\end{figure}

\section{The general mixed construction}\label{sec:general}

The pair $(g,p)=(4,2)$ and $(h,q)=(3,3)$ is one member of a more general
family.  Let
\[
 F_{g,p}\subset\Sigma_g\times S^2,\qquad
 F_{h,q}\subset\Sigma_h\times S^2
\]
be the cyclic multisections of Proposition~\ref{prop:cyclicsurface}.
They can be fiber-summed precisely when their genera agree, namely when
\begin{equation}\label{eq:genusmatch}
 p(g-1)=q(h-1)=:n.
\end{equation}
The common gluing surface then has genus
\[
 G=n+1.
\]
The geometry of the mixed sum is shown schematically in Figure~\ref{fig:mixed-sum}.
For a gluing diffeomorphism $\phi$ set
\[
 Z_{g,h}^{p,q}(\phi)=
 (\Sigma_g\times S^2\setminus\operatorname{int}\nu F_{g,p})
 \cup_\phi
 (\Sigma_h\times S^2\setminus\operatorname{int}\nu F_{h,q}).
\]

\begin{figure}[ht]
\centering
\begin{tikzpicture}[>=Latex,scale=0.95]
  % left block
  \fill[braidblue!8,rounded corners=9pt] (-6.0,-2.0) rectangle (-1.15,2.0);
  \draw[braidblue!55,line width=0.9pt,rounded corners=9pt] (-6.0,-2.0) rectangle (-1.15,2.0);
  \node[braidblue!80!black] at (-3.58,1.65) {$\Sigma_g\times S^2$};
  \draw[braidred,line width=2.0pt] (-5.4,-1.15) .. controls (-4.6,-0.25) and (-4.9,0.8) .. (-3.9,1.05)
     .. controls (-3.1,1.25) and (-2.75,0.0) .. (-1.75,-0.75);
  \draw[braidturquoise,line width=2.0pt] (-5.35,-0.55) .. controls (-4.55,0.35) and (-4.65,1.25) .. (-3.65,0.72)
     .. controls (-2.9,0.32) and (-2.65,-0.65) .. (-1.75,-1.18);
  \node[braidred!90!black] at (-4.95,-1.45) {$F_{g,p}$};
  \draw[braidpurple!85!black,dashed,line width=1.0pt,rounded corners=5pt] (-2.15,-1.55) rectangle (-1.45,1.35);
  \node[rotate=90,braidpurple!85!black] at (-1.8,0) {$\partial\nu F_{g,p}$};

  % right block
  \fill[braidgreen!8,rounded corners=9pt] (1.15,-2.0) rectangle (6.0,2.0);
  \draw[braidgreen!55,line width=0.9pt,rounded corners=9pt] (1.15,-2.0) rectangle (6.0,2.0);
  \node[braidgreen!55!black] at (3.58,1.65) {$\Sigma_h\times S^2$};
  \draw[braidblue,line width=2.0pt] (1.75,-1.15) .. controls (2.6,-0.1) and (2.5,1.0) .. (3.55,0.83)
     .. controls (4.45,0.7) and (4.65,-0.3) .. (5.4,-0.95);
  \draw[braidgreen,line width=2.0pt] (1.75,-0.62) .. controls (2.45,0.25) and (2.75,1.2) .. (3.7,0.55)
     .. controls (4.5,0.0) and (4.7,-0.85) .. (5.35,-1.28);
  \node[braidgreen!55!black] at (4.95,-1.5) {$F_{h,q}$};
  \draw[braidpurple!85!black,dashed,line width=1.0pt,rounded corners=5pt] (1.45,-1.55) rectangle (2.15,1.35);
  \node[rotate=90,braidpurple!85!black] at (1.8,0) {$\partial\nu F_{h,q}$};

  % gluing arrows and center
  \draw[->,braidpurple!90!black,line width=1.4pt] (-1.2,0.55) -- (-0.25,0.55);
  \draw[->,braidpurple!90!black,line width=1.4pt] (1.2,0.55) -- (0.25,0.55);
  \node[braidpurple!90!black] at (0,0.9) {$\phi$};
  \node[align=center] at (0,-0.15) {$F_G\times S^1$\\boundary};
\end{tikzpicture}
\caption{Schematic of the mixed fiber sum.  The cyclic multisections have the same genus $G$; after removing tubular neighborhoods, their boundaries are identified by a diffeomorphism $\phi$ reversing the normal-circle orientation.  The colored strands are only a visual reminder of the multisection structure.}
\label{fig:mixed-sum}
\end{figure}

\begin{proposition}[General geography]\label{prop:general-geography}
Assume \eqref{eq:genusmatch}.  Then
\[
 e\bigl(Z_{g,h}^{p,q}(\phi)\bigr)
 =8-4g-4h+4n
\]
and
\[
 \sigma\bigl(Z_{g,h}^{p,q}(\phi)\bigr)=0.
\]
Equivalently,
\[
 e=4n\left(1-\frac1p-\frac1q\right).
\]
\end{proposition}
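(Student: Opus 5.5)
The plan is to use the standard additivity formulas for the Euler characteristic and signature of a generalized fiber sum along surfaces of square zero. Neither invariant depends on the gluing map $\phi$.

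\emph{Euler characteristic.} Write $X=\Sigma_g\times S^2$, $Y=\Sigma_h\times S^2$, and let $F=F_{g,p}$, $F'=F_{h,q}$, both of genus $G=n+1$. The decomposition $X=(X\setminus\Int\nu F)\cup\nu F$ meets along $\partial\nu F\cong F\times S^1$, which has $e=0$. Since $e(\nu F)=e(F)$, we get
\[
 e(X\setminus\Int\nu F)=e(X)-e(F)=2(2-2g)-(2-2G),
\]
and similarly for $Y$. Gluing along the common boundary $F_G\times S^1$, whose Euler characteristic is again $0$, gives
\[
 e(Z)=e(X)+e(Y)-2e(F_G)=4(1-g)+4(1-h)-4(1-G).
\]
Substituting $G=n+1$ yields $8-4g-4h+4n$. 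For the second form, use \eqref{eq:genusmatch} to write $g-1=n/p$ and $h-1=n/q$. Then $8-4g-4h=-4(g-1)-4(h-1)=-4n/p-4n/q$, so $e=4n(1-1/p-1/q)$.

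\emph{Signature.} I would invoke Novikov additivity for gluing along closed $3$-manifolds, which gives $\sigma(Z)=\sigma(X\setminus\Int\nu F)+\sigma(Y\setminus\Int\nu F')$. Applying Novikov additivity again inside $X$ gives $\sigma(X)=\sigma(X\setminus\Int\nu F)+\sigma(\nu F)$. The tubular neighborhood $\nu F$ is a disk bundle of Euler number $F^2=0$ (Proposition~\ref{prop:cyclicsurface}), so its intersection form is $(0)$ and $\sigma(\nu F)=0$. Also $\sigma(X)=0$, since the intersection form of $\Sigma_g\times S^2$ is hyperbolic on $B,V$. Hence each complement has signature $0$, the same argument applies to $Y$, and therefore $\sigma(Z)=0$.

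The only point needing care is the justification of Novikov additivity for the complements, i.e.\ that signatures of manifolds with boundary glued along a full boundary component add. This is standard, so I expect no real obstacle. An alternative is to quote the symplectic-sum formula $\sigma(Z)=\sigma(X)+\sigma(Y)$ directly, which holds for square-zero gluing surfaces.
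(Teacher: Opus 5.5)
Your proof is correct and takes the same route as the paper. You use the fiber-sum formula $e(Z)=e(X)+e(Y)-2e(\Sigma_G)$ with the substitution $g-1=n/p$, $h-1=n/q$, and Novikov additivity of signature starting from two signature-zero product summands; the only difference is that you derive the Euler formula and the vanishing of each complement's signature (via $\sigma(\nu F)=0$ from $F^2=0$) explicitly, where the paper simply quotes them.
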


\begin{proof}
Since
\[
 e(\Sigma_g\times S^2)=4-4g,\qquad
 e(\Sigma_h\times S^2)=4-4h
\]
and $e(\Sigma_G)=2-2G=-2n$, the fiber-sum formula gives
\[
 e=(4-4g)+(4-4h)-2(-2n)
 =8-4g-4h+4n.
\]
Using $g=1+n/p$ and $h=1+n/q$ yields
\[
 e=4n\left(1-\frac1p-\frac1q\right).
\]
Both product summands have signature zero, hence so does the sum.
\end{proof}

\begin{corollary}[The $e=4$ equation]\label{cor:e4general}
Under \eqref{eq:genusmatch}, one has $e=4$ if and only if
\begin{equation}\label{eq:diophantine}
 n\left(1-\frac1p-\frac1q\right)=1,
\end{equation}
or equivalently
\begin{equation}\label{eq:triangle}
 \frac1p+\frac1q+\frac1n=1.
\end{equation}
\end{corollary}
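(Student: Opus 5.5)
The plan is to read the equivalence off the Euler characteristic formula of Proposition~\ref{prop:general-geography} and then clear denominators.

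\emph{Step 1 (condition \eqref{eq:diophantine}).} Under the genus-matching hypothesis \eqref{eq:genusmatch}, Proposition~\ref{prop:general-geography} gives
\[
 e\bigl(Z_{g,h}^{p,q}(\phi)\bigr)=4n\left(1-\frac1p-\frac1q\right),
\]
independently of the gluing map $\phi$. Hence $e=4$ holds if and only if
\[
 n\left(1-\frac1p-\frac1q\right)=1,
\]
which is exactly \eqref{eq:diophantine}.

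\emph{Step 2 (positivity of $n$).} To pass to \eqref{eq:triangle} I must divide by $n$, so I first check that $n\neq0$. Since $g\ge1$, we have $n=p(g-1)\ge0$. If $n=0$, then the left side of \eqref{eq:diophantine} is $0\neq1$. So in the $e=4$ case we automatically have $n>0$. For the reverse implication, \eqref{eq:triangle} itself contains the term $1/n$, so $n\neq0$ is implicit there.

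\emph{Step 3 (rearranging).} With $n>0$, divide \eqref{eq:diophantine} by $n$ to get
\[
 1-\frac1p-\frac1q=\frac1n,
\]
which is \eqref{eq:triangle}. Each step is reversible: multiplying \eqref{eq:triangle} by $n$ recovers \eqref{eq:diophantine}. This proves the equivalence.

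There is no real obstacle. The only point requiring care is the degenerate case $n=0$ (that is, $g=h=1$), and it is excluded by the observation in Step~2.
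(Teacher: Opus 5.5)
Your proof is correct and is essentially the paper's argument: the paper states this corollary without a separate proof, reading it off directly from the formula $e=4n\left(1-\frac1p-\frac1q\right)$ of Proposition~\ref{prop:general-geography}. Your check that $n\neq0$ is exactly the care needed to divide by $n$ and pass reversibly to $\frac1p+\frac1q+\frac1n=1$.
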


\begin{theorem}[Classification of the numerical cases]
\label{thm:classification}
Assume $p,q\ge2$, $g,h\ge2$, and
$p(g-1)=q(h-1)=n$.  If the mixed sum has $e=4$, then, up to interchanging
the two summands,
\[
 (p,q,n)=(2,3,6),\qquad(2,4,4),\qquad(3,3,3).
\]
The corresponding triples $(g,h,G)$ are
\[
 (4,3,7),\qquad(3,2,5),\qquad(2,2,4).
\]
\end{theorem}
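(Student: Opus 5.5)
By Corollary~\ref{cor:e4general}, the condition $e=4$ is equivalent to the Egyptian-fraction equation \eqref{eq:triangle},
\[
 \frac1p+\frac1q+\frac1n=1 .
\]
The plan is to solve this in positive integers under the constraints coming from the hypotheses, and then check which solutions are compatible with $p\mid n$, $q\mid n$, $g,h\ge2$.

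First record the constraints. From $p(g-1)=n$ with $g\ge2$ we get $n\ge p\ge2$, and similarly $n\ge q\ge2$. By the symmetry of the statement, assume $p\le q\le n$. Then
\[
 1=\frac1p+\frac1q+\frac1n\le\frac3p ,
\]
so $p\le3$, and $p\ge2$ leaves two cases.

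If $p=3$, the equation forces $\frac1q+\frac1n=\frac23$ with $3\le q\le n$. Then $\frac2q\ge\frac23$ gives $q\le3$, so $q=3$ and $n=3$.

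If $p=2$, then $\frac1q+\frac1n=\frac12$ with $2\le q\le n$. Here $q=2$ is impossible, since it would force $\frac1n=0$. Also $\frac2q\ge\frac12$ gives $q\le4$, so $q\in\{3,4\}$. This yields $n=6$ or $n=4$.

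This produces exactly the triples $(2,3,6)$, $(2,4,4)$ and $(3,3,3)$. Up to interchanging the summands, the ordering assumption $p\le q$ loses nothing. The remaining inequality $q\le n$ is forced by $h\ge2$, and the ordering $p\le q\le n$ among all three was used only as a bookkeeping convenience. More precisely, the case analysis should be run with $p\le q$ and the separate facts $n\ge p$, $n\ge q$, which is exactly what was used.

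Finally, compute $g=1+n/p$, $h=1+n/q$ and $G=n+1$ for each triple:
\[
 (2,3,6)\mapsto(4,3,7),\qquad (2,4,4)\mapsto(3,2,5),\qquad (3,3,3)\mapsto(2,2,4).
\]
In each case $p\mid n$ and $q\mid n$, so the divisibility constraints are satisfied. Hence all three solutions genuinely occur, and no further ones exist.

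There is no real obstacle here. The only care needed is to use $n\ge\max(p,q)$, which comes from $g,h\ge2$, so that the bound $p\le3$ is legitimate, and to exclude the degenerate $q=2$ case when $p=2$.
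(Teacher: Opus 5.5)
Your proposal is correct and follows essentially the same route as the paper: reduce $e=4$ to $\frac1p+\frac1q+\frac1n=1$, bound the smallest denominator by $3$, and finish with a short case analysis. The only difference is bookkeeping. You use $n\ge\max(p,q)$, which comes from $g,h\ge2$, to make $n$ the largest denominator before the case analysis. The paper instead solves the symmetric equation up to permutation and then uses $p\mid n$, $q\mid n$ to decide which entry is $n$.
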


\begin{proof}
Equation \eqref{eq:triangle} is elementary to classify.  Reorder its
three denominators as $2\le r\le s\le t$.  Since
$1=1/r+1/s+1/t\le3/r$, one has $r\le3$.

If $r=3$, then $1/s+1/t=2/3$.  Since $s,t\ge3$, equality forces
$s=t=3$.  If $r=2$, then $1/s+1/t=1/2$, so $s\le4$.  The cases
$s=3$ and $s=4$ give $t=6$ and $t=4$, respectively.  Thus, up to
permutation, the only solutions are
\[
 (2,3,6),\qquad(2,4,4),\qquad(3,3,3).
\]
Finally, $n$ is distinguished by the covering requirements
$p\mid n$ and $q\mid n$.  These leave, up to exchanging $p$ and $q$,
exactly the three displayed choices.  The formulas
$g=1+n/p$, $h=1+n/q$, and $G=n+1$ give the stated genera.
\end{proof}

\begin{corollary}\label{cor:threefamilies}
The three numerical cases are
\[
 (p,q;g,h;G)=(2,3;4,3;7),\quad
 (2,4;3,2;5),\quad
 (3,3;2,2;4).
\]
\end{corollary}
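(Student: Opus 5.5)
This corollary is a direct repackaging of Theorem~\ref{thm:classification}. The plan is to read off the three triples $(p,q,n)$ found there and convert each into the quintuple $(p,q;g,h;G)$ using
\[
 g=1+\frac np,\qquad h=1+\frac nq,\qquad G=n+1 .
\]

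For $(2,3,6)$ this gives $g=4$, $h=3$, $G=7$. For $(2,4,4)$ it gives $g=3$, $h=2$, $G=5$. For $(3,3,3)$ it gives $g=2$, $h=2$, $G=4$.

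I would then check that each case is genuinely realized, so that "the three numerical cases" is an equality and not just an upper bound.

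\begin{itemize}
\item The hypotheses $g,h\ge2$ and $p\mid n$, $q\mid n$ hold in all three cases.
\item Proposition~\ref{prop:cyclicsurface} supplies connected cyclic multisections $F_{g,p}$ and $F_{h,q}$ of the common genus $G=p(g-1)+1=q(h-1)+1$.
\item Proposition~\ref{prop:general-geography} gives $e=4n(1-1/p-1/q)$. This equals $6\cdot\tfrac16\cdot4$, $4\cdot\tfrac14\cdot4$ and $3\cdot\tfrac13\cdot4$ respectively, so $e=4$ in each case, and $\sigma=0$ always.
\end{itemize}

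There is no real obstacle. The only point needing care is the normalization "up to interchanging the two summands": I would order the summands so that $p\le q$, matching the listed ordering.
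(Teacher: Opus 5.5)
Your proposal is correct and takes essentially the paper's approach. The paper gives this corollary no separate proof: it simply restates the end of the proof of Theorem~\ref{thm:classification}, which converts $(p,q,n)$ into $(g,h,G)$ via $g=1+n/p$, $h=1+n/q$, $G=n+1$, exactly as you do. Your additional check that each of the three cases actually attains $e=4$ (via Proposition~\ref{prop:general-geography}, i.e.\ the converse direction of Corollary~\ref{cor:e4general}) is a sound supplement that the paper leaves implicit.
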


\begin{proposition}\label{prop:general-H1}
For the cyclic degree-$p$ multisection,
\[
 H_1(M_{g,p};\mathbb Z)
 \cong\mathbb Z^{2g}\oplus\mathbb Z/p.
\]
For a mixed $(p,q)$ sum, every gluing has quotients
\[
 H_1\bigl(Z_{g,h}^{p,q}(\phi);\mathbb Z\bigr)
 \twoheadrightarrow\mathbb Z/p,\qquad
 H_1\bigl(Z_{g,h}^{p,q}(\phi);\mathbb Z\bigr)
 \twoheadrightarrow\mathbb Z/q.
\]
In particular no direct mixed sum is an integral homology
$S^2\times S^2$ whenever $p,q\ge2$.
\end{proposition}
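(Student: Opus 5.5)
The first statement should follow from the long exact sequence of the pair $(X,M)$, where $X=\Sigma_g\times S^2$, $F=F_{g,p}(u)$ and $M=M_{g,p}=X\setminus\Int\nu F$. The second statement should follow from a homomorphism $H_1(M)\to\mathbb Z/p$ that kills the whole image of the boundary homology. I expect that, once this homomorphism is chosen well, the gluing map $\phi$ will play no role.

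For the first statement, excision and the Thom isomorphism give $H_k(X,M)\cong H_k(\nu F,\partial\nu F)\cong H_{k-2}(F)$. Hence $H_2(X,M)\cong\mathbb Z$ and $H_1(X,M)=0$. The long exact sequence then reads
\[
 H_2(X)\longrightarrow\mathbb Z\longrightarrow H_1(M)\longrightarrow H_1(X)\longrightarrow 0 .
\]
The first map is intersection with $[F]$, and the generator of $\mathbb Z$ goes to the meridian $\mu$ of $F$. By Proposition~\ref{prop:cyclicsurface}, $[F]=pB$ with $B^2=0$ and $B\cdot V=1$. So $B\cdot F=0$ and $V\cdot F=p$, and the image of the first map is $p\mathbb Z$. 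This gives a short exact sequence
\[
 0\to\mathbb Z/p\to H_1(M)\to H_1(X)\cong\mathbb Z^{2g}\to0 .
\]
It splits because $\mathbb Z^{2g}$ is free, so $H_1(M)\cong\mathbb Z^{2g}\oplus\mathbb Z/p$, with the torsion generated by $\mu$.

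For the quotients, define
\[
 \theta_A:H_1(A)\to H_1(\Sigma_g\times S^2)=H_1(\Sigma_g)\xrightarrow{\ \xi\ }\mathbb Z\to\mathbb Z/p,
\]
where $A=M_{g,p}$ and $\xi$ is the primitive class defining $F$. Since $F^2=0$, we have $\partial A\cong F\times S^1$, and $H_1(\partial A)$ is generated by $\mu$ together with push-offs $\gamma'$ of loops $\gamma\subset F$. The meridian $\mu$ bounds a normal disk in $X$, so $\theta_A(\mu)=0$.

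A push-off $\gamma'$ is homologous in $X$ to $\gamma$, whose image in $\Sigma_g$ is $q_*\gamma$. By the defining equation $z^p=u(x)$, we have $u\circ q=\rho_p\circ\mathrm{pr}_2$ on $F$. Hence $\xi(q_*\gamma)=\deg(u\circ q|_\gamma)=p\cdot\deg(\mathrm{pr}_2|_\gamma)\equiv0\pmod p$. This holds whatever the framing used for the push-off. So $\theta_A$ is a surjection that vanishes on the image of $H_1(\partial A)$; surjectivity holds because $\xi$ is primitive.

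The same construction gives $\theta_B:H_1(B)\to\mathbb Z/q$ vanishing on the image of $H_1(\partial B)$. Since $\partial A$ is connected, the Mayer--Vietoris sequence for $Z=A\cup_\phi B$ gives
\[
 H_1(\partial A)\to H_1(A)\oplus H_1(B)\to H_1(Z)\to0 .
\]
The first map is $c\mapsto(i_{A*}c,\,-i_{B*}\phi_*c)$. Because $\theta_A$ kills $i_{A*}c$ and $\theta_B$ kills $i_{B*}\phi_*c$, the map $\theta_A\oplus\theta_B$ vanishes on this image for every $\phi$. It therefore descends to a surjection $H_1(Z)\twoheadrightarrow\mathbb Z/p\oplus\mathbb Z/q$, which yields both quotients. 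Since $H_1(S^2\times S^2)=0$ and $p,q\ge2$, no direct mixed sum is an integral homology $S^2\times S^2$.

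The main obstacle is choosing the right surjection. The naive projection of $H_1(M)$ onto its $\mathbb Z/p$ summand does not work, because it sends $\mu\in H_1(\partial A)$ to a generator. The argument instead goes through $\xi$, and its one real input is the divisibility $\xi\circ q_*\equiv0\pmod p$, which comes from the equation $z^p=u(x)$.
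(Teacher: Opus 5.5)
Your proof is correct. For the quotients you follow essentially the paper's route. Your $\theta_A$ is the composite $H_1(M)\to H_1(\Sigma_g)\to H_1(\Sigma_g)/q_*H_1(F)\cong\mathbb Z/p$ from Lemmas~\ref{lem:image-cover} and~\ref{lem:boundaryH1}. The only differences there are that you prove just the containment $\xi(q_*H_1(F))\subseteq p\mathbb Z$, which is all that is needed, reading it off directly from $u\circ q=\rho_p\circ\mathrm{pr}_2$. You also observe that the same argument gives the joint quotient $H_1(Z)\twoheadrightarrow\mathbb Z/p\oplus\mathbb Z/q$ for every $\phi$.

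The genuine difference is in the first assertion. The paper abelianizes the semidirect product $\pi_1(P_p)\rtimes\pi_1(\Sigma_g)$ of Proposition~\ref{prop:generalpi1}: the cyclic monodromy identifies the peripheral classes, and the relation $\mu_1\cdots\mu_p=1$ then gives $p\mu=0$. You instead use the exact sequence of the pair $(X,M)$ with the Thom isomorphism, which is the method the paper itself uses only in the twisted case (Proposition~\ref{prop:twisted24homology}).

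Your route needs only that $F$ is connected with $[F]=pB$, together with the Künneth fact that $H_2(X)$ is spanned by $B$ and $V$. So it applies verbatim to any connected surface in that class, without the fibration, the monodromy, or the cyclic model, and it identifies the torsion as generated by the meridian. The paper's route depends on the bundle structure but yields the full group $\pi_1(M_{g,p})$. Via the north-pole section it also gives a preferred splitting $H_1(M_{g,p})\cong H_1(\Sigma_g)\oplus\mathbb Z/p$, which is used later in Lemma~\ref{lem:adapted-framing} and Proposition~\ref{prop:productH1}; your splitting is only abstract.
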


\begin{proof}
The first assertion is the general version of the calculations in
Section~\ref{sec:complements}: the cyclic monodromy identifies all
peripheral classes in the abelianization and their product relation gives
$p\mu=0$.  The second assertion follows from
Lemmas~\ref{lem:image-cover} and~\ref{lem:boundaryH1}: on each side the
boundary surface maps to an index-$p$ or index-$q$ subgroup of the base
homology, and the corresponding quotient survives the Mayer--Vietoris
cokernel.
\end{proof}

For a controlled class of framings, the integral statement can be sharpened.  The north-pole section gives a preferred splitting
\[
 H_1(M_{g,p};\mathbb Z)
 \cong H_1(\Sigma_g;\mathbb Z)\oplus\mathbb Z/p.
\]

\begin{lemma}\label{lem:adapted-framing}
There is a framing of the trivial normal bundle of $F_{g,p}$ for which
the push-off of every class in $H_1(F_{g,p};\mathbb Z)$ has zero
component in the meridional summand $\mathbb Z/p$ of
$H_1(M_{g,p};\mathbb Z)$.
\end{lemma}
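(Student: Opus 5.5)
We would prove the lemma by writing down an explicit framing, the "product framing" coming from the product structure of $\Sigma_g\times S^2$, and then computing the meridional component of each push-off.

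The surface $F=F_{g,p}(u)$ lies in $\Sigma_g\times S^1_{\mathrm{eq}}$. Its normal bundle splits into two real line bundles. The first is the normal direction to $S^1_{\mathrm{eq}}$ in $S^2$ (the latitude direction), which is trivial. The second is the normal direction to $F$ inside the $3$-manifold $\Sigma_g\times S^1_{\mathrm{eq}}$. This second bundle is trivialized by the gradient-type vector field of the function $(x,z)\mapsto z^p\overline{u(x)}$, whose zero level set is $F$. Together these give a framing of $\nu F$. The associated push-off of $F$ is
\[
F^+=\{(x,z_\epsilon):\ z^p=u(x)\},
\]
where $z_\epsilon$ denotes the point obtained by moving $z$ slightly toward the north pole $N$. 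The first normal direction alone already suffices for a framing up to homotopy, but we keep both for concreteness.

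The key step is to compute the class in $H_1(M_{g,p})$ of the push-off $\gamma^+$ of a loop $\gamma\subset F$. The meridional summand $\mathbb Z/p$ is detected by a homomorphism $H_1(M_{g,p})\to\mathbb Z/p$ that vanishes on the north-pole section $\Sigma_g\times\{N\}$. This is the splitting stated just before the lemma. We would realize this homomorphism concretely: on the complement of $F$ inside $\Sigma_g\times S^2$, the function $z^p/u(x)$ extends to a map to $\mathbb{C}^*$, or, more naturally, the $p$-fold cyclic branched-cover structure gives a map to $S^1$ whose reduction mod $p$ is the meridional projection. Concretely, we would use the linking number mod $p$ with a $3$-chain bounded by $F$ minus $p$ copies of $\Sigma_g\times\{N\}$. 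Such a chain exists because $[F]=p[\Sigma_g\times\{\mathrm{pt}\}]$ by Proposition~\ref{prop:cyclicsurface}. We would take the chain
\[
C=\{(x,w): w \text{ on the meridian arc from } N \text{ to a root } z \text{ with } z^p=u(x)\}.
\]
The meridional component of a loop is then its intersection number with $C$ mod $p$.

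For the push-off, $\gamma^+$ lies in the northern hemisphere, slightly off the equator. We would arrange $C$ to use arcs from $N$ running down to the equator, and choose the push-off direction so that $\gamma^+$ runs parallel to $C$ rather than crossing it. For example, we can displace $\gamma^+$ within the equatorial band away from the arcs, or note that $\gamma^+$ can be isotoped in the complement of $F$ into $\Sigma_g\times\{\text{northern hemisphere}\}\setminus C$. In that case $\gamma^+\cdot C=0$, so the meridional component vanishes.

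The main obstacle is making this intersection count rigorous: we must verify that the chosen framing really makes $\gamma^+$ disjoint from $C$, and that the homomorphism given by intersection with $C$ mod $p$ coincides with the projection to $\mathbb Z/p$ in the preferred splitting. We would check the latter on generators. A meridian $\mu$ meets $C$ once. The section $\Sigma_g\times\{N\}$ meets $C$ in zero points after a small perturbation. This matches the splitting $H_1(M_{g,p})\cong H_1(\Sigma_g)\oplus\mathbb Z/p$ asserted just before the lemma.

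If the product framing instead produced a constant twist, namely a meridional component given by a homomorphism $H_1(F)\to\mathbb Z/p$, we would correct it. Such a homomorphism lifts to $H^1(F;\mathbb Z)$, and changing the framing by the corresponding map $F\to S^1$ subtracts that twist. This gives the framing required by the lemma.
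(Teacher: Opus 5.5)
Your last paragraph is exactly the paper's proof, and it proves the lemma by itself. For an \emph{arbitrary} framing, the meridional part of the push-off map is a homomorphism $\eta\colon H_1(F_{g,p};\mathbb Z)\to\mathbb Z/p$. Retwisting by $\alpha\in H^1(F_{g,p};\mathbb Z)$ changes $\eta$ by $\alpha\bmod p$. The reduction $H^1(F_{g,p};\mathbb Z)\to\operatorname{Hom}(H_1(F_{g,p};\mathbb Z),\mathbb Z/p)$ is onto because $H_1(F_{g,p};\mathbb Z)$ is free. So your proposal is complete, and the explicit computation is not needed for the lemma as stated.

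The explicit route would prove more: that the natural product framing is already adapted, which is true. As written, however, one of its steps is false. Write $\lambda_C$ for intersection with your chain $C$ mod $p$. It is \emph{not} the meridional projection of the north-pole splitting. The arcs of $C$ leave $N$ in directions that rotate with $u(x)$. So for $N'$ near $N$, the loop $a_1\times\{N'\}$ meets $C$ exactly where $u$ takes a fixed value, that is, algebraically $\pm\langle\xi,a_1\rangle=\pm1$ times. In general $\lambda_C(s_{N*}\gamma)=\pm\langle\xi,\gamma\rangle\bmod p$, so the section $\Sigma_g\times\{N\}$ cannot be perturbed off $C$, contrary to your check.

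Since $C$ lies in the closed northern hemisphere and misses $\Sigma_g\times\{S\}$, $\lambda_C$ is really the projection for the \emph{south}-pole splitting. The two splittings genuinely differ: $s_{S*}\gamma-s_{N*}\gamma=\pm\langle\xi,\gamma\rangle\,\mu$ in $H_1(M_{g,p};\mathbb Z)$. Here $s_N,s_S$ are the pole sections and $\mu$ is the meridian.

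Your conclusion nevertheless survives, for a reason you did not invoke. Let $\pi\colon M_{g,p}\to\Sigma_g$ be the projection. The two projections differ by $\pm\langle\xi,\pi_*(\cdot)\rangle\bmod p$. On a push-off one has $\pi_*(\gamma^+)=q_*\gamma$, which pairs with $\xi$ into $p\mathbb Z$ by Lemma~\ref{lem:image-cover}.

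A cleaner repair is to build the chain from arcs running from $S$ to the roots instead. That chain lies in the southern hemisphere. It is therefore disjoint both from $\Sigma_g\times\{N\}$ and from your push-off $F^+$ toward $N$. This gives both verifications at once and needs no tangential displacement.
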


\begin{proof}
Fix any framing.  Relative to the displayed splitting, the meridional
component of the push-off map is a homomorphism
\[
 \eta:H_1(F_{g,p};\mathbb Z)\longrightarrow\mathbb Z/p.
\]
The normal bundle is trivial, and its homotopy classes of framings form
an affine space over
\[
 [F_{g,p},S^1]\cong H^1(F_{g,p};\mathbb Z).
\]
Changing the framing by
$\alpha\in H^1(F_{g,p};\mathbb Z)$ changes the push-off of
$x\in H_1(F_{g,p};\mathbb Z)$ by $\langle\alpha,x\rangle$ meridians,
and hence changes $\eta$ by the reduction of $\alpha$ modulo $p$.
Since $H_1(F_{g,p};\mathbb Z)$ is free abelian,
the reduction map
\[
 H^1(F_{g,p};\mathbb Z)\longrightarrow
 \operatorname{Hom}(H_1(F_{g,p};\mathbb Z),\mathbb Z/p)
\]
is surjective.  Choose $\alpha$ reducing to $-\eta$.  In the resulting
framing the meridional component vanishes identically.
\end{proof}

We call such a framing \emph{adapted}.  By an adapted product-framed
gluing below we mean a boundary identification
\[
 \phi(x,e^{it})=(f(x),e^{-it})
\]
with respect to adapted framings on the two sides and with no additional
rim twist.

\begin{proposition}
\label{prop:productH1}
Assume the genus-matching condition and let
\[
 \Theta_f=
 \begin{pmatrix}
 q_{p*}\\
 -q_{q*}f_*
 \end{pmatrix}
 :
 H_1(\Sigma_G;\mathbb Z)
 \longrightarrow
 H_1(\Sigma_g;\mathbb Z)\oplus H_1(\Sigma_h;\mathbb Z).
\]
For an adapted product-framed gluing one has
\[
 H_1(Z_{g,h}^{p,q}(\phi);\mathbb Z)
 \cong
 \operatorname{coker}\Theta_f
 \oplus
 \frac{\mathbb Z/p\oplus\mathbb Z/q}
 {\langle(1,-1)\rangle}.
\]
Consequently
\[
 H_1(Z_{g,h}^{p,q}(\phi);\mathbb Z)
 \cong
 \operatorname{coker}\Theta_f\oplus\mathbb Z/\gcd(p,q).
\]
If the first homology is finite, then its order is divisible by $pq$.
In the square case $G=g+h$, finiteness is equivalent to
$\det\Theta_f\ne0$, and in that case
\[
 \left|H_1(Z_{g,h}^{p,q}(\phi);\mathbb Z)\right|
 =
 \gcd(p,q)\,|\det\Theta_f|.
\]
\end{proposition}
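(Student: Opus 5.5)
The plan is to compute $H_1(Z)$ by Mayer--Vietoris for the decomposition $Z=M_{g,p}\cup_\phi M_{h,q}$ along $\partial\nu F\cong\Sigma_G\times S^1$. Because $M_{g,p}$ and $M_{h,q}$ are connected and so is their intersection, the relevant segment of the sequence gives
\[
 H_1(Z)\cong\operatorname{coker}\Bigl(H_1(\Sigma_G\times S^1)\xrightarrow{(i_*,\,-j_*\phi_*)}H_1(M_{g,p})\oplus H_1(M_{h,q})\Bigr).
\]
Here $H_1(\Sigma_G\times S^1)=H_1(\Sigma_G)\oplus\mathbb Z\mu$. On each side I use the splitting $H_1(M_{g,p})\cong H_1(\Sigma_g)\oplus\mathbb Z/p$ given by the north-pole section, which comes from Proposition~\ref{prop:general-H1}.

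The next step is to identify the two blocks of the map. First, the meridian $\mu$ generates the $\mathbb Z/p$ summand on the left. I will check, using Lemma~\ref{lem:boundaryH1}, that its image in $H_1(\Sigma_g)$ vanishes: the meridian bounds a disk fiber of $\nu F$ and projects to a point of $\Sigma_g$. Second, by Lemma~\ref{lem:image-cover}, the push-off of a class $x\in H_1(\Sigma_G)$ projects in $H_1(\Sigma_g)$ to $q_{p*}x$, since projecting to the base commutes with pushing off inside a small tubular neighborhood. Its meridional component is zero because the framing is adapted (Lemma~\ref{lem:adapted-framing}). The product form $\phi(x,e^{it})=(f(x),e^{-it})$ sends $H_1(\Sigma_G)$ to $H_1(\Sigma_G)$ by $f_*$ with no meridional correction, since there is no rim twist, and sends $\mu\mapsto-\mu$. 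The map is therefore block diagonal: $\Theta_f$ on $H_1(\Sigma_G)\to H_1(\Sigma_g)\oplus H_1(\Sigma_h)$, and $\mu\mapsto(1,-(-1))$ on the torsion part. Up to the sign convention of the Mayer--Vietoris difference map, the latter is the class $(1,-1)$, and its sign is irrelevant to the subgroup it generates. This yields the displayed direct sum. Finally, $(\mathbb Z/p\oplus\mathbb Z/q)/\langle(1,-1)\rangle\cong\mathbb Z/\gcd(p,q)$: the quotient is cyclic, generated by the image of $(1,0)$, and of order $pq/\operatorname{lcm}(p,q)$ because $(1,-1)$ has order $\operatorname{lcm}(p,q)$.

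For the divisibility claim, Lemma~\ref{lem:image-cover} gives that the image of $q_{p*}$ has index $p$ in $H_1(\Sigma_g)$; it is the kernel of $\xi$ mod $p$. Thus $\operatorname{coker}\Theta_f$ surjects onto $\mathbb Z/p$ via $\xi\bmod p$ on the first factor, and likewise onto $\mathbb Z/q$. Combining these gives a surjection onto $\mathbb Z/p\oplus\mathbb Z/q$, namely $(\xi_g\bmod p,\ \xi_h\bmod q)$. This map kills the image of $\Theta_f$ because each component kills its own factor's image separately. I need this combined map to be onto, and it is, because $\xi_g$ and $\xi_h$ are each primitive on independent summands. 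Hence when $\operatorname{coker}\Theta_f$ is finite, its order is divisible by $pq$, and so is the order of $H_1(Z)$.

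In the square case $\operatorname{rank}H_1(\Sigma_G)=2G=2g+2h$, so $\Theta_f$ is a square integer matrix. Its cokernel is finite if and only if $\det\Theta_f\ne0$, and then the cokernel has order $|\det\Theta_f|$. Multiplying by $\gcd(p,q)$ gives the stated formula. The main obstacle I expect is the block-diagonal claim: verifying carefully that the adapted framing on each side, transported through $\phi$, really produces zero meridional components on both sides simultaneously, and that no cross term from $H_1(\Sigma_G)$ into the torsion summands survives. I would address this by working directly with the explicit product identification and the definition of the adapted framing.
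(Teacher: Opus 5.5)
Your proof is correct, and its main line is the paper's: the same Mayer--Vietoris cokernel, the same block decomposition from the north-pole splitting $H_1(M_{g,p})\cong H_1(\Sigma_g)\oplus\mathbb Z/p$ and the adapted framings, the same identification $(\mathbb Z/p\oplus\mathbb Z/q)/\langle(1,-1)\rangle\cong\mathbb Z/\gcd(p,q)$, and the same determinant argument in the square case.

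The one genuine difference is the divisibility step. The paper uses the two quotients $\operatorname{coker}\Theta_f\twoheadrightarrow\mathbb Z/p$ and $\operatorname{coker}\Theta_f\twoheadrightarrow\mathbb Z/q$ separately. From these it gets only $\operatorname{lcm}(p,q)\mid|\operatorname{coker}\Theta_f|$, and it recovers $pq$ by multiplying by the meridional factor $\gcd(p,q)$. You instead use the single map $(\xi_g\bmod p,\ \xi_h\bmod q)$. This is a legitimate surjection $\operatorname{coker}\Theta_f\twoheadrightarrow\mathbb Z/p\oplus\mathbb Z/q$: its two components live on different direct summands of the target of $\Theta_f$, and each kills the corresponding component of $\operatorname{im}\Theta_f$ by Lemma~\ref{lem:image-cover}.

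Your route therefore gives $pq\mid|\operatorname{coker}\Theta_f|$ directly. Combined with the $\mathbb Z/\gcd(p,q)$ summand, it proves the sharper statement $pq\gcd(p,q)\mid|H_1(Z_{g,h}^{p,q}(\phi);\mathbb Z)|$ for adapted product-framed gluings. Concretely, the order is divisible by $16$ and $27$, not just $8$ and $9$, in the $(2,4)$ and $(3,3)$ cases. This bears directly on the sharpness question posed at the end of the paper: for adapted product-framed gluings, the bounds $8$ and $9$ cannot be attained.
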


\begin{proof}
The restriction to the adapted product framings of
Lemma~\ref{lem:adapted-framing} is essential here, since arbitrary rim
twists can mix surface and meridional components.  With the chosen product framings,
\[
 H_1(M_{g,p};\mathbb Z)
 =
 H_1(\Sigma_g;\mathbb Z)\oplus\mathbb Z/p
\]
and similarly on the other side.  Surface push-offs have no meridional
component, while the normal circle maps to the meridional generator.
Thus the Mayer--Vietoris map splits into the surface map $\Theta_f$ and
the meridional map
\[
 \mathbb Z\longrightarrow\mathbb Z/p\oplus\mathbb Z/q,
 \qquad
 1\longmapsto(1,-1).
\]
Taking cokernels gives the asserted direct sum.  The second cokernel is
cyclic of order $\gcd(p,q)$.

If $\operatorname{coker}\Theta_f$ is finite, projecting it to either
base and then quotienting by the covering image gives surjections onto
$\mathbb Z/p$ and $\mathbb Z/q$.  Hence its order is divisible by
$\operatorname{lcm}(p,q)$, and the order of the full first homology is
therefore divisible by
$\gcd(p,q)\operatorname{lcm}(p,q)=pq$.  When $G=g+h$, the map
$\Theta_f$ is square, and if its cokernel is finite then
$|\operatorname{coker}\Theta_f|=|\det\Theta_f|$, which gives the
stated formula.
\end{proof}

\begin{lemma}\label{lem:symp-general-position}
Let $V$ be a symplectic vector space of dimension $2G$ and let
$K,L\subset V$ be symplectic subspaces.  There is
$T\in\operatorname{Sp}(V)$ such that
\[
 \dim(K\cap T^{-1}L)
 =
 \max\{0,\dim K+\dim L-2G\}.
\]
\end{lemma}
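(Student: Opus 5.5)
The plan is to reduce to a normal form for pairs of symplectic subspaces, using Witt's extension theorem for symplectic forms. Any linear isomorphism between two subspaces that preserves the restricted forms extends to an element of $\operatorname{Sp}(V)$. So a symplectic subspace is determined up to $\operatorname{Sp}(V)$ by its dimension alone. Write $\dim K=2a$ and $\dim L=2b$. Choose a symplectic basis $e_1,f_1,\dots,e_G,f_G$ of $V$. Let $S$ be the span of the first $b$ hyperbolic pairs $(e_i,f_i)$. Let $T_0\in\operatorname{Sp}(V)$ be any element with $T_0(S)=L$, which exists by Witt. Then $T^{-1}L=T_1^{-1}S$ with $T_1=T_0^{-1}T$, and $T\mapsto T_1$ is a bijection of $\operatorname{Sp}(V)$. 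So it suffices to treat $L=S$.

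In the same way, applying an element of $\operatorname{Sp}(V)$ to both subspaces, we may assume $K$ is spanned by a chosen set of $a$ hyperbolic pairs. The freedom in $T$ is then exactly the freedom to move $S$ to any other symplectic $2b$-dimensional subspace, since all of these are images of $S$. So we only need to exhibit one symplectic $2b$-plane $L'$ with the required intersection with $K$.

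Split into two cases.

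\emph{Case $a+b\le G$.} Take $K=\operatorname{span}\{e_i,f_i:i\le a\}$ and $L'=\operatorname{span}\{e_i,f_i:a<i\le a+b\}$. These are symplectic, and $K\cap L'=0$.

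\emph{Case $a+b>G$.} Take $K=\operatorname{span}\{e_i,f_i:i\le a\}$ and $L'=\operatorname{span}\{e_i,f_i:G-b<i\le G\}$. The index sets overlap in exactly $a+b-G$ indices, so $\dim(K\cap L')=2(a+b-G)=\dim K+\dim L-2G$.

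In both cases the value is the stated maximum. The general lower bound $\dim(K\cap L')\ge\dim K+\dim L-2G$ holds for any pair of subspaces, which is consistent with this value being attained.

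The only step needing care is the use of Witt's theorem, namely the transitivity of $\operatorname{Sp}(V)$ on symplectic subspaces of a fixed dimension. This can be proved by hand rather than cited. Extend a symplectic basis of the subspace to one of $V$, using that the symplectic complement of a symplectic subspace is again symplectic. Then map one extended symplectic basis to the other; the resulting linear map lies in $\operatorname{Sp}(V)$.

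Finally, translate back: if $T_1$ carries $S$ to $L'$ after normalizing $K$, then conjugating by the normalizing maps gives the required $T$ with $\dim(K\cap T^{-1}L)=\max\{0,\dim K+\dim L-2G\}$.
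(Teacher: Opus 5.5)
Your proof is correct and follows essentially the same route as the paper: both rest on the transitivity of $\operatorname{Sp}(V)$ on symplectic subspaces of a fixed dimension (obtained by extending symplectic bases), and both realize the minimum with coordinate subspaces sharing exactly $\max\{0,a+b-G\}$ hyperbolic pairs; the only difference is that the paper moves $K$ relative to $L$, while you move $L$ relative to $K$. One trivial slip at the end: you need $T_1^{-1}S=L'$, i.e.\ $T_1$ carries $L'$ to $S$ rather than $S$ to $L'$.
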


\begin{proof}
Write $\dim K=2a$ and $\dim L=2b$.  Choose symplectic bases of $K$ and
$L$ and extend them to symplectic bases of $V$.  If $a+b\le G$, send
$K$ to a symplectic coordinate subspace contained in the symplectic
complement of $L$; then the intersection is zero.  If $a+b>G$, split
$K$ as a symplectic direct sum of dimensions
$2(a+b-G)$ and $2(G-b)$, map the first summand onto a symplectic
subspace of $L$ of dimension $2(a+b-G)$, and map the second into
$L^\perp$.  Extending these identifications on symplectic complements
gives a symplectic automorphism with intersection dimension
$2(a+b-G)=\dim K+\dim L-2G$.
\end{proof}

\begin{theorem}[General rational-homology criterion]\label{thm:general-rational}
Assume \eqref{eq:genusmatch}.  There exists a gluing $\phi$ for which
\[
 H_1\bigl(Z_{g,h}^{p,q}(\phi);\mathbb Q\bigr)=0
\]
if and only if
\begin{equation}\label{eq:dimensioncriterion}
 G\ge g+h.
\end{equation}
When equality holds, the rational Mayer--Vietoris surface map can be
chosen to be an isomorphism.
\end{theorem}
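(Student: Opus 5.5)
Tensoring the Mayer--Vietoris sequence with $\mathbb Q$ kills every torsion summand: the $\mathbb Z/p$ and $\mathbb Z/q$ of Proposition~\ref{prop:general-H1}, and whatever finite rim-twist mixing appears. The normal circle of $\partial\nu F\cong\Sigma_G\times S^1$ maps to the meridian, which is torsion in $H_1(M_{g,p};\mathbb Z)$ and so vanishes rationally. For an arbitrary gluing $\phi$, framing changes and rim twists only alter the push-off map by multiples of the meridian, so they also vanish rationally. Rationally, then,
\[
 H_1(Z;\mathbb Q)\cong\operatorname{coker}\Bigl(\Theta_f\otimes\mathbb Q:
 H_1(\Sigma_G;\mathbb Q)\to H_1(\Sigma_g;\mathbb Q)\oplus H_1(\Sigma_h;\mathbb Q)\Bigr).
\]
Here $f$ is the surface part of $\phi$. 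I would check that the boundary class in $H_2$ contributes nothing to $H_1$, and that the connecting term does not enter the cokernel. So the problem reduces to deciding when $\Theta_f\otimes\mathbb Q$ can be surjective.

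Necessity is a dimension count. The source has dimension $2G$ and the target has dimension $2g+2h$, so surjectivity forces $G\ge g+h$.

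For sufficiency I would use the transfer. Since $q_p\colon F_{g,p}\to\Sigma_g$ is a finite cover, $q_{p*}$ is rationally surjective, and $q_p^*$ is a rational splitting that multiplies the intersection form by $p$. So $K:=\ker q_{p*}\otimes\mathbb Q$ is the symplectic complement of the symplectic subspace $q_p^*H^1$, and $K$ is itself symplectic of dimension $2G-2g$. Likewise $L:=\ker q_{q*}\otimes\mathbb Q$ is symplectic of dimension $2G-2h$.

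Surjectivity of $\Theta_f$ then amounts to the following. Given $(a,b)$, pick $x$ with $q_{p*}x=a$ and $y$ with $q_{q*}f_*y=-b$. We need $x-y$ to lie in $K+f_*^{-1}L$ (suitably adjusted). This is equivalent to $K+f_*^{-1}L=H_1(\Sigma_G;\mathbb Q)$, and by counting dimensions that holds exactly when
\[
 \dim(K\cap f_*^{-1}L)=\dim K+\dim L-2G=2G-2g-2h.
\]
Lemma~\ref{lem:symp-general-position} supplies $T\in\operatorname{Sp}(H_1(\Sigma_G;\mathbb Q))$ achieving exactly this intersection dimension when $G\ge g+h$.

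The main obstacle is realizing such a $T$ by a diffeomorphism. Mapping classes only realize $\operatorname{Sp}(2G,\mathbb Z)$, so I would first choose $T$ with rational entries and then approximate it. Transversality of $K$ and $T^{-1}L$ is a Zariski-open condition on $\operatorname{Sp}(2G)$, it is nonempty by Lemma~\ref{lem:symp-general-position}, and $\operatorname{Sp}(2G,\mathbb Z)$ is Zariski dense (Borel density). Hence some integral $T$ works, and the surjection $\operatorname{Mod}(\Sigma_G)\to\operatorname{Sp}(2G,\mathbb Z)$ gives a diffeomorphism $f$. Extending it by $\phi=f\times\overline{\mathrm{id}}$ gives an orientation-reversing gluing.

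When $G=g+h$, $\Theta_f\otimes\mathbb Q$ is a square matrix and surjective, hence an isomorphism, which gives the final claim.
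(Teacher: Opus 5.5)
Your proposal is correct and follows essentially the same route as the paper. Both arguments make the same steps: the rational reduction to the surface map $\Theta$, the dimension count $2G$ versus $2g+2h$ for necessity, symplecticity of the kernels of the covering push-forwards, Lemma~\ref{lem:symp-general-position} for the intersection dimension, and then Zariski openness, Borel density, and $\operatorname{Mod}(\Sigma_G)\twoheadrightarrow\operatorname{Sp}(2G,\mathbb Z)$ to realize an integral gluing. Your criterion $K+f_*^{-1}L=H_1(\Sigma_G;\mathbb Q)$ is the paper's rank identity $\operatorname{rank}\Theta_T=2G-\dim(K_A\cap T^{-1}K_B)$ in different words, and your remark that framing changes and rim twists vanish rationally spells out a point the paper leaves implicit.
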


\begin{proof}
Over $\mathbb Q$ the meridional torsion disappears.  Write
\[
 V=H_1(\Sigma_G;\mathbb Q),\qquad
 A=q_{p*}:V\to H_1(\Sigma_g;\mathbb Q),\qquad
 B=q_{q*}:V\to H_1(\Sigma_h;\mathbb Q).
\]
The Mayer--Vietoris map on the surface factor is
\[
 \Theta_T=(A,-BT),
 \qquad T\in\operatorname{Sp}(V).
\]
Its target has dimension $2g+2h$, while $\dim V=2G$, so
$G\ge g+h$ is necessary for surjectivity.

Put $K_A=\ker A$ and $K_B=\ker B$.  By the same argument as in
Lemma~\ref{lem:symplectic-kernel}, both are symplectic subspaces, of
dimensions
\[
 \dim K_A=2G-2g,\qquad
 \dim K_B=2G-2h.
\]
By Lemma~\ref{lem:symp-general-position} one may arrange
\[
 \dim(K_A\cap T^{-1}K_B)
 =
 \max\{0,\dim K_A+\dim K_B-2G\}
 =
 \max\{0,2G-2g-2h\}.
\]
When $G\ge g+h$, this minimum equals $2G-2g-2h$.  Therefore
\[
 \operatorname{rank}\Theta_T
 =2G-\dim(K_A\cap T^{-1}K_B)
 =2g+2h,
\]
so $\Theta_T$ is surjective.  Notice that the intersection is zero only
in the equality case $G=g+h$; this is why the equality case gives an
isomorphism rather than merely a surjection.

The maximal-rank condition is the nonvanishing of suitable minors and
hence defines a nonempty Zariski-open subset of the symplectic group.
The arithmetic subgroup $\operatorname{Sp}(2G,\mathbb Z)$ is Zariski
dense in the ambient symplectic group \cite{BorelDensity}; consequently
the open set contains an integral symplectic matrix.  Finally the
surjection
\[
 \operatorname{Mod}(\Sigma_G)\twoheadrightarrow
 \operatorname{Sp}(2G,\mathbb Z)
\]
\cite{FarbMargalit} realizes that matrix by a surface diffeomorphism.
\end{proof}

\begin{corollary}[Rational cohomology $S^2\times S^2$ cases]
\label{cor:classifiedQ}
For each of the three cases in Theorem~\ref{thm:classification}, there is
a gluing for which
\[
 H^*(Z;\mathbb Q)\cong H^*(S^2\times S^2;\mathbb Q)
\]
as graded rings.  For every direct gluing in these cases,
$\pi_1(Z)\ne1$ and $H_1(Z;\mathbb Z)\ne0$.
\end{corollary}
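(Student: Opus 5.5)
The plan is to check, for each of the three triples $(p,q;g,h;G)$ of Theorem~\ref{thm:classification}, that the dimension criterion \eqref{eq:dimensioncriterion} holds with equality, so that Theorem~\ref{thm:general-rational} supplies a gluing with $b_1=0$. The remaining numerical invariants then come from Proposition~\ref{prop:general-geography}, and the ring structure follows from a short intersection-form argument.

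\emph{Step 1: equality in the criterion.} The three cases give $g+h=4+3=7=G$, $3+2=5=G$, and $2+2=4=G$. In every case the equality case of Theorem~\ref{thm:general-rational} applies. It produces a mapping class $f$ for which the rational surface map $\Theta_f$ is an isomorphism. I would take the gluing $\phi(x,e^{it})=(f(x),e^{-it})$ to be adapted and product-framed, which Lemma~\ref{lem:adapted-framing} allows. Proposition~\ref{prop:productH1} then shows that $H_1(Z;\mathbb Z)\cong\operatorname{coker}\Theta_f\oplus\mathbb Z/\gcd(p,q)$ is finite, so $b_1=b_3=0$.

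\emph{Step 2: Betti numbers and the cup product.} Since $e=4$ and $b_1=b_3=0$, we get $b_2=2$. Since $\sigma=0$, the rational intersection form is nondegenerate, indefinite, of rank two, with signature zero. I would still need to show it is even, i.e.\ rationally isometric to the hyperbolic form $H=\begin{pmatrix}0&1\\1&0\end{pmatrix}$. Over $\mathbb Q$ any rank-two form of signature zero is $\langle a\rangle\oplus\langle -b\rangle$ with $a,b>0$. It is isometric to $H$ exactly when $-ab$ is minus a square, that is, when $ab\in(\mathbb Q^\times)^2$, equivalently when the form represents zero.

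To see this, I would exhibit a nonzero class of square zero. The glued surface $\Sigma_G\subset Z$, that is, a push-off of $F_G$ into the boundary, has self-intersection $0$. Its class is nonzero rationally because it pairs nontrivially with a class built from the sphere fibers. The fiber $\{\mathrm{pt}\}\times S^2$ meets $F_{g,p}$ in $p$ points. Taking $q$ copies of it on one side and $p$ copies on the other, removing the normal disks, and matching the $pq$ boundary circles through the gluing, gives a closed surface that meets a parallel copy of $\Sigma_G$ nontrivially. (The matching may require some care, but the matching of the normal-circle counts $q\cdot p=p\cdot q$ is what makes it possible.) An isotropic nonzero vector forces the form to be hyperbolic, so $H^*(Z;\mathbb Q)\cong H^*(S^2\times S^2;\mathbb Q)$ as graded rings.

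\emph{Step 3: nontriviality.} The statements $H_1(Z;\mathbb Z)\neq0$ and $\pi_1(Z)\neq1$ for every direct gluing follow from Proposition~\ref{prop:general-H1}: $H_1$ surjects onto $\mathbb Z/p$ with $p\ge2$.

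I expect the main obstacle to be Step 2, namely producing the dual class so that the square-zero class is rationally nontrivial. Everything else is bookkeeping with the results already stated.
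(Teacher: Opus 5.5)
Your proposal is correct. Steps 1 and 3 match the paper's proof in substance, with two small differences. The paper checks $g+h=G$ uniformly, via $g+h=2+\frac np+\frac nq=n+1$, rather than case by case. It also cites Theorem~\ref{thm:nogo} for $\pi_1(Z)\ne1$, whereas your remark that $H_1(Z;\mathbb Z)\ne0$ already forces $\pi_1(Z)\ne1$ suffices for the corollary as stated and avoids boundary incompressibility.

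The genuine difference is Step 2. The paper never produces an isotropic class. Poincar\'e duality and the universal coefficient theorem make the form on $H_2(Z;\mathbb Z)/\operatorname{Tor}$ unimodular, so with rank two and signature zero its determinant is $-1$. In your notation this means $ab$ is a rational square, so the rational form is hyperbolic. What you flag as the main obstacle is therefore dispatched in one line by unimodularity. Two wording points: ``show it is even'' is a misnomer, since parity is an integral notion and the odd form $\langle1\rangle\oplus\langle-1\rangle$ is just as hyperbolic over $\mathbb Q$ as $H$. Also, nondegeneracy of the rational form comes from Poincar\'e duality, not from $\sigma=0$.

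Your geometric route does go through. The boundary of the $q$ punctured fibers on the degree-$p$ side consists of $pq$ meridian circles, and so does that of the $p$ punctured fibers on the degree-$q$ side. Since the gluing reverses the normal-circle orientation, the two boundary classes cancel in $H_1(\Sigma_G\times S^1)$. They therefore cobound a $2$-chain $D$ in the gluing hypersurface, which gives a closed cycle $C$.

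Now push $\Sigma_G$ slightly into the collar on the degree-$p$ side. This keeps it disjoint from $D$ and from the other side, and it meets the punctured fibers in $pq$ points of one sign, so $[\Sigma_G]\cdot[C]=\pm pq$. Thus $[\Sigma_G]$ is a nonzero isotropic vector, and a nondegenerate rank-two rational form with such a vector is hyperbolic.

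Your approach buys explicit rational generators of $H_2(Z;\mathbb Q)$, namely the glued surface and a glued fiber class. The paper's argument is shorter and uses nothing about the construction beyond $b_1=0$, $e=4$, and $\sigma=0$.
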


\begin{proof}
The equation $e=4$ gives
\[
 g+h=2+\frac np+\frac nq=n+1=G.
\]
Thus Theorem~\ref{thm:general-rational} applies with equality and yields
$b_1(Z)=0$ over $\mathbb Q$.  Since $e(Z)=4$ and $\sigma(Z)=0$,
Poincare duality gives $b_2=2$ and $b_2^+=b_2^-=1$.  By Poincare duality and the universal coefficient theorem, the
intersection pairing on $H_2(Z;\mathbb Z)/\operatorname{Tor}$ is
unimodular.  It has rank two and signature zero, so it is either the even
hyperbolic form $H$ or the odd form
$\langle1\rangle\oplus\langle-1\rangle$.  Both become hyperbolic over
$\mathbb Q$.  Since $H^1(Z;\mathbb Q)=H^3(Z;\mathbb Q)=0$, this
identifies the full graded rational cohomology ring with that of
$S^2\times S^2$.  The fundamental-group and integral
homology assertions follow from Theorem~\ref{thm:nogo} and
Proposition~\ref{prop:general-H1}.
\end{proof}

\begin{theorem}[Infinitely many first homology groups and diffeomorphism types]
\label{thm:infinitefamilies}
Fix any one of the three numerical cases in
Theorem~\ref{thm:classification}.  There is a sequence of
adapted product-framed symplectic gluings $\{\phi_k\}$ such that
\[
 H^*(Z_{\phi_k};\mathbb Q)
 \cong
 H^*(S^2\times S^2;\mathbb Q)
\]
for every $k$, while the finite groups
$H_1(Z_{\phi_k};\mathbb Z)$ have unbounded order.  In particular, the
groups $\pi_1(Z_{\phi_k})$ are pairwise nonisomorphic after passing to a
subsequence, and the manifolds $Z_{\phi_k}$ are pairwise
nondiffeomorphic.
\end{theorem}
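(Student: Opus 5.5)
The plan is to start from one adapted product-framed gluing $\phi_0(x,e^{it})=(f_0(x),e^{-it})$ for which the square matrix $\Theta_{f_0}$ of Proposition~\ref{prop:productH1} is invertible over $\mathbb Q$. Such an $f_0$ exists by Theorem~\ref{thm:general-rational}, since in all three cases $G=g+h$. We then modify $f_0$ by powers of a single Dehn twist. Precompose $f_0$ with $\tau_c^k$, where $\tau_c$ is a Dehn twist on $\Sigma_G$ about a simple closed curve $c$; on homology it acts by the symplectic transvection $T_c^k(x)=x+k\langle c,x\rangle c$. The gluing $\phi_k(x,e^{it})=(f_0\tau_c^k(x),e^{-it})$ is still product-framed with respect to the same adapted framings, so it is an adapted product-framed gluing. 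Each $Z_{\phi_k}$ is symplectic by Gompf's symplectic sum, since both surfaces have square zero and the gluing reverses the normal-circle orientation. By Proposition~\ref{prop:productH1},
\[
 |H_1(Z_{\phi_k};\mathbb Z)|=\gcd(p,q)\,|\det\Theta_{f_0\tau_c^k}|
\]
whenever this determinant is nonzero.

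The key computation is that $\det\Theta_{f_0 T_c^k}$ is an affine function of $k$. The matrix $\Theta_{f_0T_c^k}$ equals $\Theta_{f_0}\circ(\text{restricted change})$; more precisely, $\Theta_{f_0T_c^k}$ differs from $\Theta_{f_0}$ only in its second block, via $-q_{q*}f_{0*}T_c^k$. Writing $\Theta_{f_0T_c^k}=\Theta_{f_0}+k\,(0,-q_{q*}f_{0*}c)^{\!\top}\langle c,\cdot\rangle$ shows it is a rank-one perturbation of $\Theta_{f_0}$. The matrix determinant lemma then gives
\[
 \det\Theta_{f_0T_c^k}=\det\Theta_{f_0}\,\bigl(1+k\lambda\bigr),\qquad
 \lambda=\bigl\langle c,\ \Theta_{f_0}^{-1}(0,-q_{q*}f_{0*}c)\bigr\rangle\in\mathbb Q .
\]
So once $\lambda\neq0$, the determinant is nonzero for all but at most one $k$ and $|\det|\to\infty$. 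Rational cohomology is then that of $S^2\times S^2$ exactly as in the proof of Corollary~\ref{cor:classifiedQ}, because $b_1=0$, $e=4$ and $\sigma=0$.

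The main obstacle is choosing $c$ so that $\lambda\ne0$. Write $w=\Theta_{f_0}^{-1}(0,q_{q*}f_{0*}c)\in H_1(\Sigma_G;\mathbb Q)$; then $w$ is the unique class with $q_{p*}w=0$ and $q_{q*}f_{0*}w=q_{q*}f_{0*}c$. We need $\langle c,w\rangle\ne0$, and I would look for $c$ with $[c]\in\ker q_{p*}$. Then $w=c$ solves both equations, so uniqueness gives $w=c$ and $\lambda=-\langle c,c\rangle=0$, which is bad. So $c$ must lie outside $\ker q_{p*}$. Viewed as a function of the class $[c]$, $\lambda$ is a quadratic form $Q(x)=\langle x,Px\rangle$ with $P=\Theta_{f_0}^{-1}\iota q_{q*}f_{0*}$, where $\iota$ is inclusion into the second summand. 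I would show that $Q$ is not identically zero: otherwise $P$ would be skew-adjoint with respect to $\langle\,,\rangle$, and I would try to contradict this using the fact that $P$ is a projection with kernel $\ker(q_{q*}f_{0*})$ and image $\ker q_{p*}$, both symplectic subspaces of complementary dimension by Lemma~\ref{lem:symplectic-kernel}. A skew-adjoint idempotent must vanish, since $P=P^2=-P^*P$ forces $\langle Px,Px\rangle$-type identities and hence $P=0$; but $P\neq0$ because it has image $\ker q_{p*}\ne0$. Since $Q\not\equiv0$, some primitive integral class $x$ has $Q(x)\ne0$, by density of primitive vectors. Every primitive class is represented by a simple closed curve $c$, which finishes this step.

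Finally, the $|H_1(Z_{\phi_k};\mathbb Z)|$ are finite and unbounded, so after passing to a subsequence they are pairwise distinct. Hence the abelianizations, and therefore the groups $\pi_1(Z_{\phi_k})$, are pairwise nonisomorphic, and the manifolds are pairwise nondiffeomorphic.
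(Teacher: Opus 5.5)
The gap is the claim that $Q$ cannot vanish identically. Your form $\langle\,,\rangle$ is alternating, so polarizing $\langle x,Px\rangle\equiv0$ gives
\[
 \langle x,Py\rangle=-\langle y,Px\rangle=\langle Px,y\rangle .
\]
So $P$ would be \emph{self}-adjoint, not skew-adjoint. Self-adjoint idempotents certainly exist: the projection onto a symplectic subspace along its symplectic complement is one.

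Concretely, put $K_A=\ker q_{p*}$ and $K'=\ker(q_{q*}f_{0*})$. Invertibility of $\Theta_{f_0}$ gives $V=K_A\oplus K'$. Write $c=c_1+c_2$ accordingly. Then $\Theta_{f_0}^{-1}(0,-q_{q*}f_{0*}c)=c_1$, so
\[
 \lambda(c)=\langle c,c_1\rangle=\langle c_2,c_1\rangle .
\]
Hence $\lambda\equiv0$ exactly when $K'\subseteq K_A^{\perp}$. Since $\dim K'=2g=\dim K_A^{\perp}$, this means exactly $K'=K_A^{\perp}$.

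This is not a degenerate corner case. It is precisely the automorphism written down in the proofs of Lemma~\ref{lem:symp-general-position} (equality case) and Lemma~\ref{lem:linear-gluing}, which carry one kernel onto the symplectic complement of the other. Your argument is pure rational linear algebra, so it would apply verbatim there, where its conclusion is false. Nothing in Theorem~\ref{thm:general-rational} prevents the integral $f_0$ it supplies from lying on this locus. If it does, every family $f_0\tau_c^k$ has constant $|H_1|$.

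The repair is short. If $K'=K_A^{\perp}$, pick a primitive $d\notin K_A\cup K_A^{\perp}$ and replace $f_0$ by $f_0\tau_d$. Since $\lambda(d)=0$, you get $\det\Theta_{f_0\tau_d}=\det\Theta_{f_0}\neq0$. The new kernel is $\tau_d^{-1}K_A^{\perp}=(\tau_d^{-1}K_A)^{\perp}$, which differs from $K_A^{\perp}$: $\tau_d$ moves any $x\in K_A$ with $\langle x,d\rangle\neq0$ off $K_A$. Equivalently, you can impose $K'\neq K_A^{\perp}$ as a second nonempty Zariski-open condition before invoking Borel density. Then $Q\not\equiv0$, homogeneity of $Q$ lets you take the integral class primitive, and the rest of your argument goes through, including the explicit formula $\det\Theta_{f_0}(1+k\lambda)$.

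The paper avoids analysing $\lambda$ altogether. It exhibits $T_-$ with $\Delta(T_-)=0$, by sending a primitive vector of one kernel to one of the other, and $T_+$ with $\Delta(T_+)\neq0$. It then factors $T_+T_-^{-1}$ into Dehn-twist transvections and observes that some single transvection must change $\Delta$; along the powers of that transvection $\Delta$ is affine and nonconstant. Your repaired route gives a concrete criterion for which twist curves work. The paper's route gives existence without needing any structural information about how the two kernels sit relative to each other.
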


\begin{proof}
Write
\[
 A=q_{p*},\qquad B=q_{q*},
\]
and, for $T\in\operatorname{Sp}(2G,\mathbb Z)$, set
\[
 \Delta(T)=
 \det
 \begin{pmatrix}
 A\\
 BT
 \end{pmatrix}.
\]
Because $G=g+h$ in the three $e=4$ cases, this is the determinant of a
square $2G\times2G$ matrix.  The rational transversality argument in
Theorem~\ref{thm:general-rational} shows that $\Delta$ is not identically
zero on the symplectic group.

For the integral argument,
Theorem~\ref{thm:general-rational} gives
$T_+\in\operatorname{Sp}(2G,\mathbb Z)$ with $\Delta(T_+)\ne0$.
On the other hand, the integral kernels of $A$ and $B$ are primitive
sublattices: the quotient by either kernel identifies with the image of
the corresponding covering push-forward, which is a free abelian
group.  Thus both kernels contain primitive integral vectors.  Choose
primitive $x\in\ker A$ and $y\in\ker B$.  The integral symplectic group
acts transitively on primitive vectors, so there is
$T_-\in\operatorname{Sp}(2G,\mathbb Z)$ with $T_-x=y$.  Then
\[
 x\in\ker A\cap T_-^{-1}\ker B,
\]
so $\Delta(T_-)=0$.

Set
\[
 S=T_+T_-^{-1}\in\operatorname{Sp}(2G,\mathbb Z).
\]
The mapping class group surjects onto
$\operatorname{Sp}(2G,\mathbb Z)$ and is generated by Dehn twists, whose
actions on first homology are symplectic transvections
\cite{FarbMargalit}.  Choose a mapping class lifting $S$ and factor it
as a product of Dehn twists.  Applying the corresponding transvections
successively to $T_-$ gives a finite sequence beginning at $T_-$ and
ending at $T_+$.  Since $\Delta(T_-)=0$ and $\Delta(T_+)\ne0$, at some
adjacent step, after interchanging the two matrices in that pair if
necessary, there are an integral symplectic matrix $T_0$ and a primitive
integral vector $v$ for which
\[
 \Delta(T_0)\ne\Delta(\tau_vT_0).
\]
It follows that the restriction of $\Delta$ to
\[
 T_k=\tau_v^kT_0,\qquad k\in\mathbb Z,
\]
is nonconstant.  Here
\[
 \tau_v(x)=x+\langle x,v\rangle v
\]
is the symplectic transvection associated to $v$.  Since
$\tau_v^k-I$ has rank one and depends linearly on $k$, the matrix
\[
 \begin{pmatrix}A\\BT_k\end{pmatrix}
\]
is a rank-one linear perturbation of the matrix for $T_0$.  By
multilinearity of the determinant, a rank-one perturbation contributes
at most one perturbed column to a nonzero determinant term; hence
\[
 \Delta(T_k)=a k+b
\]
for some integers $a,b$.  The inequality
$\Delta(T_0)\ne\Delta(T_1)$ gives $a\ne0$.

For all but at most one integer $k$, $\Delta(T_k)\ne0$.  Hence the
corresponding gluing has $H_1(-;\mathbb Q)=0$, and the geography
calculation gives the rational cohomology ring of
$S^2\times S^2$.  By Proposition~\ref{prop:productH1},
\[
 |H_1(Z_{\phi_k};\mathbb Z)|
 =
 \gcd(p,q)\,|ak+b|,
\]
which is unbounded.

The mapping class group surjects onto
$\operatorname{Sp}(2G,\mathbb Z)$ \cite{FarbMargalit}, so the $T_k$ are
realized by surface diffeomorphisms.  After isotopy, these can be taken area preserving, and
Gompf's symplectic sum construction therefore gives symplectic
$Z_{\phi_k}$.  Finally, diffeomorphic manifolds have isomorphic first
homology, and isomorphic fundamental groups have isomorphic
abelianizations.  Passing to a subsequence with pairwise distinct orders
of $H_1$ proves both assertions.
\end{proof}

\begin{remark}
The theorem gives an elementary way of distinguishing infinitely many
members of the braided family: no Seiberg--Witten calculation is needed.
The distinction is already visible in the integral first homology.  This
does not address the more delicate problem of producing infinitely many
smooth structures with a fixed integral homology group.
\end{remark}

\section{The basic $(2,3)$ example}\label{sec:geography}

Choose an orientation-reversing bundle diffeomorphism
\[
 \phi:\partial\nu F_2\longrightarrow\partial\nu F_3
\]
and set
\[
 Z_\phi=
 (\Sigma_4\times S^2\setminus\Int\nu F_2)
 \cup_\phi
 (\Sigma_3\times S^2\setminus\Int\nu F_3).
\]

\begin{proposition}\label{prop:geography}
For every choice of $\phi$,
\[
 e(Z_\phi)=4,\qquad \sigma(Z_\phi)=0.
\]
\end{proposition}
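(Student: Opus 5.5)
The plan is to specialize Proposition~\ref{prop:general-geography} to $(p,q;g,h)=(2,3;4,3)$, and also to spell out the two ingredients directly so the computation does not depend on the general statement. The genus-matching condition \eqref{eq:genusmatch} holds, since $2(4-1)=3(3-1)=6=n$, so Corollary~\ref{cor:two} says the gluing surface has genus $G=7$.

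\emph{Euler characteristic.} I would use additivity of $e$ under gluing along closed $3$-manifolds, which have Euler characteristic zero. Removing $\nu F$, a disk bundle over $F$ that is homotopy equivalent to $F$, gives $e(X\setminus\Int\nu F)=e(X)-e(F)$. Here $e(\Sigma_4\times S^2)=2(-6)=-12$, $e(\Sigma_3\times S^2)=2(-4)=-8$, and $e(F_2)=e(F_3)=2-14=-12$. Hence
\[
 e(Z_\phi)=(-12+12)+(-8+12)=4.
\]
This holds for every $\phi$, because only the Euler characteristics of the pieces enter. Equivalently, $e=4n(1-\tfrac12-\tfrac13)=4$.

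\emph{Signature.} I would apply Novikov additivity to the decomposition along the closed $3$-manifold $\partial\nu F_2\cong F_2\times S^1$. This gives $\sigma(Z_\phi)=\sigma(A)+\sigma(B)$, where $A,B$ are the two complements. Applying Novikov additivity again to $\Sigma_4\times S^2=A\cup\nu F_2$ gives $0=\sigma(A)+\sigma(\nu F_2)$. Since $F_2^2=0$, the intersection form of the disk bundle $\nu F_2$ is the $1\times1$ matrix $(0)$, so $\sigma(\nu F_2)=0$ and therefore $\sigma(A)=0$. The same argument with $F_3^2=0$ gives $\sigma(B)=0$, so $\sigma(Z_\phi)=0$, again independently of $\phi$.

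The only point needing care is the sign convention in the signature step. The boundary orientations must match, which the orientation-reversing condition on $\phi$ guarantees. Beyond that, the sum is a routine instance of Gompf's formulas.
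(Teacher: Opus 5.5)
Your proposal is correct and follows the paper's proof. The paper computes $e(Z_\phi)=e(\Sigma_4\times S^2)+e(\Sigma_3\times S^2)-2e(\Sigma_7)=-12-8+24=4$ from the fiber-sum formula, which your excision count reproduces, and it gets $\sigma(Z_\phi)=0$ from Novikov additivity. Your extra step, showing $\sigma(A)=\sigma(B)=0$ because $\sigma(\nu F)=0$ when $F^2=0$, makes explicit what the paper's one-line signature argument leaves implicit.
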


\begin{proof}
We have
\[
 e(\Sigma_g\times S^2)=(2-2g)\cdot2=4-4g.
\]
Thus
\[
 e(\Sigma_4\times S^2)=-12,\qquad
 e(\Sigma_3\times S^2)=-8.
\]
Since $e(\Sigma_7)=2-14=-12$, the fiber-sum formula gives
\[
 e(Z_\phi)
 =-12-8-2(-12)=4.
\]
Both products have signature zero.  Novikov additivity therefore gives
\[
 \sigma(Z_\phi)=0.
\]
\end{proof}

\begin{remark}
If $Z_\phi$ were simply connected, then $b_2(Z_\phi)=2$ and
$b_2^+=b_2^-=1$.  One would still need to determine parity: the even
intersection form is $H$, while the odd form is
$\langle1\rangle\oplus\langle-1\rangle$.  The next sections show that the
simple-connectivity hypothesis already fails for the direct multisection
sum.
\end{remark}

\section{Complement groups}\label{sec:complements}

We compute the complements of the cyclic multisections using a bundle
description that will also be used for the incompressibility theorem in
Section~\ref{sec:obstruction}.

Let
\[
 M_{g,p}=(\Sigma_g\times S^2)\setminus\Int\nu F_{g,p}(u).
\]
We choose the tubular neighborhood $\nu F_{g,p}(u)$ fiberwise, so that
its intersection with each sphere fiber is a union of $p$ small disks
centered at the points of the multisection.  For each $x\in\Sigma_g$,
the fiber of the projection
$M_{g,p}\to\Sigma_g$ is
\[
 P_p=S^2\setminus\bigcup_{j=1}^p\Int D_j,
\]
a sphere with $p$ open disks removed.  The cyclic motion of the roots gives
the monodromy.

Choose standard generators
\[
 \pi_1(\Sigma_g)=
 \left\langle
 a_1,b_1,\ldots,a_g,b_g\ \middle|\
 \prod_{i=1}^g[a_i,b_i]=1
 \right\rangle
\]
so that
\[
 u_*(a_1)=1,\qquad
 u_*(b_1)=u_*(a_i)=u_*(b_i)=0\quad(i\ge2).
\]
Let $r:P_p\to P_p$ be the rotation that cyclically permutes the $p$
boundary components.

The north pole $N$ is fixed by this rotation and is disjoint from all
punctures.  Hence
\[
 x\longmapsto(x,N)
\]
is a section of $M_{g,p}\to\Sigma_g$.  It follows that the fundamental
group extension splits.

\begin{proposition}\label{prop:generalpi1}
There is a split exact sequence
\[
 1\longrightarrow\pi_1(P_p)
 \longrightarrow\pi_1(M_{g,p})
 \longrightarrow\pi_1(\Sigma_g)
 \longrightarrow1,
\]
and
\[
 \pi_1(M_{g,p})
 \cong
 \pi_1(P_p)\rtimes\pi_1(\Sigma_g),
\]
where $a_1$ acts by $r_*$ and all the other standard base generators act
trivially on $\pi_1(P_p)$.
\end{proposition}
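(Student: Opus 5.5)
The plan is to realize $M_{g,p}\to\Sigma_g$ as a fiber bundle with fiber $P_p$, read off its monodromy from the defining equation $z^p=u(x)$, and then apply the long exact homotopy sequence together with the section $x\mapsto(x,N)$.

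\emph{Bundle structure.} Since the tubular neighborhood was chosen fiberwise, the projection $M_{g,p}\to\Sigma_g$ is a locally trivial bundle with fiber $P_p$. Local triviality holds because $F_{g,p}(u)\to\Sigma_g$ is an unbranched covering. Over a small disk $U$, the $p$ roots are $\zeta^j w(x)$, where $w$ is a local continuous $p$-th root of $u$ and $\zeta=e^{2\pi i/p}$. Rotating $S^2$ about the polar axis by $w(x)$ carries the standard configuration to the one over $x$. This trivializes the bundle over $U$ while fixing $N$ and the south pole.

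\emph{Exact sequence.} The homotopy long exact sequence gives
\[
 \pi_2(\Sigma_g)\to\pi_1(P_p)\to\pi_1(M_{g,p})\to\pi_1(\Sigma_g)\to\pi_0(P_p).
\]
Since $g\ge1$, $\pi_2(\Sigma_g)=0$. Since $P_p$ is connected, the last term is trivial. This yields the short exact sequence. The section $s(x)=(x,N)$ lies in $M_{g,p}$ because $N$ is off the equator and hence off the disks. Therefore $s_*$ splits the sequence, and $\pi_1(M_{g,p})\cong\pi_1(P_p)\rtimes\pi_1(\Sigma_g)$. The action is conjugation by $s_*(\gamma)$, taking the basepoint of $P_p$ to be $N$.

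\emph{Monodromy (main step).} Transporting along a loop $\gamma$ with the rotation trivializations above, the accumulated rotation is by a continuous choice of $w$ along $\gamma$. After one circuit, $w$ returns multiplied by $\zeta^{u_*(\gamma)}$, so the monodromy is $r^{u_*(\gamma)}$, where $r$ is rotation by $2\pi/p$. Because these rotations fix $N$ and the section is horizontal for this connection, conjugation by $s_*(\gamma)$ acts on $\pi_1(P_p,N)$ exactly as $r_*^{u_*(\gamma)}$, with no extra inner automorphism. With the chosen basis, $u_*(a_1)=1$ and $u_*$ vanishes on the other generators. Hence $a_1$ acts by $r_*$ and the others act trivially.

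The delicate point is this identification of the action with $r_*$ on the nose rather than up to inner automorphism. I would handle it by noting that the connection is built from rotations preserving $N$, so the section is flat and the holonomy is a genuine basepoint-preserving homeomorphism.
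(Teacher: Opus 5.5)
Your proposal is correct and follows essentially the same route as the paper. Both use the homotopy exact sequence with $\pi_2(\Sigma_g)=0$, the splitting by the north-pole section $x\mapsto(x,N)$, and the monodromy $r^{u_*(\gamma)}$ read off from the cyclic motion of the roots of $z^p=u(x)$; your observation that the rotational trivializations make the bundle flat with a horizontal section, so that conjugation by $s_*(a_1)$ is $r_*$ on the nose rather than only up to an inner automorphism, spells out a point the paper's short proof leaves implicit.
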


\begin{proof}
The homotopy exact sequence of the fiber bundle gives the extension, because
$\pi_2(\Sigma_g)=0$.  The north-pole section splits it.  By construction the
configuration of punctures rotates once by $2\pi/p$ when $u$ winds once
around $S^1$, and is stationary for loops on which $u_*$ vanishes.  This is
exactly the stated action.
\end{proof}

\subsection{The degree-two complement}

For $p=2$, $P_2$ is an annulus.  Let $\mu$ generate
$\pi_1(P_2)\cong\mathbb Z$.  The rotation exchanging the two boundary
components reverses the core orientation, so
\[
 r_*(\mu)=\mu^{-1}.
\]

\begin{corollary}\label{cor:Agroup}
For
\[
 A=(\Sigma_4\times S^2)\setminus\Int\nu F_2
\]
one has
\[
\begin{aligned}
 \pi_1(A)=
 \langle\,&a_1,b_1,\ldots,a_4,b_4,\mu\mid\\
 &\prod_{i=1}^4[a_i,b_i]=1,\quad
 a_1\mu a_1^{-1}=\mu^{-1},\\
 &[b_1,\mu]=[a_i,\mu]=[b_i,\mu]=1,\quad i=2,3,4
 \rangle .
\end{aligned}
\]
Moreover
\[
 H_1(A;\mathbb Z)\cong\mathbb Z^8\oplus\mathbb Z/2.
\]
\end{corollary}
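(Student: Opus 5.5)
The plan is to specialize Proposition~\ref{prop:generalpi1} to $(g,p)=(4,2)$ and then abelianize the resulting presentation.

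\emph{The presentation.} By Proposition~\ref{prop:generalpi1}, $\pi_1(A)\cong\pi_1(P_2)\rtimes\pi_1(\Sigma_4)$, where $\pi_1(P_2)=\langle\mu\rangle\cong\mathbb Z$ because $P_2$ is an annulus. Only $a_1$ acts nontrivially, and it acts by $r_*$. The rotation $r$ exchanges the two boundary circles of the annulus. It is an orientation-preserving self-map of $S^2$ fixing $N$ and the antipodal point, so it rotates the annulus by angle $\pi$ around these fixed points. I need to know how this rotation acts on the core loop $\mu$.

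Here I must be careful. A rotation of the sphere about the polar axis preserves the orientation of any latitude circle. The core of $P_2$ is not such a latitude, though: the two punctures lie on the equator at antipodal points $\pm z_0$. The annulus $S^2\setminus(D_{z_0}\cup D_{-z_0})$ has as its core a loop separating $z_0$ from $-z_0$. One such loop is the great circle through $N$, $S$, and the points $\pm iz_0$. The half-turn about the $N$--$S$ axis maps this great circle to itself and reverses its orientation, since it swaps $iz_0$ and $-iz_0$ while fixing $N$ and $S$. Hence $r_*(\mu)=\mu^{-1}$, as stated just before the corollary.

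The one subtlety is basepoints. The base point $N$ lies on this great circle and is fixed by $r$, so the action is well defined on the nose and not merely up to conjugation, and the section through $N$ gives the splitting. The semidirect product then has the displayed presentation: the surface relation, the relation $a_1\mu a_1^{-1}=\mu^{-1}$, and commutation of $\mu$ with the remaining seven generators.

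\emph{Abelianization.} In $H_1$ the commutator relations become trivial. The surface relation becomes trivial as well. The relation $a_1\mu a_1^{-1}=\mu^{-1}$ becomes $\mu=-\mu$, that is, $2\mu=0$. So $H_1(A)$ is the free abelian group on $a_1,\dots,b_4$ plus $\mu$, modulo $2\mu=0$. This gives $\mathbb Z^8\oplus\mathbb Z/2$.

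\emph{A consistency check.} I would also confirm that $\mu$ has order exactly two and is not trivial. Abelianizing the split extension gives $H_1(A)=H_1(P_2)_{\pi_1(\Sigma_4)}\oplus H_1(\Sigma_4)$. The coinvariants are $\mathbb Z/(r_*-1)=\mathbb Z/2$. This is consistent with the general mechanism in Proposition~\ref{prop:general-H1}: the two meridians are identified by the monodromy and are related by $\mu_1+\mu_2=0$ in the annulus.

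\emph{Main obstacle.} The only real difficulty is the orientation-reversal claim $r_*(\mu)=\mu^{-1}$, together with the basepoint bookkeeping. Once the great circle through $N$ is used as the core, this is immediate.
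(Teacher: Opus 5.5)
Your proposal is correct and follows the same route as the paper: specialize Proposition~\ref{prop:generalpi1} to $(g,p)=(4,2)$ with $r_*(\mu)=\mu^{-1}$, then abelianize so that the relation $a_1\mu a_1^{-1}=\mu^{-1}$ becomes $2\mu=0$. Your great-circle argument makes explicit the paper's one-line reason that a half-turn exchanging the two boundary circles of the annulus reverses the core; the basepoint care is harmless but not needed, since $\pi_1(P_2)\cong\mathbb Z$ is abelian and so conjugation does not affect the action.
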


\begin{proof}
Only the homology calculation requires comment.  After abelianization the
relation $a_1\mu a_1^{-1}=\mu^{-1}$ becomes $2\mu=0$, and no further
relation is imposed on the eight base generators.
\end{proof}

\subsection{The degree-three complement}

For $p=3$, let $\mu_1,\mu_2,\mu_3$ be positively oriented peripheral
loops in the pair of pants $P_3$, chosen so that
\[
 \mu_1\mu_2\mu_3=1.
\]
Choose the basepoint on the rotation axis.  The rotation can then be
represented so that
\[
 r_*(\mu_1)=\mu_2,\qquad
 r_*(\mu_2)=\mu_3,\qquad
 r_*(\mu_3)=\mu_1.
\]

\begin{corollary}\label{cor:Bgroup}
For
\[
 B=(\Sigma_3\times S^2)\setminus\Int\nu F_3
\]
one has the presentation
\[
\begin{aligned}
 \pi_1(B)=\langle\,
 &c_1,d_1,c_2,d_2,c_3,d_3,\mu_1,\mu_2,\mu_3\mid\\
 &[c_1,d_1][c_2,d_2][c_3,d_3]=1,\quad
 \mu_1\mu_2\mu_3=1,\\
 &c_1\mu_1c_1^{-1}=\mu_2,\quad
 c_1\mu_2c_1^{-1}=\mu_3,\quad
 c_1\mu_3c_1^{-1}=\mu_1,\\
 &[x,\mu_j]=1
 \quad\text{for }x\in\{d_1,c_2,d_2,c_3,d_3\},\ j=1,2,3
 \,\rangle .
\end{aligned}
\]
Furthermore
\[
 H_1(B;\mathbb Z)\cong\mathbb Z^6\oplus\mathbb Z/3.
\]
\end{corollary}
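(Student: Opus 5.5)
The presentation of $\pi_1(B)$ comes from Proposition~\ref{prop:generalpi1} with $(g,p)=(3,3)$; the homology is then a direct abelianization. The semidirect product $\pi_1(P_3)\rtimes\pi_1(\Sigma_3)$ is presented by three pieces of data. The generators are those of $\pi_1(P_3)$ together with the base generators $c_1,d_1,\dots,c_3,d_3$. The relations are those of $\pi_1(P_3)$ and of $\pi_1(\Sigma_3)$, lifted through the north-pole section. Finally there are the conjugation relations $x\mu_jx^{-1}=\varphi(x)(\mu_j)$ for each base generator $x$.

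First I present $\pi_1(P_3)$. The pair of pants is a thrice-punctured sphere, so $\pi_1(P_3)$ is free of rank two. With the three peripheral loops chosen as above, it has the redundant presentation $\langle\mu_1,\mu_2,\mu_3\mid\mu_1\mu_2\mu_3=1\rangle$. The surface relation $[c_1,d_1][c_2,d_2][c_3,d_3]=1$ holds exactly in $\pi_1(B)$ and not merely modulo the fiber group. This is because the section $x\mapsto(x,N)$ is a genuine map $\Sigma_3\to B$, so the lifted generators satisfy the surface relation on the nose.

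Next comes the action. By Proposition~\ref{prop:generalpi1}, $c_1$ acts by $r_*$ and all other generators act trivially. The point needing care is that $r_*$ is an honest automorphism of $\pi_1(P_3)$, not just an outer automorphism. Since the basepoint $N$ lies on the rotation axis and $r$ fixes it, $r_*$ is well defined on $\pi_1(P_3,N)$. The peripheral loops, built from arcs out of $N$, can be chosen $r$-equivariantly: take one loop and define the others as its rotates. Then $r_*$ cyclically permutes them as stated. I also need to check that this choice is compatible with $\mu_1\mu_2\mu_3=1$. Going once around $N$ in the sphere, the rotated loops appear in cyclic order and their product bounds a disk around the south pole, which is null-homotopic in $P_3$ once we use the correct orientation convention. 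I expect this ordering/orientation check to be the only delicate step; if it fails, I would relabel $r_*(\mu_1)=\mu_3$, which does not affect the homology.

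Finally, to abelianize, I impose commutativity on the relations. The conjugation relations give $\mu_1=\mu_2=\mu_3=:\mu$, and then $\mu_1\mu_2\mu_3=1$ becomes $3\mu=0$. The surface relation and the trivial-action relations disappear. The result is $\mathbb Z^6\oplus\mathbb Z/3$, generated by the six base classes together with $\mu$.
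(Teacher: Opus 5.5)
Your proposal is correct and follows the paper's route: you read off the semidirect-product presentation from Proposition~\ref{prop:generalpi1} with $(g,p)=(3,3)$ and then abelianize, where the conjugation relations identify $\mu_1=\mu_2=\mu_3$ and the pair-of-pants relation gives $3\mu=0$. Your extra care about the basepoint $N$ on the rotation axis and the $r$-equivariant choice of peripheral loops fills in what the paper compresses into ``the rotation can be represented so that''. The fallback relabeling is never needed: if the cyclic order comes out reversed, replacing every $\mu_j$ by $\mu_j^{-1}$ (equivalently, reversing the orientation convention) restores exactly the stated relations.
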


\begin{proof}
The presentation follows from Proposition~\ref{prop:generalpi1}.  In the
abelianization the three peripheral classes become equal:
\[
 [\mu_1]=[\mu_2]=[\mu_3]=:\mu.
\]
The pair-of-pants relation becomes $3\mu=0$.  The six base generators
remain free.
\end{proof}

\begin{remark}
The torsion summands $\mathbb Z/2$ and $\mathbb Z/3$ in the two
abelianizations are useful consistency checks, but they do not imply that
the meridians have finite order in the nonabelian complement groups.  In
fact the fiber groups inject in both cases, so the peripheral elements have
infinite order.
\end{remark}

\section{Boundary homology and the Mayer--Vietoris map}\label{sec:mv}

Let $F=F_{g,p}(u)$ be one of the cyclic multisections from
Section~\ref{sec:cyclic}, and let
\[
 M=M_{g,p}=(\Sigma_g\times S^2)\setminus\operatorname{int}\nu F.
\]
Since $F^2=0$, after choosing a normal framing we identify
$\partial\nu F\cong F\times S^1$.  Let $m$ denote the normal-circle class.

The projection $M\to\Sigma_g$ induces a split epimorphism on first
homology.  The fiber contribution is the cyclic meridional torsion
computed in Section~\ref{sec:complements}, and hence
\[
 H_1(M;\mathbb Z)\cong H_1(\Sigma_g;\mathbb Z)\oplus\mathbb Z/p.
\]
Let $q:F\to\Sigma_g$ be the $p$-fold covering.

\begin{lemma}\label{lem:image-cover}
For the cyclic cover determined by the primitive class
$\xi=[u]\in H^1(\Sigma_g;\mathbb Z)$,
\[
 q_*H_1(F;\mathbb Z)
 =
 \ker\!\left(
 H_1(\Sigma_g;\mathbb Z)
 \xrightarrow{\ \langle\xi,\cdot\rangle\ }
 \mathbb Z\longrightarrow\mathbb Z/p
 \right).
\]
In particular,
\[
 H_1(\Sigma_g;\mathbb Z)/q_*H_1(F;\mathbb Z)\cong\mathbb Z/p.
\]
\end{lemma}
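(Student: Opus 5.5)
The proof will go through covering space theory at the level of fundamental groups and then abelianize. By construction (Proposition~\ref{prop:cyclicsurface}) $F=F_{g,p}(u)$ is the pullback of $\rho_p:z\mapsto z^p$ along $u$. The first step is to identify $q_*\pi_1(F)\subset\pi_1(\Sigma_g)$. The pullback of a connected cover $\rho_p$ with $\rho_{p*}\pi_1(S^1)=p\mathbb Z$ is the connected cover with
\[
 q_*\pi_1(F)=u_*^{-1}(p\mathbb Z)=\ker\bigl(\pi_1(\Sigma_g)\xrightarrow{u_*}\mathbb Z\to\mathbb Z/p\bigr).
\]
The justification is the lifting criterion: a loop $\gamma$ in $\Sigma_g$ lifts to a closed loop in $F$ exactly when $u\circ\gamma$ lifts to a closed loop under $\rho_p$, i.e.\ when $\deg(u\circ\gamma)=\langle\xi,[\gamma]\rangle\equiv0\pmod p$. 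Connectedness was already established in Proposition~\ref{prop:cyclicsurface}.

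Next I will pass to homology. Let $K=\ker(\pi_1(\Sigma_g)\to\mathbb Z/p)$, and let $h:\pi_1(\Sigma_g)\to H_1(\Sigma_g)$ be the Hurewicz map. Then $q_*H_1(F)=h(q_*\pi_1(F))=h(K)$, since the Hurewicz map for $F$ is onto and commutes with $q_*$. The homomorphism $\pi_1(\Sigma_g)\to\mathbb Z/p$ factors through $h$, because $\mathbb Z/p$ is abelian, via $\bar\chi(x)=\langle\xi,x\rangle \bmod p$. The inclusion $h(K)\subseteq\ker\bar\chi$ is immediate. For the reverse inclusion, take $x\in\ker\bar\chi$ and choose any preimage $\gamma$ with $h(\gamma)=x$. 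Then $\chi(\gamma)=\bar\chi(x)=0$, so $\gamma\in K$ and $x\in h(K)$. This proves the displayed equality.

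For the quotient, the map $\bar\chi:H_1(\Sigma_g)\to\mathbb Z/p$ is onto because $\xi$ is primitive: some class $x$ has $\langle\xi,x\rangle=1$, for instance $a_1$ with the normalization of Section~\ref{sec:complements}. Hence $H_1(\Sigma_g)/q_*H_1(F)\cong\mathbb Z/p$.

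I do not expect a serious obstacle. The only point needing care is the identification of the subgroup of the pulled-back cover. One must check that the defining map of the pullback cover equals the composite $u_*$ mod $p$, and that the lift of a loop closes precisely when the endpoint root returns, which happens when the winding number of $u\circ\gamma$ is divisible by $p$. Basepoint conventions are harmless, since the image subgroup is normal (the cover is regular and cyclic).
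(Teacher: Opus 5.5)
Your proof is correct and follows essentially the same route as the paper: both identify the covering subgroup as $u_*^{-1}(p\mathbb Z)$ and then pass to first homology. The paper checks the equality by listing the generators $p[a_1],[b_1],[a_2],\ldots,[b_g]$ in a basis normalized so that $u_*(a_1)=1$. You instead push the full preimage forward through the surjective Hurewicz map, which is basis-free, and you also spell out the lifting-criterion justification that the paper leaves implicit.
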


\begin{proof}
Choose standard generators so that $u_*(a_1)=1$ and $u_*$ vanishes on
$b_1,a_2,b_2,\ldots,a_g,b_g$.  The covering subgroup contains
$a_1^p$ and all of these latter generators.  Hence the image of $H_1(F)$
contains
\[
 p[a_1],\ [b_1],\ [a_2],\ [b_2],\ldots,[a_g],[b_g].
\]
Conversely, every loop in the covering subgroup has $u_*$ divisible by
$p$, so its homology class lies in the displayed kernel.  The two
subgroups are therefore equal.
\end{proof}

\begin{lemma}\label{lem:boundaryH1}
Under the projection
\[
 H_1(M;\mathbb Z)\longrightarrow H_1(\Sigma_g;\mathbb Z),
\]
the image of $H_1(\partial\nu F;\mathbb Z)$ is exactly
$q_*H_1(F;\mathbb Z)$.  The normal circle maps to the meridional class
in the $\mathbb Z/p$ summand.
\end{lemma}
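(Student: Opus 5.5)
The plan is to compute the map $H_1(\partial\nu F)\to H_1(M)$ on the generators of $H_1(\partial\nu F;\mathbb Z)\cong H_1(F;\mathbb Z)\oplus\mathbb Z\langle m\rangle$, which comes from the framing $\partial\nu F\cong F\times S^1$. We then compose with the projection $\pi:M\to\Sigma_g$. The key observation is that the tubular neighborhood was chosen fiberwise. Its intersection with each sphere fiber is a union of $p$ small disks around the points of the multisection, so $\partial\nu F$ maps to $\Sigma_g$ by a map of the form $(y,\theta)\mapsto q(y)$, at least up to homotopy. For a push-off of a loop $\gamma\subset F$ we may take $\gamma\times\{\theta_0\}$ for any framing. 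Its projection to $\Sigma_g$ is then homotopic to $q\circ\gamma$, since the push-off stays within the small disk around the root over $q(\gamma(t))$. Hence $\pi_*(\text{push-off of }[\gamma])=q_*[\gamma]$. The normal circle $m$ lies in a single fiber $P_p$, so it projects to zero. Together these give $\pi_*\bigl(\operatorname{im}H_1(\partial\nu F)\bigr)=q_*H_1(F;\mathbb Z)$ exactly, and Lemma~\ref{lem:image-cover} then identifies this image concretely.

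Framing changes only add multiples of $m$ to push-offs, so they do not affect the projected image, and the first claim is framing-independent.

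For the second claim, $m$ is the boundary of one of the small disks $D_j$ in the fiber $\{x\}\times S^2$. Viewed in $P_p$, it is the peripheral loop $\mu_j$ around that puncture. By Proposition~\ref{prop:generalpi1} and the abelianization computed in Section~\ref{sec:complements}, in $H_1(M)$ all the $\mu_j$ become a single class $\mu$ generating the $\mathbb Z/p$ summand. This uses the fact that the cyclic monodromy $a_1$ conjugates $\mu_j$ to $\mu_{j+1}$. The splitting $H_1(M)\cong H_1(\Sigma_g)\oplus\mathbb Z/p$ is the one given by the north-pole section, and under it the fiber class $\mu$ sits entirely in the $\mathbb Z/p$ summand, with zero base component. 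So $m\mapsto\mu$, the meridional generator. The orientation sign is fixed by orienting $m$ as the positive boundary of the normal disk, consistently with the orientation of the $\mu_j$.

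The only point needing care is the claim that the fiberwise neighborhood makes the projection of $\partial\nu F$ homotopic to $q\circ\mathrm{pr}_F$. I would handle this with the explicit straight-line homotopy inside each disk fiber, which commutes with $\pi$. Everything else is bookkeeping with the earlier presentation.
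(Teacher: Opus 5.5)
Your proposal is correct and matches the paper's proof: push-offs of loops in $F$ project to their $q$-images because $\nu F$ is chosen fiberwise, and the normal circle lies in a single fiber and projects trivially. Framing changes only add meridians, and the normal circle is a peripheral loop of $P_p$ representing the meridional generator of the $\mathbb Z/p$ summand. Your extra details (the orientation convention and locating $\mu$ in the section-split summand) make explicit what the paper leaves implicit. Note that with a fiberwise neighborhood the projection restricted to $\partial\nu F$ equals $q\circ\mathrm{pr}_F$ exactly, so the straight-line homotopy is not needed.
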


\begin{proof}
A loop in the $F$ factor of $F\times S^1$ projects to its image under
$q$, while the normal circle projects trivially to the base.  A change of
normal framing may add a multiple of the meridian to a surface push-off,
but cannot alter its projection to $H_1(\Sigma_g)$.  The normal circle is
a boundary component of the punctured-sphere fiber and represents the
meridional class.
\end{proof}

\begin{proposition}\label{prop:integral-obstruction}
For every gluing diffeomorphism
$\phi:\partial\nu F_2\to\partial\nu F_3$, the group
$H_1(Z_\phi;\mathbb Z)$ admits surjections
\[
 H_1(Z_\phi;\mathbb Z)\twoheadrightarrow\mathbb Z/2,\qquad
 H_1(Z_\phi;\mathbb Z)\twoheadrightarrow\mathbb Z/3.
\]
Consequently no direct mixed fiber sum has the integral homology of
$S^2\times S^2$.
\end{proposition}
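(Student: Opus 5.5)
We build each surjection as a homomorphism out of $H_1(Z_\phi;\mathbb Z)$ that is assembled on the two pieces of the Mayer--Vietoris decomposition and checked to vanish on the glued boundary. Write $A$ and $B$ for the two complements and $\Sigma:=\partial\nu F_2\cong F_2\times S^1$, identified with $\partial\nu F_3$ through $\phi$. The Mayer--Vietoris sequence ends with
\[
 H_1(\Sigma;\mathbb Z)\xrightarrow{(i_{A*},\,-i_{B*}\phi_*)}H_1(A;\mathbb Z)\oplus H_1(B;\mathbb Z)\longrightarrow H_1(Z_\phi;\mathbb Z)\longrightarrow \tilde H_0(\Sigma)=0 ,
\]
because the boundary is connected. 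Hence $H_1(Z_\phi)$ is the cokernel of the first map. A homomorphism from $H_1(A)\oplus H_1(B)$ to an abelian group therefore descends to $H_1(Z_\phi)$ exactly when it kills the image of $H_1(\Sigma)$.

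For the $\mathbb Z/2$ quotient we start from the map $\psi_A:H_1(A)\to\mathbb Z/2$. It is the composite of the projection $H_1(A)\to H_1(\Sigma_4)$ with the reduction $\langle\xi_4,\cdot\rangle \bmod 2$. We then set $\Psi=(\psi_A,0):H_1(A)\oplus H_1(B)\to\mathbb Z/2$. Lemma~\ref{lem:boundaryH1} shows that every class of $H_1(\Sigma)$ projects under $i_{A*}$ into $q_*H_1(F_2)$, and this holds for every framing and every $\phi$, since framings and rim twists change push-offs only by meridians, which project trivially to the base. Lemma~\ref{lem:image-cover} identifies $q_*H_1(F_2)$ with $\ker(\langle\xi_4,\cdot\rangle \bmod 2)$. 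So $\Psi$ vanishes on the image of $H_1(\Sigma)$, whatever $\phi_*$ does on the $B$ side, and it descends to $H_1(Z_\phi)$. It is surjective because $\psi_A$ already sends $[a_1]$ to $1$. The $\mathbb Z/3$ quotient comes from the same argument with the roles swapped: we use $(0,\psi_B)$, where $\psi_B$ is projection to $H_1(\Sigma_3)$ followed by $\langle\xi_3,\cdot\rangle\bmod 3$.

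For the consequence, note that $H_1(S^2\times S^2;\mathbb Z)=0$ has no nontrivial quotient, while $H_1(Z_\phi)$ surjects onto $\mathbb Z/6$. So no direct mixed fiber sum has the integral homology of $S^2\times S^2$.

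The step that needs care is the claim that the image of the boundary in the base homology does not depend on the gluing. Since $\phi$ may be an arbitrary bundle diffeomorphism, it can mix the normal-circle and surface directions. The proof handles this by working only with the $A$-side (respectively $B$-side) projection of the entire group $H_1(\Sigma)$, so we never need to know how $\phi$ acts. We also rely on Lemma~\ref{lem:boundaryH1} covering the meridian class, which projects to $0$ in the base. With those two points in place, the argument is formal.
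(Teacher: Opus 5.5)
Your proposal is correct and is essentially the paper's proof. The paper also reads $H_1(Z_\phi;\mathbb Z)$ as the Mayer--Vietoris cokernel, projects one factor to the base homology and quotients by $q_*H_1(F)$, which you rewrite via Lemma~\ref{lem:image-cover} as $\langle\xi,\cdot\rangle \bmod p$. It then uses Lemma~\ref{lem:boundaryH1} to see that this map kills the boundary image independently of $\phi$. Your remarks on surjectivity via the section lift of $a_1$, and on $\phi$ possibly mixing meridian and surface directions, only make explicit what the paper leaves implicit.
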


\begin{proof}
Mayer--Vietoris gives
\[
 H_1(\partial\nu F_2)
 \xrightarrow{\Psi_\phi}
 H_1(A)\oplus H_1(B)
 \longrightarrow H_1(Z_\phi)\longrightarrow0.
\]
Project the first factor to $H_1(\Sigma_4)$ and then quotient by
$q_{2*}H_1(F_2)$.  Lemma~\ref{lem:boundaryH1} shows that this composite
vanishes on $\operatorname{im}\Psi_\phi$, independently of $\phi$, and
therefore descends to a surjection
\[
 H_1(Z_\phi)\twoheadrightarrow
 H_1(\Sigma_4)/q_{2*}H_1(F_2)\cong\mathbb Z/2.
\]
The same argument on the second factor gives the quotient $\mathbb Z/3$.
\end{proof}

The free part can nevertheless be killed.  Put
$V=H_1(\Sigma_7;\mathbb Q)$.  After an initial identification of the two
genus-seven surfaces, let
\[
 Q_2:V\to H_1(\Sigma_4;\mathbb Q),\qquad
 Q_3:V\to H_1(\Sigma_3;\mathbb Q)
\]
be the covering push-forwards, and let $K_i=\ker Q_i$.  Then
$\dim K_2=6$ and $\dim K_3=8$.

\begin{lemma}\label{lem:symplectic-kernel}
Each $K_i$ is a symplectic subspace of $V$.
\end{lemma}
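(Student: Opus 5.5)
The plan is to identify $K_i$ with the orthogonal complement (for the intersection form) of the image of the transfer. The starting point is the projection formula for an unbranched covering $q:F\to\Sigma$ of degree $p$. Let $q^!:H_1(\Sigma;\mathbb Q)\to H_1(F;\mathbb Q)$ be the transfer, i.e.\ the Poincaré dual of $q^*$ on $H^1$. Then for $x\in H_1(F)$ and $y\in H_1(\Sigma)$ we have
\[
 x\cdot q^!y = q_*x\cdot y,
\]
and moreover
\[
 q_*q^! = p\cdot\mathrm{id}, \qquad q^!y\cdot q^!y' = p\,(y\cdot y').
\]
The first identity is the cup-product version $\langle PD(x)\cup q^*\eta,[F]\rangle = \langle q_*(PD x \cap [F]),\eta\rangle$ combined with naturality of cap products. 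The second follows because $q^*$ multiplies cup products evaluated on fundamental classes by $\deg q = p$. I would work with $Q_i=q_{i*}$ after the fixed identification of the two genus-seven surfaces with $\Sigma_7$, which is an orientation-preserving diffeomorphism and therefore preserves the intersection form.

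\textbf{Step 1.} Set $W_i = \operatorname{im} q_i^!\subset V$. By the projection formula, $x\in K_i$ if and only if $x\cdot q_i^!y=0$ for all $y$. Nondegeneracy of the form on $H_1(\Sigma;\mathbb Q)$ is used here: $q_*x=0$ exactly when $q_*x\cdot y=0$ for all $y$. Hence
\[
 K_i = W_i^{\perp}.
\]

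\textbf{Step 2.} $W_i$ is a symplectic subspace. The restriction of the form to $W_i$ is $p$ times the pullback of the nondegenerate form on $H_1(\Sigma;\mathbb Q)$ under $q^!$. The map $q^!$ is injective because $q_*q^!=p\,\mathrm{id}$ and we work over $\mathbb Q$. So the restricted form on $W_i$ is nondegenerate, and $\dim W_i = 2g$, with $g=4$ or $3$.

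\textbf{Step 3.} In a symplectic vector space, the orthogonal complement of a symplectic subspace is symplectic and complementary. Therefore $K_i=W_i^\perp$ is symplectic of dimension $14-2g$, namely $6$ and $8$, matching $\dim K_2=6$ and $\dim K_3=8$. The same argument works verbatim for any cyclic (indeed any finite unbranched) cover, which is what Theorem~\ref{thm:general-rational} invokes.

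The main obstacle is justifying the projection formula with correct signs and orientation conventions. I would derive it cohomologically as $q_*(q^*\eta\cap c) = \eta\cap q_*c$ with $c=[F]$ and $q_*[F]=p[\Sigma]$. Only nondegeneracy matters for the conclusion, so sign conventions are harmless. As an alternative check, one can use the explicit basis from Lemma~\ref{lem:image-cover}. The lifts of $b_1,a_j,b_j$ ($j\ge2$) to each of the $p$ sheets, together with the lift of $a_1^p$, give an explicit description of $H_1(F)$. The kernel is spanned by differences of sheet lifts and the handles created by the cut along the dual curve to $\xi$, and one can check nondegeneracy directly. I would present the transfer argument and mention this as a sanity check.
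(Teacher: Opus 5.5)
Your proposal is correct and follows the paper's proof. The paper notes that $q^*H^1(\Sigma_g;\mathbb Q)$ is nondegenerate because $\langle q^*\alpha,q^*\beta\rangle_F=p\langle\alpha,\beta\rangle_{\Sigma_g}$, gets $\ker q_*=(\operatorname{im}q^*)^\perp$ from the adjointness of $q_*$ and $q^*$ under Poincar\'e duality, and passes to the symplectic complement; this is your Steps 1--3, with your transfer $q^!$ playing the role of $q^*$. One correction to your optional sanity check: $\ker q_*$ is already spanned by the sheet-differences of the lifts of $a_j,b_j$ $(j\ge2)$, which have the right dimension $2(p-1)(g-1)$, while the extra handle from the cyclic regluing (a lift of the cut curve $b_1$ together with the lift of $a_1^p$) maps to $[b_1]$ and $p[a_1]$, so it is not in the kernel.
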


\begin{proof}
For a finite oriented covering $q:F\to\Sigma_g$ of degree $p$,
$q^*H^1(\Sigma_g;\mathbb Q)$ is nondegenerate for the intersection
pairing because
\[
 \langle q^*\alpha,q^*\beta\rangle_F
 =p\,\langle\alpha,\beta\rangle_{\Sigma_g}.
\]
The push-forward $q_*$ is adjoint to $q^*$ under Poincare duality, so
$\ker q_*=(\operatorname{im}q^*)^\perp$.  The symplectic orthogonal
complement of a nondegenerate symplectic subspace is symplectic.
\end{proof}

\begin{lemma}\label{lem:linear-gluing}
There exists $T\in\operatorname{Sp}(14,\mathbb Z)$ such that
\[
 K_2\cap T^{-1}K_3=\{0\}
\]
over $\mathbb Q$.
\end{lemma}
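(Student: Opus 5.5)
We need $T\in\operatorname{Sp}(14,\mathbb Z)$ with $K_2\cap T^{-1}K_3=0$, where $\dim K_2=6$ and $\dim K_3=8$ inside the $14$-dimensional space $V$. Since $6+8=14$, the target is exact transversality, and the plan is the same two-step argument as in Theorem~\ref{thm:general-rational}: first find a real symplectic $T$, then pass to an integral one. By Lemma~\ref{lem:symplectic-kernel}, $K_2$ and $K_3$ are symplectic subspaces of $V\otimes\mathbb R$.

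\emph{Step 1 (real existence).} Lemma~\ref{lem:symp-general-position}, applied with $a=3$, $b=4$ and $a+b=7=G$, gives $T_0\in\operatorname{Sp}(14,\mathbb R)$ with $T_0K_2\subset K_3^{\perp}$. Since $K_3^\perp$ is symplectic of dimension $6=\dim K_2$, in fact $T_0K_2=K_3^\perp$. Because $K_3$ is symplectic, $K_3\cap K_3^\perp=0$, and therefore $K_2\cap T_0^{-1}K_3=0$.

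\emph{Step 2 (Zariski openness).} Fix rational bases $e_1,\dots,e_6$ of $K_2$ and $f_1,\dots,f_8$ of $K_3$; these exist because the covering push-forwards are defined over $\mathbb Q$. Transversality is then the condition
\[
 D(T)=\det\bigl(Te_1,\dots,Te_6,f_1,\dots,f_8\bigr)\neq0,
\]
and $D$ is a polynomial in the entries of $T$ with rational coefficients. Step~1 shows that $D$ does not vanish identically on the connected algebraic group $\operatorname{Sp}_{14}$, which is irreducible. Hence $\{D\ne0\}$ is a nonempty Zariski-open subset.

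\emph{Step 3 (integral point).} Borel density \cite{BorelDensity} says that $\operatorname{Sp}(14,\mathbb Z)$ is Zariski dense in $\operatorname{Sp}(14,\mathbb R)$, so it meets this nonempty open set. That produces the required $T$, with transversality holding over $\mathbb R$ and hence over $\mathbb Q$.

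The main obstacle is Step~3's appeal to Borel density, which is heavy machinery. A more elementary alternative is available. The restriction of $D$ to a one-parameter family $\tau_v^k T$ is a polynomial in $k$, since $\tau_v^k=I+k\,v\langle\cdot,v\rangle$. Symplectic transvections with integral $v$ generate $\operatorname{Sp}(14,\mathbb Z)$, and together with the identity they generate a dense set. If $D$ vanished on all of $\operatorname{Sp}(14,\mathbb Z)$, one would then show inductively that $D$ vanishes on the group generated by real transvections, which is all of $\operatorname{Sp}(14,\mathbb R)$. This contradicts Step~1. I would cite Borel density and mention the transvection argument as a self-contained alternative.
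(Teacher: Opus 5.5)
Your main argument is correct and is essentially the paper's proof. The paper also maps $K_2$ symplectically onto $K_3^{\perp}$, which is the same construction as your Step~1 via Lemma~\ref{lem:symp-general-position}. It then notes that transversality is the nonvanishing of a determinant, hence a nonempty Zariski-open condition, and invokes Borel density to find an integral point.

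Your transvection alternative is only sketched and does not affect the main proof. The polynomial-in-$t$ argument carries vanishing of $D$ from $\operatorname{Sp}(14,\mathbb Z)$ only to products of real-parameter transvections $\tau_v^t$ along integral directions $v$. To reach all of $\operatorname{Sp}(14,\mathbb R)$ you would still need a density/continuity or Lie-algebra spanning step. Also, ``dense set'' is not literally right, since $\operatorname{Sp}(14,\mathbb Z)$ is discrete.
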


\begin{proof}
The symplectic spaces $K_2$ and $K_3^\perp$ both have dimension six.
Choose a symplectic isomorphism $K_2\to K_3^\perp$ and extend it to a
symplectic automorphism of $V$.  For this automorphism the required
intersection is zero.  The transversality condition is the nonvanishing
of a determinant, hence defines a nonempty Zariski-open subset of
$\operatorname{Sp}(14,\mathbb Q)$.  By Borel density \cite{BorelDensity},
$\operatorname{Sp}(14,\mathbb Z)$ is Zariski dense in the ambient
symplectic group, so the open set contains an integral symplectic
matrix.
\end{proof}

\begin{proposition}[Rational homology gluing]\label{prop:rational-gluing}
There is a gluing diffeomorphism $\phi$ for which
\[
 H_1(Z_\phi;\mathbb Q)=0.
\]
\end{proposition}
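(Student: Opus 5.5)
The plan is to run the rational Mayer--Vietoris sequence for $Z_\phi=A\cup_\phi B$, using Lemma~\ref{lem:linear-gluing} to choose the gluing. Over $\mathbb Q$ the meridional torsion in Corollaries~\ref{cor:Agroup} and~\ref{cor:Bgroup} disappears. Hence
\[
 H_1(A;\mathbb Q)\cong H_1(\Sigma_4;\mathbb Q)\cong\mathbb Q^8,\qquad
 H_1(B;\mathbb Q)\cong H_1(\Sigma_3;\mathbb Q)\cong\mathbb Q^6 .
\]
In both cases the identification is induced by the bundle projection. The boundary is $\partial\nu F_2\cong\Sigma_7\times S^1$, with $H_1\otimes\mathbb Q\cong V\oplus\mathbb Q\,m$. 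By Lemma~\ref{lem:boundaryH1}, the normal circle $m$ maps to torsion on each side, hence to zero rationally. A surface class $x\in V$ maps to $Q_2(x)$ in $H_1(\Sigma_4;\mathbb Q)$, whatever framing is used, since framing changes only add meridians.

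On the other side, the gluing $\phi$ covers a diffeomorphism $f:\Sigma_7\to\Sigma_7$ that respects the initial identification. It sends surface push-offs to surface push-offs up to meridians. Therefore the rational Mayer--Vietoris map
\[
 H_1(\partial\nu F_2;\mathbb Q)\longrightarrow H_1(A;\mathbb Q)\oplus H_1(B;\mathbb Q)
\]
is $(x,c\,m)\mapsto (Q_2x,-Q_3f_*x)$. Writing $T=f_*\in\operatorname{Sp}(14,\mathbb Z)$, its rank equals
\[
 14-\dim\bigl(K_2\cap T^{-1}K_3\bigr).
\]

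Lemma~\ref{lem:linear-gluing} provides $T$ with $K_2\cap T^{-1}K_3=0$. For this $T$ the map $V\to\mathbb Q^8\oplus\mathbb Q^6$ is injective between spaces of dimension $14$, hence an isomorphism, and in particular surjective. We then realize $T$ by a mapping class, using the surjection $\operatorname{Mod}(\Sigma_7)\to\operatorname{Sp}(14,\mathbb Z)$ \cite{FarbMargalit}. Next we define
\[
 \phi(x,e^{it})=(f(x),e^{-it}),
\]
which is orientation-reversing on the normal circles. Rational exactness of
\[
 H_1(\partial)\to H_1(A)\oplus H_1(B)\to H_1(Z_\phi)\to \tilde H_0(\partial)=0
\]
then gives $H_1(Z_\phi;\mathbb Q)=0$.

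The main obstacle is bookkeeping: checking that the rational boundary map really factors through $Q_2$ and $Q_3f_*$ after composing with $\phi$. In particular, we must confirm that the chosen framings and the initial identification of the two genus-seven surfaces do not introduce extra surface components. Since framings alter push-offs only by meridians, which are rationally zero, this reduces to Lemma~\ref{lem:boundaryH1} applied on both sides.
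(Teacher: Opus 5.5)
Your proposal is correct and follows essentially the same route as the paper: take $T$ from Lemma~\ref{lem:linear-gluing}, realize it by a mapping class of $\Sigma_7$, and observe that the rational Mayer--Vietoris map reduces to $x\mapsto(Q_2x,-Q_3Tx)$, whose kernel $K_2\cap T^{-1}K_3$ is zero, so it is an isomorphism between $14$-dimensional spaces. Your extra bookkeeping (framing changes add only meridians, which vanish rationally, and the meridian dies over $\mathbb Q$ by Lemma~\ref{lem:boundaryH1}) spells out what the paper compresses into the phrase ``the meridional torsion vanishes.''
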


\begin{proof}
The mapping class group of $\Sigma_7$ surjects onto
$\operatorname{Sp}(14,\mathbb Z)$, so choose a surface diffeomorphism
inducing the matrix $T$ from Lemma~\ref{lem:linear-gluing}.  Over
$\mathbb Q$ the meridional torsion vanishes, and the Mayer--Vietoris map
on the surface factor becomes
\[
 \Theta_T:V\longrightarrow
 H_1(\Sigma_4;\mathbb Q)\oplus H_1(\Sigma_3;\mathbb Q),
 \qquad
 x\longmapsto(Q_2x,-Q_3Tx).
\]
Its kernel is $K_2\cap T^{-1}K_3=0$.  Both sides have dimension $14$,
so $\Theta_T$ is an isomorphism.  Thus $H_1(Z_\phi;\mathbb Q)=0$.
\end{proof}

\begin{theorem}[Rational cohomology $S^2\times S^2$]\label{thm:Qcohomology}
There are symplectic fiber sums $Z_\phi$ obtained from the two explicit
genus-seven braided multisections such that
\[
 H^*(Z_\phi;\mathbb Q)\cong
 H^*(S^2\times S^2;\mathbb Q)
\]
as graded rings.  For every such $Z_\phi$ one nevertheless has
$\pi_1(Z_\phi)\neq1$ and $H_1(Z_\phi;\mathbb Z)\neq0$.
\end{theorem}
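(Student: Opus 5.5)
The plan is to assemble Theorem~\ref{thm:Qcohomology} from results already established for the $(2,3)$ case: the rational gluing, the geography computation, and the integral obstruction. No new technical input should be needed beyond checking symplecticity of the gluing and pinning down the cohomology ring.

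\textbf{Step 1: the gluing.} Take the surface diffeomorphism $f$ of $\Sigma_7$ realizing the matrix $T$ of Lemma~\ref{lem:linear-gluing}, as in Proposition~\ref{prop:rational-gluing}. Isotope $f$ to an area-preserving map; this is possible by Moser's argument, since $f$ preserves orientation and total area. Use $f$ to form the boundary identification $\phi(x,e^{it})=(f(x),e^{-it})$, with respect to the framings $\partial\nu F_i\cong F_i\times S^1$ coming from $F_i^2=0$. Both $F_2$ and $F_3$ are symplectic of square zero by Corollary~\ref{cor:two}. Gompf's symplectic sum theorem therefore makes $Z_\phi$ symplectic.

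\textbf{Step 2: $H_1(Z_\phi;\mathbb Q)=0$.} Proposition~\ref{prop:rational-gluing} gives this directly, since its argument is insensitive to the framing: over $\mathbb Q$ the meridional torsion dies. To be careful, the rational Mayer--Vietoris computation should be phrased through the projections to $H_1(\Sigma_4;\mathbb Q)\oplus H_1(\Sigma_3;\mathbb Q)$, which are isomorphisms rationally. Lemma~\ref{lem:boundaryH1} shows that framing changes do not affect these projections.

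\textbf{Step 3: the cohomology ring.} By Poincar\'e duality, $b_3=b_1=0$. Proposition~\ref{prop:geography} gives $e=4$, so $b_2=2$, and $\sigma=0$ gives $b_2^\pm=1$. The rational intersection form is then a nondegenerate indefinite rank-two form with signature zero. Any such form over $\mathbb Q$ represents zero, so it is isomorphic to the hyperbolic form. This is the one point needing care: isotropic vectors exist because a rank-two form of discriminant $-1$ (up to squares) is isotropic. Alternatively, as in Corollary~\ref{cor:classifiedQ}, unimodularity on $H_2/\mathrm{Tor}$ forces the form to be $H$ or $\langle1\rangle\oplus\langle-1\rangle$, and both are isotropic. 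The cup product $H^2\times H^2\to H^4\cong\mathbb Q$ is this form, and all other products land in zero groups. Hence the graded ring is isomorphic to $H^*(S^2\times S^2;\mathbb Q)$, which is $\mathbb Q[x,y]/(x^2,y^2)$ with $xy$ the generator.

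\textbf{Step 4: the integral statements.} Proposition~\ref{prop:integral-obstruction} gives surjections of $H_1(Z_\phi;\mathbb Z)$ onto $\mathbb Z/2$ and $\mathbb Z/3$ for every $\phi$. Hence $H_1(Z_\phi;\mathbb Z)\neq0$, and since $H_1$ is the abelianization of $\pi_1$, also $\pi_1(Z_\phi)\neq1$. The incompressibility result is not needed for this weaker claim.

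The main obstacle is Step 1. One must make sure that the diffeomorphism realizing $T$ can be made compatible with the symplectic sum, meaning area preserving and correctly covered by a fiber-orientation-reversing bundle map. Moser's argument should handle this.
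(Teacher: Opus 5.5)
Your Steps 1--3 follow the paper's proof. These are the rational gluing of Proposition~\ref{prop:rational-gluing}, the Betti numbers from Proposition~\ref{prop:geography}, the unimodular rank-two form on $H_2/\operatorname{Tor}$ becoming hyperbolic over $\mathbb Q$, and Moser plus Gompf for the symplectic structure.

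The real divergence is Step 4. You deduce $\pi_1(Z_\phi)\neq1$ from $H_1(Z_\phi;\mathbb Z)\neq0$ by abelianization, whereas the paper cites Theorem~\ref{thm:nogo}. Your route is valid and more economical: both integral claims then rest on Proposition~\ref{prop:integral-obstruction} alone, with no need for boundary incompressibility. However, it only detects a finite abelian quotient. The paper's route shows that $\pi_1$ of each complement injects into $\pi_1(Z_\phi)$. Hence $\pi_1(Z_\phi)$ contains a subgroup surjecting onto $\pi_1(\Sigma_4)$ and is infinite, even though $H_1(Z_\phi;\mathbb Z)$ is finite for exactly the gluings this theorem concerns.

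Two small repairs are needed.
\begin{enumerate}
\item In Step 1, $f$ does not automatically preserve total area. You must first rescale the product forms, for instance via the factor $\lambda$, so that $F_2$ and $F_3$ have equal area, as the paper does. Only then does Moser's argument give an area-preserving representative.
\item In Step 3, the assertion that every nondegenerate indefinite rank-two form over $\mathbb Q$ represents zero is false. The form $x^2-2y^2$ has signature zero but is anisotropic over $\mathbb Q$. Isotropy really comes from the discriminant being $-1$, and that comes from unimodularity of the integral form on $H_2/\operatorname{Tor}$. So your ``alternative'' argument, which is the paper's, is the one doing the work.
\end{enumerate}
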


\begin{proof}
Choose $\phi$ as in Proposition~\ref{prop:rational-gluing}.  Then
$b_1=0$, and by Poincare duality $b_3=0$.  Since $e(Z_\phi)=4$,
we get $b_2=2$.  The signature is zero, so
$b_2^+=b_2^-=1$.

By Poincare duality together with the universal coefficient theorem,
the intersection pairing on
$H_2(Z_\phi;\mathbb Z)/\operatorname{Tor}$ is unimodular.  It has rank
two and signature zero, hence is either
\[
 H=\begin{pmatrix}0&1\\1&0\end{pmatrix}
 \quad\text{or}\quad
 \langle1\rangle\oplus\langle-1\rangle.
\]
Both forms are hyperbolic over $\mathbb Q$.  Since $H^1(Z_\phi;\mathbb
Q)=H^3(Z_\phi;\mathbb Q)=0$, this determines the rational cohomology
ring and gives
\[
 H^*(Z_\phi;\mathbb Q)\cong H^*(S^2\times S^2;\mathbb Q).
\]

The nontriviality of $\pi_1$ follows from
Theorem~\ref{thm:nogo}, while Proposition~\ref{prop:integral-obstruction}
gives $H_1(Z_\phi;\mathbb Z)\neq0$.

Finally, rescale the product symplectic forms so that $F_2$ and $F_3$
have equal area.  Any chosen orientation-preserving mapping class can be
represented by an area-preserving diffeomorphism after an isotopy, by
Moser's theorem.  Gompf's symplectic sum construction therefore gives a
symplectic form on $Z_\phi$.
\end{proof}

\begin{remark}
If $H_1(Z_\phi)$ is finite, Proposition~\ref{prop:integral-obstruction}
forces its order to be divisible by both $2$ and $3$, hence by $6$.
Determining the smallest possible integral first homology is an integral
Smith-normal-form problem for the Mayer--Vietoris matrix and is not claimed
here.
\end{remark}

\section{Boundary incompressibility}\label{sec:obstruction}

The needed point is the injectivity of the boundary group.

\begin{theorem}\label{thm:incompressible}
Let
\[
 F\subset\Sigma_g\times S^2,\qquad g\ge1,
\]
be a connected unbranched multisection of degree $p\ge2$.  Put
\[
 M=(\Sigma_g\times S^2)\setminus\Int\nu F.
\]
Then the inclusion
\[
 \iota:\partial\nu F\longrightarrow M
\]
induces an injection
\[
 \iota_*:\pi_1(\partial\nu F)\hookrightarrow\pi_1(M).
\]
\end{theorem}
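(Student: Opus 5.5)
The plan is to use the bundle structure of the complement, exactly as in Section~\ref{sec:complements}.  Since $F$ is an unbranched multisection, after choosing $\nu F$ fiberwise the projection $M\to\Sigma_g$ is a fiber bundle with fiber $P_p=S^2\setminus\bigcup_{j=1}^p\Int D_j$.  Likewise $\partial\nu F\to F\to\Sigma_g$ exhibits $\partial\nu F$ as a bundle over $\Sigma_g$ whose fiber is the disjoint union of the $p$ boundary circles of $P_p$.  More usefully, $\partial\nu F$ is a circle bundle over $F$, and $F\to\Sigma_g$ is a $p$-fold covering.  The map $\partial\nu F\to M$ is then a map of fibrations over $\Sigma_g$ after we pass to the covering.

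Concretely, I would pull back the bundle $M\to\Sigma_g$ along the covering $q:F\to\Sigma_g$.  Let $\widetilde M\to F$ be the pulled-back $P_p$-bundle.  It is a $p$-fold cover of $M$, corresponding to the finite-index subgroup $\pi_1(P_p)\rtimes q_*\pi_1(F)$ of $\pi_1(M)$.  The inclusion $\partial\nu F\to M$ lifts to $\widetilde M$, because its projection to $\Sigma_g$ factors through $q$.  Since $\pi_1(\widetilde M)\to\pi_1(M)$ is injective, it suffices to prove injectivity of $\pi_1(\partial\nu F)\to\pi_1(\widetilde M)$.

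Over $F$ the pulled-back multisection has a tautological section, namely the sheet through the given point, so the lift embeds $\partial\nu F$ as the boundary circle bundle $C$ of that one puncture.  This is a sub-bundle $S^1\to C\to F$ of the bundle $P_p\to\widetilde M\to F$.  Both long exact sequences have $\pi_2(F)=0$, which gives a commutative ladder of short exact sequences
\[
 1\to\pi_1(S^1)\to\pi_1(C)\to\pi_1(F)\to1,\qquad
 1\to\pi_1(P_p)\to\pi_1(\widetilde M)\to\pi_1(F)\to1,
\]
with the identity on $\pi_1(F)$.  By the five lemma (the injectivity half), injectivity of $\pi_1(C)\to\pi_1(\widetilde M)$ reduces to injectivity of $\pi_1(S^1)\to\pi_1(P_p)$ for a boundary circle.

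That last step is the only place $p\ge2$ is used.  For $p\ge2$, $P_p$ is an annulus or a free group of rank $p-1\ge1$, and a boundary circle represents a primitive (in particular nontrivial, infinite-order) element of the free group.  For $p=1$ it would be a disk, which explains the hypothesis.

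The main obstacle is making the lifting rigorous.  One needs $\nu F$ to be chosen fiberwise so that $M\to\Sigma_g$ is genuinely a locally trivial bundle.  This holds because $F$ is transverse to the sphere fibers; by the isotopy extension theorem, the configuration of $p$ disks in $S^2$ varies as a locally trivial family.  One also needs to check basepoints, so that the lifted boundary really lands as the sub-bundle corresponding to one puncture, with the projection to $F$ inducing the identity on $\pi_1(F)$.  I would handle this by working with the fiber product $\widetilde M=F\times_{\Sigma_g}M$ directly and taking the map $\partial\nu F\to\widetilde M$, $y\mapsto(\pi(y),y)$, where $\pi:\partial\nu F\to F$ is the normal circle bundle projection.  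With that map the required compatibility is immediate.
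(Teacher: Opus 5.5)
Your proof is correct and is essentially the paper's argument: the same ladder of short exact sequences coming from $\pi_2(F)=\pi_2(\Sigma_g)=0$, with $\pi_1$-injectivity of a boundary circle in $P_p$ for $p\ge2$ as the key input. The only difference is that you first pass to the $p$-fold cover $F\times_{\Sigma_g}M$ so that the base map becomes the identity. The paper instead works directly with $M\to\Sigma_g$ and uses that $q_*\colon\pi_1(F)\to\pi_1(\Sigma_g)$ is injective, which makes your lifting step and the basepoint bookkeeping you flag as the main obstacle unnecessary.
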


\begin{proof}
Choose $\nu F$ fiberwise: over each $x\in\Sigma_g$ it consists
of disjoint small disks about the $p$ points of
$F\cap(\{x\}\times S^2)$.  Since the multisection is unbranched, these
disks vary locally trivially with $x$.  The projection
\[
 M\longrightarrow\Sigma_g
\]
is therefore a bundle with fiber
\[
 P_p=S^2\setminus\bigcup_{j=1}^p\Int D_j.
\]
The restriction of the projection to the boundary can be viewed as follows.
The boundary $\partial\nu F$ is a circle bundle over $F$, and
$q:F\to\Sigma_g$ is the original $p$-fold covering.

Choose a basepoint on one boundary circle in a fiber.  Since
$g\ge1$, both $F$ and $\Sigma_g$ are aspherical in degree two:
$\pi_2(F)=\pi_2(\Sigma_g)=0$.  The homotopy exact sequences of the two
bundles therefore give the following commutative diagram with exact
rows:
\[
\begin{array}{ccccccccc}
1&\longrightarrow&\pi_1(S^1)&\longrightarrow&
\pi_1(\partial\nu F)&\longrightarrow&\pi_1(F)&\longrightarrow&1\\
&&\downarrow&&\downarrow{\scriptstyle\iota_*}&&
\downarrow{\scriptstyle q_*}&&\\
1&\longrightarrow&\pi_1(P_p)&\longrightarrow&
\pi_1(M)&\longrightarrow&\pi_1(\Sigma_g)&\longrightarrow&1 .
\end{array}
\]
We justify the two injective outer vertical maps.

First, $q_*$ is injective because $q:F\to\Sigma_g$ is a covering.
Second, a boundary circle of $P_p$ is $\pi_1$-injective when $p\ge2$:
for $p=2$ the fiber is an annulus, and for $p\ge3$ it is a compact surface
of negative Euler characteristic with incompressible boundary.

Now let $\gamma\in\pi_1(\partial\nu F)$ satisfy
$\iota_*(\gamma)=1$.  Its image in $\pi_1(F)$ maps trivially under $q_*$.
Since $q_*$ is injective, the image of $\gamma$ in $\pi_1(F)$ is trivial.
Thus $\gamma$ lies in the subgroup generated by the circle fiber of
$\partial\nu F\to F$.  But the left vertical map
$\pi_1(S^1)\to\pi_1(P_p)$ is injective, and therefore $\gamma=1$.
The theorem follows.
\end{proof}

\begin{remark}
The proof uses only that the surface is a connected unbranched
multisection of degree at least two.  It does not depend on the cyclic
model of Section~\ref{sec:cyclic}, nor on any particular braid word.
\end{remark}

\section{The obstruction to the direct fiber sum}

We apply Theorem~\ref{thm:incompressible} to an arbitrary gluing.

\begin{lemma}\label{lem:normalform}
Let $C\to G_1$ and $C\to G_2$ be injective homomorphisms.  Then the natural
homomorphisms
\[
 G_1\longrightarrow G_1*_C G_2,\qquad
 G_2\longrightarrow G_1*_C G_2
\]
are injective.
\end{lemma}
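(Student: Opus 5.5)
This is the standard injectivity of the factors into an amalgamated free product, and I would prove it with the normal form theorem for amalgams. First I identify $C$ with its images $C_1\subset G_1$ and $C_2\subset G_2$, which I may do because the two homomorphisms are injective. Then I choose right transversals $S_1$ for $C_1$ in $G_1$ and $S_2$ for $C_2$ in $G_2$, each containing $1$. A \emph{normal form} is a sequence $(c;s_1,\dots,s_k)$ with $c\in C$. The $s_j$ are nontrivial transversal elements drawn alternately from $S_1$ and $S_2$.

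The approach is van der Waerden's permutation trick. Let $\Omega$ be the set of all normal forms. I define a left action of $G_1$ on $\Omega$ as follows. For $g\in G_1$ and a normal form $w$ whose first letter is not from $S_1$, I multiply $g$ by the $C$-part of $w$ and decompose $gc=c's$ with $c'\in C$ and $s\in S_1$. The result is the form $(c';s,s_1,\dots)$, with $s$ dropped if $s=1$. If the first letter of $w$ is $s_1\in S_1$, I decompose $g c s_1=c's$ instead. I define the $G_2$ action symmetrically.

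The key step is checking that these are genuine group actions. The cleanest way to do this is to observe that the set $\Omega_1$ of forms not beginning with an $S_1$-letter gives a bijection $G_1\times_{C}\Omega_1\cong\Omega$. Under this bijection $G_1$ acts by left multiplication on the first factor, so associativity is automatic. The same holds for $G_2$.

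The two actions agree on $C$: both send $(c;\dots)$ to $(c'c;\dots)$. By the universal property of the pushout, they therefore assemble into an action of $G_1*_C G_2$ on $\Omega$.

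Finally, for $g\in G_1$ I act on the empty normal form $(1;)$. Writing $g=cs$ with $c\in C$ and $s\in S_1$, the action yields $(c;s)$, or $(c;)$ if $s=1$. This form is distinct from $(1;)$ unless $g=1$. Hence any $g\neq 1$ in $G_1$ acts nontrivially, so its image in the amalgam is nontrivial, and the map $G_1\to G_1*_C G_2$ is injective. The same argument applies to $G_2$.

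The only real obstacle is the bookkeeping needed to verify the action axioms, and the fiber-product description above is what I would use to handle it. Alternatively, since the result is classical, one may simply cite Serre's \emph{Trees} or Lyndon--Schupp for the normal form theorem.
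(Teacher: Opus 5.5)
Your proposal is correct and rests on the same normal form theorem for amalgamated free products that the paper simply cites. The van der Waerden action you construct, with the bijection $G_1\times_C\Omega_1\cong\Omega$ handling the action axioms, is the standard proof of that theorem, so you are supplying details the paper leaves to the literature.
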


\begin{proof}
This is the standard normal-form theorem for a free product with
amalgamation.  A reduced word of length one represented by a nontrivial
element of either factor cannot represent the identity in the amalgam.
\end{proof}

\begin{theorem}\label{thm:nogo}
For $i=1,2$, let
\[
 F_i\subset\Sigma_{g_i}\times S^2
\]
be connected unbranched multisections of degrees $p_i\ge2$, with $g_i\ge1$,
and suppose their normal bundles admit a boundary identification
\[
 \phi:\partial\nu F_1\longrightarrow\partial\nu F_2.
\]
Then
\[
 Z=
 (\Sigma_{g_1}\times S^2\setminus\Int\nu F_1)
 \cup_\phi
 (\Sigma_{g_2}\times S^2\setminus\Int\nu F_2)
\]
is not simply connected.

More precisely, the fundamental group of each complement injects into
$\pi_1(Z)$.
\end{theorem}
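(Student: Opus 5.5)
The plan is to apply the Seifert--van Kampen theorem to the decomposition $Z=M_1\cup_\phi M_2$, where $M_i=(\Sigma_{g_i}\times S^2)\setminus\Int\nu F_i$, and then combine Theorem~\ref{thm:incompressible} with Lemma~\ref{lem:normalform}.

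First we check that the decomposition satisfies the hypotheses of van Kampen. The gluing region is $Y=\partial\nu F_1$, a circle bundle over the connected surface $F_1$, so it is connected. Each $M_i$ is connected, since by Section~\ref{sec:complements} it fibres over $\Sigma_{g_i}$ with connected fibre $P_{p_i}$. Thickening $Y$ to collars on both sides therefore gives
\[
 \pi_1(Z)\cong \pi_1(M_1)*_{\pi_1(Y)}\pi_1(M_2).
\]
Here $\pi_1(Y)$ maps to $\pi_1(M_1)$ by $\iota_{1*}$ and to $\pi_1(M_2)$ by $\iota_{2*}\circ\phi_*$. We choose a basepoint on $Y$ and use $\phi$ to transport it, which keeps the basepoint bookkeeping consistent.

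Next, both structure maps are injective. Theorem~\ref{thm:incompressible} applies to each $F_i$, because each is a connected unbranched multisection of degree $p_i\ge2$ with $g_i\ge1$. The map $\phi_*$ is an isomorphism since $\phi$ is a diffeomorphism. Lemma~\ref{lem:normalform} then shows that both $\pi_1(M_1)$ and $\pi_1(M_2)$ inject into $\pi_1(Z)$, which is the "more precisely" assertion.

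Finally, it remains to see that $\pi_1(M_1)\neq1$; injectivity then forces $\pi_1(Z)\neq1$. There are two easy witnesses, and either suffices.
\begin{itemize}
\item The base: the fibration $M_1\to\Sigma_{g_1}$ surjects onto $\pi_1(\Sigma_{g_1})$, which is nontrivial for $g_1\ge1$.
\item The boundary: $\pi_1(\partial\nu F_1)$ surjects onto $\pi_1(F_1)\neq1$, and $\pi_1(\partial\nu F_1)$ itself injects into $\pi_1(M_1)$ by Theorem~\ref{thm:incompressible}.
\end{itemize}

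There is no real obstacle. The only point needing care is the van Kampen setup: the boundary must be connected, the collar neighbourhoods must be chosen properly, and the two inclusions must be identified through $\phi$. All of the substantive input is already supplied by Theorem~\ref{thm:incompressible}.
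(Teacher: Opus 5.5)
Your proposal is correct and follows essentially the same route as the paper: van Kampen presents $\pi_1(Z)$ as the amalgam over $\pi_1(\partial\nu F_1)$ (with the second map composed with $\phi_*$), Theorem~\ref{thm:incompressible} makes both edge maps injective, Lemma~\ref{lem:normalform} gives injectivity of each factor, and nontriviality comes from the surjection $\pi_1(M_i)\to\pi_1(\Sigma_{g_i})$. The connectivity and basepoint checks you add, and the alternative witness $\pi_1(\partial\nu F_1)\hookrightarrow\pi_1(M_1)$, are valid but the paper does not need them.
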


\begin{proof}
Let
\[
 M_i=(\Sigma_{g_i}\times S^2)\setminus\Int\nu F_i.
\]
Van Kampen identifies
\[
 \pi_1(Z)
 \cong
 \pi_1(M_1)
 *_{\pi_1(\partial\nu F_1)}
 \pi_1(M_2),
\]
where the second boundary map is composed with $\phi_*$.  By
Theorem~\ref{thm:incompressible}, both boundary homomorphisms are
injective.  Lemma~\ref{lem:normalform} therefore implies that
\[
 \pi_1(M_i)\hookrightarrow\pi_1(Z)
\]
for $i=1,2$.

Finally, $\pi_1(M_i)$ is nontrivial: the projection
$M_i\to\Sigma_{g_i}$ induces a surjection on fundamental groups, and
$\pi_1(\Sigma_{g_i})\neq1$ because $g_i\ge1$.  Hence $\pi_1(Z)\neq1$.
\end{proof}

\begin{corollary}\label{cor:mixednogo}
For the genus-seven surfaces
\[
 F_2\subset\Sigma_4\times S^2,\qquad
 F_3\subset\Sigma_3\times S^2
\]
of Corollary~\ref{cor:two}, no choice of gluing diffeomorphism
\[
 \phi:\partial\nu F_2\to\partial\nu F_3
\]
makes the mixed fiber sum $Z_\phi$ simply connected.
\end{corollary}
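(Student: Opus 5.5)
This is a direct specialization of Theorem~\ref{thm:nogo}, so the plan is to check that theorem's hypotheses for $F_2$ and $F_3$ and then invoke it.

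\textbf{Step 1: the surfaces are admissible.} By Corollary~\ref{cor:two} and Proposition~\ref{prop:cyclicsurface}, $F_2\subset\Sigma_4\times S^2$ is a connected unbranched multisection of degree $p_1=2$ over a base of genus $g_1=4\ge1$. Likewise $F_3\subset\Sigma_3\times S^2$ is a connected unbranched multisection of degree $p_2=3$ over a base of genus $g_2=3\ge1$. Unbranchedness holds because projection to the base is a genuine covering. Connectedness holds because $u_4$ and $u_3$ represent primitive classes.

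\textbf{Step 2: the gluing is admissible.} Both normal bundles are trivial, since $F_2^2=F_3^2=0$. Both surfaces have genus seven, so $\partial\nu F_2\cong\Sigma_7\times S^1\cong\partial\nu F_3$. Hence any gluing diffeomorphism $\phi$ is a boundary identification of the kind allowed in Theorem~\ref{thm:nogo}. Orientation reversal and framing choices play no role there, because the argument uses only van Kampen and Theorem~\ref{thm:incompressible}.

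\textbf{Step 3: apply Theorem~\ref{thm:nogo}.} It shows that $\pi_1(A)$ and $\pi_1(B)$ inject into $\pi_1(Z_\phi)$. As a concrete check, $\pi_1(A)$ surjects onto $\pi_1(\Sigma_4)\ne1$ (Corollary~\ref{cor:Agroup}), so $\pi_1(Z_\phi)\neq 1$.

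I expect no real obstacle. The only point needing care is confirming that the hypothesis ``boundary identification'' in Theorem~\ref{thm:nogo} imposes no restriction on $\phi$ beyond being a diffeomorphism of the boundary $3$-manifolds. This is so because Theorem~\ref{thm:incompressible} gives injectivity of $\pi_1(\partial\nu F_i)\to\pi_1(M_i)$ intrinsically, and composing with the isomorphism $\phi_*$ preserves injectivity. The amalgam normal form (Lemma~\ref{lem:normalform}) then applies for every $\phi$.
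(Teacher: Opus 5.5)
Your proposal is correct and matches the paper's proof, which consists of applying Theorem~\ref{thm:nogo} with $(g_1,p_1)=(4,2)$ and $(g_2,p_2)=(3,3)$. The paper leaves the hypothesis checks implicit; you spell them out: connectedness, unbranchedness, degrees at least two, trivial normal bundles and matching genus, and the fact that injectivity of the boundary maps is preserved under any $\phi$.
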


\begin{proof}
Apply Theorem~\ref{thm:nogo} with $(g_1,p_1)=(4,2)$ and
$(g_2,p_2)=(3,3)$.
\end{proof}

\begin{remark}
The genus-seven boundary has fourteen surface generators, exactly the sum
$8+6$ of the numbers of standard generators of
$\pi_1(\Sigma_4)$ and $\pi_1(\Sigma_3)$.  This numerical coincidence
suggests that a gluing map might identify the generators in a way that
kills both groups.  Theorem~\ref{thm:nogo} shows why this cannot happen:
the relations imposed by the gluing form an amalgamation along an
\emph{injective} boundary subgroup.  They identify subgroups of the two
complement groups; they do not quotient either factor by those subgroups.
\end{remark}

\begin{remark}
The abelianizations
\[
 H_1(A)\cong\mathbb Z^8\oplus\mathbb Z/2,\qquad
 H_1(B)\cong\mathbb Z^6\oplus\mathbb Z/3
\]
make the coprimality of $2$ and $3$ look promising.  However, the
meridians themselves have infinite order in $\pi_1(A)$ and $\pi_1(B)$.
The $\mathbb Z/2$ and $\mathbb Z/3$ occur only after abelianization.
Thus one cannot infer nonabelian relations $\mu^2=1$ and $\mu^3=1$ in the
fiber-sum group.
\end{remark}

\section{Comparison with the integral construction and further topology}
\label{sec:comparison-akhmedov}

The use of symplectic sums together with surgeries to control the topology
of small four-manifolds also appears in
\cite{AkhmedovParkSmall,AkhmedovSmall,AkhmedovParkOdd}.  Here we focus on
the closer comparison with the author's 2006 construction
\cite{AkhmedovCohomology} of minimal symplectic four-manifolds having the
integral cohomology ring of $S^2\times S^2$, obtained via knot surgery and
twisted fiber sums.  Both constructions use symplectic sums, but the
gluing surfaces play quite different roles in first homology.

Let $K$ be a genus-one fibered knot and let $M_K$ be the three-manifold
obtained by $0$-framed surgery on $K$.  The product $M_K\times S^1$ is a
symplectic torus bundle over a torus.  In \cite{AkhmedovCohomology}, a
twisted fiber sum of two copies is first formed by identifying a torus
fiber of one copy with a torus section of the other.  The resulting
symplectic manifold $Y_K$ contains a square-zero genus-two symplectic
surface
\[
 \Sigma_2=T_1\#T_2.
\]
Two copies of $Y_K$ are then summed along $\Sigma_2$.

The decisive point is the second gluing.  In the notation of
\cite{AkhmedovCohomology}, the first homology of the genus-two gluing
surface is generated by
\[
 m,\ x,\ \gamma_1,\ \gamma_2,
\]
and the gluing interchanges the two types of generators:
\[
 m'\longmapsto\gamma_1,\qquad
 \gamma_1'\longmapsto m,\qquad
 x'\longmapsto\gamma_2,\qquad
 \gamma_2'\longmapsto x.
\]
The classes $m,x$ are homologically essential in the relevant building
block, whereas $\gamma_1,\gamma_2$ are homologically trivial there.  The
Mayer--Vietoris map therefore kills the first homology of the final sum,
and the resulting manifold $X_K$ satisfies
\[
 H_1(X_K;\mathbb Z)=0,\qquad
 H_2(X_K;\mathbb Z)\cong\mathbb Z^2,\qquad
 Q_{X_K}\cong
 H=\begin{pmatrix}0&1\\1&0\end{pmatrix}.
\]
Thus $X_K$ has the integral cohomology ring of $S^2\times S^2$; it also
has $e=4$, $\sigma=0$, and is minimal symplectic.

The cyclic multisection construction has a different integral feature.
For
\[
 F_{g,p}\longrightarrow\Sigma_g,
\]
Lemma~\ref{lem:image-cover} gives
\[
 H_1(\Sigma_g;\mathbb Z)/
 q_*H_1(F_{g,p};\mathbb Z)\cong\mathbb Z/p.
\]
Hence the boundary image never contains all of the integral base
homology.  In a mixed $(p,q)$ sum, the corresponding defects survive as
quotients $\mathbb Z/p$ and $\mathbb Z/q$ of the final first homology.
No mapping class of the common gluing surface removes this defect,
because it is already detected after projection to the two base homology
groups.  This is precisely why the direct construction naturally gives
rational, rather than integral, cohomology $S^2\times S^2$'s.

The comparison also shows where a modification would have to occur.
Changing only the boundary diffeomorphism cannot remove the integral
quotients.  One must first alter the complements so that suitable boundary
surface generators become null-homologous, as the $\gamma_i$ do in
\cite{AkhmedovCohomology}.  Luttinger surgeries on Lagrangian tori in the
complements are natural candidates: they can change $H_1$ and $\pi_1$
without changing Euler characteristic or signature.  If one also seeks a
simply connected manifold, the resulting nonabelian fundamental group
must of course be computed separately.

\begin{remark}[Twisted bundle case]\label{rem:twisted-ruled}
The same idea can be considered in the nontrivial oriented $S^2$-bundle
over a surface.  Write $S$ for a section of odd self-intersection and
$F$ for the fiber.  Then
\[
 S\cdot F=1,\qquad F^2=0,\qquad S^2\equiv1\pmod2.
\]
A degree-$p$ multisection in the class $pS+kF$ has square
\[
 p^2S^2+2pk.
\]
Consequently a square-zero class of this form can occur only when $p$ is
even, the first essential difference from the product case.

There is a more concrete odd-form counterpart of the geography above.
Normalize the section so that $S^2=1$.  A degree-$p$ multisection in the
class
\[
 A=pS+kF
\]
has square $A^2=p^2+2pk$.  Thus a square-zero class of this form can occur
only when $p$ is even, in which case
\[
 A_{g,p}=pS-\frac p2F.
\]
For the nontrivial ruled surface over $\Sigma_g$ the canonical class in
this basis is
\[
 K=-2S+(2g-1)F.
\]
Hence
\[
 K\cdot A_{g,p}=2p(g-1),
\]
and adjunction gives
\[
 g(A_{g,p})=p(g-1)+1.
\]
The genus formula is therefore the same as for the unbranched cyclic
multisections in the product case, although the parity restriction is
different.

Among the three degree pairs in Theorem~\ref{thm:classification}, only
$(2,4)$ has both degrees even.  This singles out the following candidate.
Let $E_3\to\Sigma_3$ and $E_2\to\Sigma_2$ be the nontrivial oriented
$S^2$-bundles, with sections $S_3,S_2$ of square $+1$ and sphere fibers
$F_3,F_2$.  The square-zero classes
\[
 A_{3,2}=2S_3-F_3,\qquad A_{2,4}=4S_2-2F_2
\]
both have genus $5$, and Proposition~\ref{prop:twistedexistence} gives
explicit connected unbranched symplectic representatives.  We may
therefore form the mixed symplectic sum
\[
 (E_3\setminus\Int\nu A_{3,2})
 \mathop{\#}_{\phi}
 (E_2\setminus\Int\nu A_{2,4}).
\]
The Euler characteristic and signature calculation is the same as in the
product $(2,4)$ case and gives
\[
 e=4,\qquad \sigma=0.
\]

Smith observed that the braid construction of symplectic multisections also
admits a twisted-sphere-bundle version \cite{Smith}.  For the two classes
needed here, one can give a direct finite-holonomy model.

\begin{proposition}\label{prop:twistedexistence}
Let $E_g\to\Sigma_g$ denote the nontrivial oriented $S^2$-bundle.

\begin{enumerate}
\item The bundle $E_3$ contains a connected unbranched symplectic
degree-$2$ multisection in the class
\[
        2S_3-F_3.
\]
\item The bundle $E_2$ contains a connected unbranched symplectic
degree-$4$ multisection in the class
\[
        4S_2-2F_2.
\]
\end{enumerate}
Both surfaces have self-intersection zero and genus $5$.
\end{proposition}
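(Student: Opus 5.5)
The plan is to realize both bundles $E_3$ and $E_2$ as \emph{flat} $S^2$-bundles with finite holonomy, and to take the multisections to be unions of horizontal leaves through a finite holonomy orbit. Let $V=\{1,R_x,R_y,R_z\}\subset SO(3)$ be the Klein four-group of rotations by $\pi$ about the coordinate axes. Define $\rho:\pi_1(\Sigma_g)\to V$ on standard generators by
\[
 \rho(a_1)=R_x,\qquad \rho(b_1)=R_y,\qquad \rho(a_i)=\rho(b_i)=1\ (i\ge2).
\]
This is a homomorphism because $V$ is abelian. Set $E=\widetilde\Sigma_g\times_\rho S^2$.

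\textbf{Step 1: $E$ is the nontrivial bundle.} The second Stiefel--Whitney class of a flat $SO(3)$-bundle is the obstruction to lifting $\rho$ to $SU(2)$. The lifts of $R_x,R_y$ are the unit quaternions $\pm i,\pm j$, which anticommute, so any lift sends $\prod[a_i,b_i]$ to $[i,j]=-1$. Hence $w_2\neq0$ and $E\cong E_g$.

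\textbf{Step 2: symplectic form and multisections.} The round area form $\omega_{S^2}$ is $SO(3)$-invariant, so $\lambda\,\pi^*\omega_{\Sigma}+\omega_{S^2}$ descends to a closed form on $E$. It is nondegenerate because the horizontal (flat) distribution is $\omega$-orthogonal to the fibers. For a finite $V$-invariant set $O\subset S^2$, the image of $\widetilde\Sigma_g\times O$ is a closed embedded surface that is a union of horizontal leaves. It is therefore an unbranched $|O|$-fold covering of $\Sigma_g$, exactly as in Proposition~\ref{prop:cyclicsurface}. It is symplectic, since $\omega$ restricts to $\lambda\,q^*\omega_\Sigma$, and it is connected when $\pi_1(\Sigma_g)$ acts transitively on $O$. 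Its normal bundle is the restriction of the flat vertical tangent bundle to a horizontal leaf. The flat connection gives a nonvanishing section (push the leaf off to the nearby invariant-translate along the flat structure; more simply, a nearby point of $S^2$ gives a disjoint parallel leaf after passing to the leaf's own cover). So the self-intersection is $0$. If I am unsure about the push-off, I will instead compute the square from the Euler class of the flat normal bundle, which is zero rationally.

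\textbf{Step 3: the two cases.} For (1), take $g=3$ and $O=\{(0,0,\pm1)\}$. Both $R_x$ and $R_y$ swap these two points, so the orbit is a single $V$-orbit and the cover is connected of degree $2$, with genus $2(3-1)+1=5$. For (2), take $g=2$ and $O=\{(\pm1,\pm1,0)/\sqrt2\}$, on which $V$ acts simply transitively. This gives a connected $4$-fold cover of genus $4(2-1)+1=5$. In each case the class is $pS+kF$ with $p=2$ or $4$ (from the intersection with a fiber) and square $0$. The computation in Remark~\ref{rem:twisted-ruled} then forces $k=-p/2$, giving $2S_3-F_3$ and $4S_2-2F_2$. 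As a cross-check on the genus, adjunction gives $g=p(g-1)+1$, in agreement with the covering count.

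\textbf{Main obstacle.} The delicate point is Step 1, namely certifying that a flat bundle with this finite holonomy really is the \emph{nontrivial} bundle, together with the normalization $S^2=1$ used to pin down $k$. I will handle it through the $SU(2)$-lifting criterion for $w_2$ as above. As a sanity check, the parity restriction itself requires $p$ even, and the trivial holonomy $\rho\equiv1$ would give $\Sigma_g\times S^2$, where $\rho$ cannot be transitive on $O$.
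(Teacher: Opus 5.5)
Your proposal is correct and follows essentially the same route as the paper: flat $SO(3)$-bundles with finite holonomy, the $SU(2)$-lifting criterion for $w_2$, and a finite invariant orbit as the multisection. From there you use the descended product symplectic form, the vanishing of the torsion Euler class of the normal bundle, and $p^2+2pk=0$ to get $k=-p/2$. Your only deviation is using the Klein four-group for both genera with the free orbit $\{(\pm1,\pm1,0)/\sqrt2\}$ in case (2), where the paper instead adds the holonomy $R_z(\pi/2)$ on $a_2$ to act transitively on the points $(\pm1,0,0),(0,\pm1,0)$.

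Note that in case (1) there is no flat nonvanishing normal section, because the stabilizer element $R_z=R_xR_y$ acts by $-1$ on $T_NS^2$. So it is your Euler-class fallback (exactly the paper's argument), or alternatively the disjoint nearby leaf representing $2[\Sigma']$, that should carry the self-intersection claim.
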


\begin{proof}
We use flat $S^2$-bundles with holonomy in $\operatorname{SO}(3)$.  Write
$R_x(\theta)$ and $R_z(\theta)$ for rotations through angle $\theta$
about the $x$- and $z$-axes.  The obstruction to lifting an
$\operatorname{SO}(3)$ representation of a surface group to
$\operatorname{SU}(2)$ is its second Stiefel--Whitney class.

For the degree-$2$ case, choose generators
\[
 \pi_1(\Sigma_3)=
 \langle a_1,b_1,a_2,b_2,a_3,b_3\mid
 [a_1,b_1][a_2,b_2][a_3,b_3]=1\rangle
\]
and define
\[
 \rho_2(a_1)=R_x(\pi),\qquad
 \rho_2(b_1)=R_z(\pi),
\]
with all remaining generators mapped to the identity.  The two rotations
commute in $\operatorname{SO}(3)$, so this is a representation.  If they
are lifted to the unit quaternions, their commutator is $-1$; hence the
associated $S^2$-bundle has nonzero $w_2$ and is therefore the nontrivial
bundle $E_3$.

Let $P_2=\{N,S\}\subset S^2$ be the north and south poles.  The holonomy
preserves $P_2$: $R_z(\pi)$ fixes both points and $R_x(\pi)$ interchanges
them.  Thus
\[
 \widetilde\Sigma_3\times_{\rho_2}P_2
 \subset
 \widetilde\Sigma_3\times_{\rho_2}S^2=E_3
\]
is a connected unbranched degree-$2$ multisection.

For the degree-$4$ case, take
\[
 \pi_1(\Sigma_2)=
 \langle a_1,b_1,a_2,b_2\mid[a_1,b_1][a_2,b_2]=1\rangle
\]
and set
\[
 \rho_4(a_1)=R_x(\pi),\qquad
 \rho_4(b_1)=R_z(\pi),\qquad
 \rho_4(a_2)=R_z(\pi/2),\qquad
 \rho_4(b_2)=1.
\]
Again the surface relation is satisfied.  The first pair of holonomies
has lifts whose commutator is $-1$, while the second commutator is $1$,
so the associated bundle has $w_2\ne0$ and is $E_2$.

Let $P_4$ be the four equally spaced points on the equator.  The image of
$\rho_4$ preserves $P_4$, and $R_z(\pi/2)$ acts transitively on it.
Therefore
\[
 \widetilde\Sigma_2\times_{\rho_4}P_4
 \subset E_2
\]
is a connected unbranched degree-$4$ multisection.

The rotations preserve the standard area form on $S^2$.  Hence on each
flat associated bundle the form
\[
 \omega_\lambda=\lambda\,\pi^*\omega_{\Sigma_g}+\omega_{S^2},
 \qquad \lambda>0,
\]
descends from $\widetilde\Sigma_g\times S^2$ and is symplectic.  On the
multisections above the vertical term vanishes, so the restriction is
$\lambda\,q^*\omega_{\Sigma_g}>0$.  Thus both multisections are
symplectic.

Their normal bundles have finite holonomy, so their Euler classes are
torsion.  Since the second integral cohomology of each multisection is
torsion-free, the Euler classes vanish; in particular both surfaces have
self-intersection zero.  If $S_g^2=1$ and
$F_g$ is the sphere fiber, a degree-$p$ class has the form
$pS_g+kF_g$.  The square-zero condition gives
\[
 p^2+2pk=0,
\]
and therefore $k=-p/2$.  The two classes are consequently
\[
 2S_3-F_3,\qquad 4S_2-2F_2.
\]
Finally, since the coverings are unbranched,
\[
 g= p(g(\Sigma)-1)+1,
\]
which gives genus $5$ in both cases.
\end{proof}

\begin{proposition}\label{prop:twisted24homology}
Let $E_g\to\Sigma_g$ be the nontrivial oriented $S^2$-bundle, with a
section $S$ satisfying $S^2=1$ and fiber $F$.  Let $A_{g,p}$ be a
connected unbranched degree-$p$ square-zero multisection in the class
\[
 [A_{g,p}]=pS-\frac p2F,
 \qquad p\ {\rm even},
\]
and let
\[
 M_{g,p}^{\rm tw}=E_g\setminus\Int\nu A_{g,p}.
\]
Then
\[
 H_1(M_{g,p}^{\rm tw};\mathbb Z)
 \cong \mathbb Z^{2g}\oplus\mathbb Z/(p/2).
\]
In particular,
\[
 H_1(M_{3,2}^{\rm tw};\mathbb Z)\cong\mathbb Z^6,
 \qquad
 H_1(M_{2,4}^{\rm tw};\mathbb Z)\cong
 \mathbb Z^4\oplus\mathbb Z/2.
\]

For the twisted $(2,4)$ mixed sum, there are boundary identifications for
which
\[
 H_1(Z_{\phi}^{\rm tw};\mathbb Q)=0.
\]
For every such gluing,
\[
 b_1(Z_{\phi}^{\rm tw})=0,\qquad
 b_2(Z_{\phi}^{\rm tw})=2,\qquad
 \sigma(Z_{\phi}^{\rm tw})=0,
\]
and hence
\[
 H^*(Z_{\phi}^{\rm tw};\mathbb Q)
 \cong
 H^*(\mathbb CP^2\#\overline{\mathbb CP}^{\,2};\mathbb Q)
\]
as graded rings.
\end{proposition}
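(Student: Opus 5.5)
The plan is to treat the three assertions separately: the integral homology of the complement, the rational gluing, and the identification of the rational cohomology ring, with a non-spin remark added at the end.

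\emph{Complement homology.} I would not go through a presentation of $\pi_1$ here, since the holonomy model is not cyclic. Instead I would use the exact sequence of the pair $(E_g,M^{\rm tw}_{g,p})$ together with excision and the Thom isomorphism $H_k(E_g,M^{\rm tw}_{g,p})\cong H_{k-2}(A_{g,p})$. Since $H_1(E_g,M^{\rm tw}_{g,p})\cong H_{-1}(A_{g,p})=0$, this gives
\[
 H_2(E_g)\xrightarrow{\ \cdot[A_{g,p}]\ }\mathbb Z\longrightarrow H_1(M^{\rm tw}_{g,p})\longrightarrow H_1(E_g)\longrightarrow0 .
\]
The first map is intersection with $A_{g,p}$. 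Here $H_1(E_g)\cong\mathbb Z^{2g}$ because the fiber is simply connected, and $H_2(E_g)$ is generated by $S$ and $F$. We have $S\cdot A_{g,p}=p-\tfrac p2=\tfrac p2$ and $F\cdot A_{g,p}=p$, so the image is $\tfrac p2\mathbb Z$. Because $H_1(E_g)$ is free, the sequence splits, and this gives $\mathbb Z^{2g}\oplus\mathbb Z/(p/2)$. The meridian generates the torsion summand. The two special cases follow at once.

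\emph{Rational gluing.} The projection $M^{\rm tw}_{g,p}\to\Sigma_g$ is still a bundle with fiber $P_p$, because the multisection is unbranched. The proof of Lemma~\ref{lem:boundaryH1} therefore carries over verbatim. Over $\mathbb Q$, the image of the boundary surface classes in $H_1(M^{\rm tw};\mathbb Q)\cong H_1(\Sigma_g;\mathbb Q)$ is $q_*H_1(A_{g,p};\mathbb Q)$, and the meridian dies rationally. Hence the rational Mayer--Vietoris map is again $x\mapsto(Q_2x,-Q_4Tx)$ from $H_1(\Sigma_5;\mathbb Q)$ to $H_1(\Sigma_3;\mathbb Q)\oplus H_1(\Sigma_2;\mathbb Q)$, where $Q_2,Q_4$ are the covering push-forwards. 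Lemma~\ref{lem:symplectic-kernel} applies to any finite covering, so the kernels are symplectic of dimensions $4$ and $6$. Since $G=5=g+h$, the argument of Theorem~\ref{thm:general-rational} goes through unchanged: Lemma~\ref{lem:symp-general-position}, Borel density, and the surjection $\operatorname{Mod}(\Sigma_5)\to\operatorname{Sp}(10,\mathbb Z)$ produce a gluing with $H_1(Z^{\rm tw}_\phi;\mathbb Q)=0$. The gluing is symplectic by Moser and Gompf, as before.

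\emph{Cohomology ring.} With $b_1=b_3=0$, $e=4$ and $\sigma=0$, we get $b_2=2$ and $b_2^\pm=1$. As in the proof of Theorem~\ref{thm:Qcohomology}, the unimodular form on $H_2/\mathrm{Tor}$ is hyperbolic over $\mathbb Q$. This identifies the rational ring with that of $\mathbb CP^2\#\overline{\mathbb CP}^{\,2}$, which is rationally the same as that of $S^2\times S^2$.

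\emph{Non-spin claim.} I would argue that $w_2(Z)$ restricts to $w_2(E_2)|_{M^{\rm tw}_{2,4}}$, and that this restriction is nonzero. Since $K\equiv F\pmod 2$, the class $w_2(E_2)$ is the mod-$2$ reduction of $\mathrm{PD}(F)$. The kernel of $H^2(E_2;\mathbb Z/2)\to H^2(M^{\rm tw}_{2,4};\mathbb Z/2)$ is the image of $H^2(E_2,M;\mathbb Z/2)\cong H^0(A;\mathbb Z/2)$, which is spanned by $\mathrm{PD}[A_{2,4}]\bmod2=4S-2F\equiv0$. So $F\bmod 2$ survives, and $Z$ is not spin.

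I expect this last step to be the delicate one. The analogous argument on the $E_3$ side fails, because $2S-F\equiv F$, so one must use the degree-$4$ side. Moreover, in the presence of $2$-torsion in $H_1$, non-spin does not by itself force an odd form on $H_2/\mathrm{Tor}$. That finer parity question is not needed for the rational statement, so I would not pursue it.
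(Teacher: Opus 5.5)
Your proposal is correct and takes essentially the same route as the paper. Both arguments use the exact sequence of the pair $(E_g,M^{\rm tw}_{g,p})$ with excision and the Thom isomorphism, where intersection with $pS-\frac p2F$ has image $\frac p2\mathbb Z$; then the rational Mayer--Vietoris argument, with symplectic kernels of dimensions $4$ and $6$ put in general position by an integral symplectic $T$; and finally the observation that both unimodular rank-two, signature-zero forms are rationally hyperbolic. Your non-spin argument via $A_{2,4}\equiv 0 \pmod 2$ also matches the discussion the paper gives immediately after the proposition, though it is not part of the statement itself.
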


\begin{proof}
The homology calculation is most transparent from the pair
$(E_g,M_{g,p}^{\rm tw})$.  By excision and the Thom isomorphism,
\[
 H_2(E_g,M_{g,p}^{\rm tw};\mathbb Z)
 \cong H_0(A_{g,p};\mathbb Z)\cong\mathbb Z,
 \qquad
 H_1(E_g,M_{g,p}^{\rm tw};\mathbb Z)=0.
\]
The relevant part of the long exact sequence is therefore
\[
 H_2(E_g;\mathbb Z)\longrightarrow\mathbb Z
 \longrightarrow H_1(M_{g,p}^{\rm tw};\mathbb Z)
 \longrightarrow H_1(E_g;\mathbb Z)\longrightarrow0.
\]
The first map is intersection with $[A_{g,p}]$.  Since
$H_2(E_g;\mathbb Z)$ is generated by $S$ and $F$,
\[
 F\cdot A_{g,p}=p,\qquad
 S\cdot A_{g,p}=\frac p2.
\]
Its image is therefore $(p/2)\mathbb Z$.  Since
$H_1(E_g;\mathbb Z)\cong H_1(\Sigma_g;\mathbb Z)$ is free, the resulting
short exact sequence splits as a sequence of abelian groups, giving
\[
 H_1(M_{g,p}^{\rm tw};\mathbb Z)
 \cong H_1(\Sigma_g;\mathbb Z)\oplus\mathbb Z/(p/2).
\]

For the rational gluing statement, the torsion terms disappear.  Choose
the tubular neighborhoods fiberwise.  The projection of the complement to
the base then has punctured-sphere fibers, and the projection of the
boundary surface factor is precisely the covering map
$A_{g,p}\to\Sigma_g$.  Since the meridian maps to torsion in
$H_1(M_{g,p}^{\rm tw};\mathbb Z)$, it vanishes over $\mathbb Q$.
Thus over $\mathbb Q$ the relevant boundary map is governed by the
covering push-forward.  In the $(2,4)$ case we have
\[
 H_1(A_{3,2};\mathbb Q)\cong H_1(\Sigma_5;\mathbb Q)
 \cong H_1(A_{2,4};\mathbb Q),
\]
and the two covering maps give surjections
\[
 Q_2:H_1(\Sigma_5;\mathbb Q)\longrightarrow H_1(\Sigma_3;\mathbb Q),
 \qquad
 Q_4:H_1(\Sigma_5;\mathbb Q)\longrightarrow H_1(\Sigma_2;\mathbb Q).
\]
Their kernels have dimensions $4$ and $6$, respectively.  As in the
product case, these kernels are symplectic subspaces.  Since
\[
 4+6=10=\dim H_1(\Sigma_5;\mathbb Q),
\]
we may choose $T\in\operatorname{Sp}(10,\mathbb Z)$ so that
\[
 T(\ker Q_2)\cap\ker Q_4=0.
\]
Realizing $T$ by a mapping class of $\Sigma_5$, the rational
Mayer--Vietoris map
\[
 x\longmapsto (Q_2x,-Q_4Tx)
\]
is an isomorphism.  Hence
$H_1(Z_{\phi}^{\rm tw};\mathbb Q)=0$.

The Euler characteristic and signature are unchanged from the product
$(2,4)$ case:
\[
 e(Z_{\phi}^{\rm tw})=4,\qquad
 \sigma(Z_{\phi}^{\rm tw})=0.
\]
Thus $b_2=2$ and $b_2^+=b_2^-=1$.  The intersection pairing on
$H_2(Z_{\phi}^{\rm tw};\mathbb Z)/\operatorname{Tor}$ is an integral
unimodular form of rank two and signature zero.  Hence it is either the
even hyperbolic form $H$ or the odd form
$\langle1\rangle\oplus\langle-1\rangle$.  These two forms become
isomorphic over $\mathbb Q$.  It follows that
\[
 H^*(Z_{\phi}^{\rm tw};\mathbb Q)
 \cong
 H^*(\mathbb CP^2\#\overline{\mathbb CP}^{\,2};\mathbb Q)
\]
as graded rings.

After rescaling the symplectic forms, the two genus-five multisections may
be assumed to have the same symplectic area.  The chosen gluing mapping
class may be represented, after isotopy, by an area-preserving
diffeomorphism.  Gompf's symplectic sum construction therefore equips
$Z_{\phi}^{\rm tw}$ with a symplectic form.
\end{proof}

For a gluing as in Proposition~\ref{prop:twisted24homology} one has
$b_1=0$ and $b_2=2$.  There is also a
useful mod-$2$ distinction between the two complements.  For the
nontrivial ruled surface,
\[
 w_2(E_g)=\operatorname{PD}(F)\pmod2.
\]
On the degree-$2$ side,
\[
 A_{3,2}=2S_3-F_3\equiv F_3\pmod2,
\]
so $\operatorname{PD}(A_{3,2})=w_2(E_3)$.  Thus removal of
$A_{3,2}$ may remove the spin obstruction.  On the degree-$4$ side,
however,
\[
 A_{2,4}=4S_2-2F_2\equiv0\pmod2.
\]
Let
\[
 M_4=E_2\setminus\Int\nu A_{2,4}.
\]
By the cohomology sequence of the pair $(E_2,M_4)$ and the Thom
isomorphism, the image of
\[
 H^2(E_2,M_4;\mathbb Z/2)\longrightarrow H^2(E_2;\mathbb Z/2)
\]
is generated by $\operatorname{PD}(A_{2,4})$.  Thus
\[
 \ker\bigl(H^2(E_2;\mathbb Z/2)\longrightarrow
 H^2(M_4;\mathbb Z/2)\bigr)
 =\langle\operatorname{PD}(A_{2,4})\rangle.
\]
Since $A_{2,4}\equiv0\pmod2$, this kernel is zero, and therefore the
restriction map is injective.  Since
$w_2(E_2)=\operatorname{PD}(F_2)\ne0$, it follows that
\[
 w_2(M_4)=w_2(E_2)|_{M_4}\ne0.
\]
In particular, the degree-$4$ complement is non-spin.  Consequently
every mixed sum obtained by gluing this complement to the degree-$2$
complement is non-spin: a spin structure on the closed sum would restrict
to a spin structure on $M_4$.

This proves non-spinness independently of the gluing map.  One should
distinguish this from oddness of the free integral intersection form when
$H_1$ has $2$-torsion.  If the resulting $w_2$ evaluates nontrivially on
the free part of $H_2$---in particular, if $H_1=0$---then the rank-two,
signature-zero free intersection form is odd and therefore
\[
 \langle1\rangle\oplus\langle-1\rangle,
\]
the form of $\mathbb CP^2\#\overline{\mathbb CP}^{\,2}$.  Thus the
twisted $(2,4)$ construction gives a concrete route toward symplectic
four-manifolds having the rational cohomology ring of
$\mathbb CP^2\#\overline{\mathbb CP}^{\,2}$, while the integral parity
question reduces to an explicit calculation of the surviving
Stiefel--Whitney class and the torsion in first homology.

This completes the existence and rational-homology part of the twisted
$(2,4)$ construction.  We do not pursue the full integral classification
here.  The precise finite groups
arising from the integral Mayer--Vietoris map, the isotopy theory of the
multisections, and the parity of the free integral intersection form will
be studied in follow-up work in \cite{AkhmedovTorus}.  The non-spin calculation above is
independent of those remaining integral questions.
\end{remark}

\section{Some questions}

Theorem~\ref{thm:infinitefamilies} answers the questions of whether the
direct construction yields infinitely many fundamental groups and
infinitely many diffeomorphism types.  We would like to address two questions in further work in \cite{AkhmedovTorus}.

\begin{question}
For each of the three cases in Theorem~\ref{thm:classification}, which
finite abelian groups occur as $H_1(Z_\phi;\mathbb Z)$?  For
product-framed gluings Proposition~\ref{prop:productH1} shows that the
order is divisible by
\[
 pq=6,\quad 8,\quad 9
\]
in the $(2,3)$, $(2,4)$, and $(3,3)$ cases, respectively.  Are these
lower bounds sharp?  In particular, can one realize $\mathbb Z/6$ in the
$(2,3)$ case? 

\end{question}

\begin{question}
More generally, for which integers $p,q\geq 2$ and genera $g,h$ does a
mixed sum have the rational cohomology ring of
\[
k(S^2\times S^2)
\]
for some odd $k\geq 3$? Classify all possible quadruples
$(p,q,g,h)$ for each such odd $k \leq 15$?
\end{question}

\begin{question}
Can Luttinger surgeries on Lagrangian tori disjoint from the
multisections alter the boundary maps so that the residual integral
quotients disappear and the final amalgam becomes simply connected?
\end{question}

At the integral level, changing the gluing map alone cannot remove the
residual first homology.  One possibility is to modify the complements
before taking the sum.
The construction in \cite{AkhmedovCohomology} suggests one way this might
happen: the relevant generators there are killed before the final gluing.
Whether suitable Luttinger/torus surgeries in the present complements produce
analogous relations remains open.
\section*{Acknowledgments}
We are grateful to B.~Doug Park for many valuable discussions.  Some of the ideas underlying this work
go back to discussions with him around 2010, and we thank him for his
insights and encouragement.  We also thank Ivan Smith for helpful
correspondence dating back to 2007, when he kindly answered questions
concerning his construction of braided symplectic surfaces in ruled
surfaces. We are also grateful to several colleagues for their encouragement over the years. An LLM-based tool was used to assist with the preparation of portions of the text, including grammar and language editing, and with the generation of the figures. All mathematical content is due to the author, who takes full intellectual responsibility for the content of this paper.

\end{document}